\newcommand*\patchAmsMathEnvironmentForLineno[1]{%
\expandafter\let\csname old#1\expandafter\endcsname\csname #1\endcsname
\expandafter\let\csname oldend#1\expandafter\endcsname\csname end#1\endcsname
\renewenvironment{#1}%
{\linenomath\csname old#1\endcsname}%
{\csname oldend#1\endcsname\endlinenomath}}%
\newcommand*\patchBothAmsMathEnvironmentsForLineno[1]{%
\patchAmsMathEnvironmentForLineno{#1}%
\patchAmsMathEnvironmentForLineno{#1*}}%
\newenvironment{ma}{\begin{array}{>{\displaystyle}r >{\displaystyle}c >{\displaystyle}l}}{\end{array}}%
\def\th@plain{%
\thm@headfont{\bfseries}
\thm@notefont{}
  \itshape 
}
\newtheorem{theorem}{Theorem}%
\newtheorem{lemma}{Lemma}[section]%
\newtheorem{proposition}[lemma]{Proposition}%
\newtheorem{corollary}[lemma]{Corollary}%
\newcounter{mythmct}
\def\bfseries{\fontseries \bfdefault \selectfont \boldmath}
\def\@fnsymbol#1{\ensuremath{\ifcase#1\or *\or **\or {**}* \or {**}{**}\else\@ctrerr\fi}}
\newcommand{\aleq}{\prec}
\renewcommand{\prec}{\lesssim}
\newcommand{\ageq}{\succ}
\renewcommand{\succ}{\gtrsim}
\newcommand{\aeq}{\approx}
\renewcommand{\Upsilon}{Y}
\newcommand{\N}{{\mathbb N}}
\newcommand{\R}{{\mathbb R}}
\newcommand{\Z}{{\mathbb Z}}
\renewcommand{\S}{{\mathbb S}}
\newcommand{\fracm}[1]{{\frac{1}{#1}}}
\newcommand{\supp}{\operatorname{supp}}%
\newcommand{\dist}{\operatorname{dist}}%
\newcommand{\lap}{\Delta}
\newcommand{\Dsymbol}{\abs{D}}
\newcommand{\Rieszsymbol}{I}
\newcommand{\laps}[1]{\Dsymbol^{#1}}
\newcommand{\lapms}[1]{\Rieszsymbol_{#1}}
\newcommand{\lapv}{\Dsymbol^{\frac{1}{2}}}
\newcommand{\lapmv}{\Rieszsymbol_{\frac{1}{2}}}
\newcommand{\lapsv}[1]{\Dsymbol^{\frac{#1}{2}}}
\newcommand{\lapmsv}[1]{\Rieszsymbol_{\frac{#1}{2}}}
\newcommand{\abs}[1]{{\left \vert #1 \right \vert}}%
\newcommand{\sabs}[1]{{\vert #1 \vert}}%
\newcommand{\brac}[1]{{\left ( #1 \right )}}%
\newcommand{\sbrac}[1]{{\langle #1 \rangle}}%
\newcommand{\Sbrac}[1]{{\left \langle #1 \right \rangle}}%
\newcommand{\Vrac}[1]{{\left \Vert #1 \right \Vert }}%
\newcommand{\vrac}[1]{{ \Vert #1 \Vert }}%
\newcommand{\ontop}[2]{{\genfrac{}{}{0pt}{}{#1}{#2}}}
\newcommand{\intl}{\int \limits}
\def\XXint#1#2#3{{\setbox0=\hbox{$#1{#2#3}{\int}$}%
     \vcenter{\hbox{$#2#3$}}\kern-.5\wd0}}%
\newcommand{\eps}[1][]{\varepsilon_{#1}}
\newcommand{\br}[1]{\ensuremath{\left(#1\right)}}
\renewcommand{\d}{\ensuremath{\,\mathrm{d}}}
\newcommand{\norm}[1]{\ensuremath{\left\|#1\right\|}}
\newcommand{\seminorm}[1]{\ensuremath{\left[#1\right]}}
\newcommand{\seq}[1]{\ensuremath{\br{#1}_{\eps>0}}}
\newcommand{\set}[1]{\ensuremath{\left\{#1\right\}}}
\newcommand{\sett}[2]{\ensuremath{\left\{#1\left|\,#2\right.\right\}}}
\renewcommand{\sp}[1]{\ensuremath{\left\langle#1\right\rangle}}
\numberwithin{equation}{section}
\newcommand{\capH}{\ensuremath{\cap H^{\scriptstyle1,\infty}}}
\newcommand{\D}[1]{{#1}(u+w)-{#1}(u)}
\newcommand{\dg}{{\g}'}
\newcommand{\dgn}{{\g_0'}}
\newcommand{\dgt}{\gt'}
\renewcommand{\dh}{{\varphi'}}
\newcommand{\dtg}{{\tilde{\g}}'}
\newcommand{\E}[1][]{\ensuremath{E_{#1}^{(2)}}}
\newcommand{\Ee}[1][\eps]{\ensuremath{E_{#1}^{(2)}}}
\renewcommand{\eps}{\ensuremath{\varepsilon}}
\newcommand{\g}{\gamma}
\newcommand{\gt}[1][\tau]{\g_{#1}}
\renewcommand{\G}[1][\g]{\ensuremath{\mathcal G_{#1}}}
\newcommand{\h}{\ensuremath{\varphi}}
\DeclareMathOperator{\id}{id}
\newcommand{\length}{\ensuremath{\mathscr L}}
\DeclareMathOperator{\lip}{lip}
\renewcommand{\N}{\ensuremath{\mathbb{N}}}
\renewcommand{\R}{\ensuremath{\mathbb{R}}}
\newcommand{\refeq}[2][=]{\ensuremath{\stackrel{\text{\makebox[0cm][c]{\eqref{eq:#2}}}}{#1}}}
\renewcommand{\rho}{\ensuremath{\varrho}}
\newcommand{\rzd}{\ensuremath{(\R/\Z,\R^n)}}
\newcommand{\s}{\ensuremath{\sigma}}
\newcommand{\stackrl}[2][=]{\ensuremath{\stackrel{\text{\makebox[0cm][c]{#2}}}{#1}}}
\newcommand{\tdg}{{\tilde\gamma'}}
\newcommand{\tg}{\tilde{\g}}
\newcommand{\W}[1][3/2]{\ensuremath{H^{\scriptstyle #1}}}
\newcommand{\Wir}[1][3/2]{\ensuremath{H_{\mathrm{ir}}^{#1}}}
\newcommand{\Hir}[1][3/2]{\ensuremath{H_{\mathrm{ir}}^{#1}}}
\numberwithin{equation}{section}%
\title{Hard analysis meets critical knots \\ \Large Stationary points of the M\"obius energy are smooth}
\author{S. Blatt\thanks{Simon Blatt, Mathematics Institute, Zeeman Building, University of Warwick, Coventry CV4 7AL, United Kingdom, \url{S.Blatt@warwick.ac.uk}}
\and$\Phi$. Reiter\thanks{Philipp Reiter, Abteilung f\"ur Angewandte Mathematik, Universit\"at Freiburg, Hermann-Herder-Stra\ss e 10, 79104 Freiburg i.~Br., Germany, \url{reiter@mathematik.uni-freiburg.de}}
\and A. Schikorra\thanks{Armin Schikorra, Max-Planck Institute for Mathematics in the Sciences, Inselstr. 22, 04103 Leipzig, Germany, \url{armin.schikorra@mis.mpg.de}}}
\date{\today}
\begin{document}
\maketitle


\begin{abstract}
 We prove that if a curve  $\g \in H^{3/2}(\mathbb R / \mathbb Z, \mathbb R^n)$
 parametrized by arc length is
 a stationary point of the M\"obius energy introduced by Jun O'Hara in \cite{OH91}, then $\g$ is 
 smooth.
 Our methods only rely on purely analytical arguments, entirely without using
 M\"obius invariance. Furthermore, they are not fundamentally
 restricted to one-dimensional domains, but are generalizable to arbitrary dimensions. 
\end{abstract}

\tableofcontents

\newcommand{\soln}{\gamma}
\newcommand{\solnt}{g}
\newcommand{\dsoln}{\solnt'}
\newcommand{\dsolnt}{\solnt'}
\newcommand{\soltest}{\phi}
\newcommand{\tsoltest}{{{\tilde{\phi}}}}

\newcommand{\tdim}{{n}}

\renewcommand{\g}{{\soln}}
\newcommand{\gn}{{\gamma_0}}

\section{Introduction}
To find nice geometric representatives within a given knot class, several new energies have been invented 
in the last two decades. 
The earliest knot energy for smooth curves  was the so-called \emph{M\"obius energy} introduced 
by Jun O'Hara
in \cite{OH91}, 
\[
 E^{(2)}(\soln) := \intl_{\R/\Z}\intl_{-1/{2}}^{1/{2}} 
  \brac{\fracm{\abs{\soln(u+w)-\soln(u)}^2}-\fracm{d_{\soln}(u+w,u)^2}}\ \abs{\soln'(u+w)}\abs{\soln'(u)}\ 
 \d w\d u
\]
which was later on extended to the family of energies
\[
E^{(\alpha,p)}(\soln) := \intl_{\R/\Z}\intl_{-1/{2}}^{1/{2}} 
  \brac{\fracm{\abs{\soln(u+w)-\soln(u)}^\alpha}-\fracm{d_{\soln}(u+w,u)^\alpha}}^p\ \abs{\soln'(u+w)}\abs{\soln'(u)}\ 
 \d w\d u
\]
for $\alpha, p \in [1,\infty)$, see~\cite{OHara1994}.
Here $d_\g(u+w,u)$ denotes the intrinsic distance between $\g(u+w)$ and $\g(u)$ on the curve~$\g$.
More precisely,
\begin{equation}\label{eq:intrinsic}
 d_\g(u+w,u):= \min\big(\length(\g|_{[u,u+w]}),\length(\g)-\length(\g|_{[u,u+w]})\big)
\end{equation}
provided $\abs w\le\tfrac12$ where
$
 \length(\g) := \int_0^1 \abs{\dg(\theta)}\d\theta
$
is the length of~$\g$.

Crucially using the M\"obius invariance of this knot energy, Michael Freedman, Zheng-Xu He, and Zhenghan Wang \cite{Freedman1994} were able to show that there are minimizers of the M\"obius energy within every 
prime knot class and that these are in fact of class $C^{1,1}$. More precisely, they
could show that if $\g$ is a local minimizer with respect to the $L^\infty$-topology, and 
if $\g$ is parametrized by arc length, then $\g$ is $C^{1,1}$.
Together with a bootstrapping argument due to He \cite{He2000}, one then obtains that local 
minimizers of the M\"obius energy are smooth, also see~\cite{Reiter2009a}.

Unfortunately, motivated by numerical evidence, Rob Kusner and John Sullivan were led to conjecture 
that there are no minimizers within composite knot classes \cite{Kusner1997}. In contrast to that, there
are minimizers of the energies $E^{(\alpha,p)}$ in the case that $jp>2$ as shown in \cite{OHara1994}. 

In this article, we will prove that even only stationary points of the M\"obius energy are of class
$C^\infty$ under the mildest condition one can think of: that $E^{(2)}(\g)$
is finite - an assumption which, as shown in a recent work of the first author \cite{Blatt2010a}, is equivalent to 
assuming that $\g$ is an injective curve of class $H^{\frac 3 2}(\R / \Z, \R ^n)$.
Our motivation to do so is twofold:
First of all, of course, 
this is a much stronger result than the smoothness of 
local minimizers as stated above.
Secondly, the M\"obius invariance is
not essential for proving smoothness of local minimizers as we do not need it in our arguments.
Thus there is the chance
to study other, possibly not M\"obius invariant, critical knot energies, using the techniques
developed in this article.
Additionally, our arguments are not restricted to the
one-dimensional situation but can be applied to arbitrary dimensions.
 
The price we pay is that, instead of the very appealing geometric argument in \cite{Freedman1994}, we have to adapt some 
sophisticated techniques originally developed  by Tristan Rivi\`ere and Francesca Da Lio \cite{DR11Sphere,DR11Man,DL11MannD}
and the third author \cite{SchikorranharmSphere,SchikorraIntBoundFrac11} to deal with $\frac n2$- harmonic maps into manifolds.

The first task in order to prove this result, is to derive the Euler-Lagrange equation for such stationary
points. In \cite{Freedman1994}, it was shown that for simple closed curves 
$\g \in C^{1,1}(\mathbb R / \mathbb Z, \mathbb R^n)$ and $h\in C^{1,1}(\mathbb R / \mathbb Z, \mathbb R^n)$
we have
\begin{align*}
 \delta E ^{(2)}(\g;h) &:= \lim_{\tau \searrow 0 } \frac {E^{(2)}(\g+\tau h) - E^{(2)}(\g)}\tau \\
 &= \int_{\mathbb R / \mathbb Z} \int_{\mathbb R / \mathbb Z} 
  \left(\frac {\g'(u) h'(u) - \frac {\left\langle \g(u)- \g(v), h(u)-h(v) \right\rangle}
      {|\g(u)-\g(v)|^2}}{|\g(u)-\g(v)|^2 }\right)\ |\g'(v)|\ |\g'(u)|\ dv\ du.
\end{align*} 
 
We will show that this formula is still valid under the weaker assumption that 
$\g \in H^{3/2}(\R / \Z,$ $\R^\tdim)$ with $\dg\in L^{\infty}$, i.e., for
arc-length parametrized $\g$ we only assume that the M\"obius energy
is finite.
We call a curve \emph{regular}
if there is a positive constant $c=c(\g)$ with
$|\g'(x)| \geq c$ {for all } $x \in \mathbb R/ \mathbb Z$.

\begin{theorem}[$E^{(2)}\in C^1(\Hir\capH)$] \label{thm:EulerLagrangeEquation}
 The energy $\E$ is continuously differentiable on the space of injective and regular
 curves belonging to $\W\capH$. Furthermore, if
 $\g\in\W$ is injective and parametrized by arc-length
 and $\h\in \W\capH$
   the first variation \[ \delta\E(\g;\h) := \lim_{\tau\to0}\frac{\E(\g+\tau\h)-\E(\g)}\tau \]
   exists and equals
   \[
     2\lim_{\eps\searrow0} \iint\limits_{U_\eps}
     \br{\sp{\dg(u),\dh(u)}-\frac{\sp{\D\g,\D\h}}{\abs{\D\g}^2}} 
     \frac{\d w\d u}{\abs{\D\g}^2}
   \]
   where
   \begin{equation}\label{eq:Ue}
     U_\eps := \R/\Z\times\br{[-\tfrac12,-\eps]\cup[\eps,\tfrac12]}.
   \end{equation}
\end{theorem}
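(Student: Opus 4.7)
The plan is an $\eps$-regularization argument. For $\eps>0$ set
\[
  \Ee(\g) := \Wint\br{\fracm{\abs{\D\g}^2} - \fracm{d_\g(u+w,u)^2}}\abs{\dg(u+w)}\abs{\dg(u)}\d w\d u.
\]
The integrand is non-negative (since $\abs{\D\g}\le d_\g$) and smooth in $\g$ on $U_\eps$, so $\Ee$ is Fr\'echet-differentiable on $\Hir\capH$, its first variation is given termwise by the product rule, and $\Ee\nearrow\E$ monotonically as $\eps\searrow 0$. The aim is to pass to the limit $\eps\to 0$ and identify the limiting variation.

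Assuming $\g$ is arc-length parametrized, so that $\abs{\dg}=1$ and $d_\g(u+w,u)=|w|$, the product rule produces four pieces in $\delta\Ee(\g;\h)$. The extrinsic derivative contributes $-2\sp{\D\g,\D\h}/\abs{\D\g}^4$. The two Jacobian-derivative contributions collapse, via the $u\leftrightarrow u+w$ symmetry of $U_\eps$, into $2(\abs{\D\g}^{-2}-|w|^{-2})\sp{\dg(u),\dh(u)}$. The intrinsic derivative, after the substitution $s=u+tw$, equals $2|w|^{-2}\int_0^1\sp{\dg,\dh}(u+tw)\,\mathrm{d}t$, and translation invariance of the $u$-integral over $\R/\Z$ eliminates the $t$-dependence so that, after integrating in $u$, this piece coincides exactly with the $|w|^{-2}$-part of the Jacobian contribution and cancels it. What remains is the claimed formula at the $\eps$-level:
\[
  \delta\Ee(\g;\h) = 2\Wint\br{\sp{\dg(u),\dh(u)} - \frac{\sp{\D\g,\D\h}}{\abs{\D\g}^2}}\frac{\d w\d u}{\abs{\D\g}^2}.
\]

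It remains to pass to the limit $\eps\to 0$ and to promote this to $C^1$-regularity of $\E$. For $\g$ injective and regular and $\h\in\W\capH$, the perturbation $\g+\tau\h$ stays injective and regular for small $\tau$; by the first author's characterization of finiteness of $\E$ in terms of $H^{3/2}$ plus injectivity, $\E(\g+\tau\h)$ is uniformly bounded on a small interval around $\tau=0$. Combined with the monotone convergence $\Ee\nearrow\E$, this yields $\Ee(\g+\tau\h)\to\E(\g+\tau\h)$ uniformly in $\tau$, so the $\tau$-derivative of $\E$ exists and equals $\lim_\eps\delta\Ee$. Both the existence of this limit and its continuous dependence on $\g$ reduce to $L^1$-estimates on $U_\eps$, uniform in $\eps$, for the two integrands, which follow from the bi-Lipschitz bound $\abs{\D\g}\approx|w|$ (from injectivity and regularity) and the $C^{0,1/2}$-embedding of $H^{3/2}$. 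The main obstacle I anticipate is exactly this uniform-in-$\eps$ control: a direct Taylor expansion shows that, near $w=0$, the bracket $\sp{\dg,\dh}-\sp{\D\g,\D\h}/\abs{\D\g}^2$ is of size $|w|$ while $\abs{\D\g}^{-2}\sim|w|^{-2}$, so the integrand is only conditionally integrable and the four raw pieces of $\delta\Ee$ are each of $|w|^{-2}$-type. Hence the cancellation must be quantified at finite $\eps$---not just in the limit---in order to justify dominated convergence and the continuity of $\delta\E$.
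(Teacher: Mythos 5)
Your strategy — truncate to $\Ee$, differentiate under the integral, specialize to arc-length parametrization, and pass to the limit $\eps\searrow0$ — is the same as the paper's, and your $\eps$-level computation of $\delta\Ee$ recovers Lemma~\ref{lem:FrechetDifferentiable} in the arc-length case. But the passage to the limit, which you yourself flag as ``the main obstacle,'' is genuinely not closed, and the bridge you offer is incorrect: from uniform convergence of $\tau\mapsto\Ee(\g+\tau\h)$ to $\tau\mapsto\E(\g+\tau\h)$ one cannot conclude that the limit is differentiable with derivative $\lim_\eps\delta\Ee$; the standard theorem requires uniform convergence of the \emph{derivatives}, not of the functions. (The appeal to Dini is also circular, since it presupposes continuity of $\tau\mapsto\E(\g+\tau\h)$, which is part of what is being proved.) So a $C^1$-type Cauchy estimate on $\eps\mapsto\Ee$ is indispensable, and you never produce one; the bi-Lipschitz bound and the $C^{0,1/2}$-embedding of $\W$ give, as you observe, at best $O(|w|^{-1})$ pointwise control of the integrand, which is not absolutely integrable, so dominated convergence is unavailable.

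The missing ingredient is exactly the paper's Lemma~\ref{lem:CauchySequence}: on sets $X_\delta$ whose elements have uniformly small truncated $H^{1/2}$-seminorms of $\gamma'$ near $w=0$ — condition~\eqref{eq:uniformintegrability}, the substitute for uniform integrability — one has $\limsup_{\eps_1,\eps_2\to0}\lip_{X_\delta\cap\Upsilon}(\Ee[\eps_1]-\Ee[\eps_2])\le C\delta$. This rests on rewriting the integrand of $\Ee$ as
\[
\tfrac12\,\G[\g](u,w)\,\frac{\iint_{[0,1]^2}\abs{\dg(u+\theta_1 w)-\dg(u+\theta_2 w)}^2\d\theta_1\d\theta_2}{w^2}
-\tfrac12\,\G[\g](u,w)\,\frac{\iint_{[0,1]^2}\br{\abs{\dg(u+\theta_1 w)}-\abs{\dg(u+\theta_2 w)}}^2\d\theta_1\d\theta_2}{w^2},
\]
where $\G[\g]$ is a bounded Lipschitz function of lower-order data: here the $|w|^{-2}$ singularity is visibly absorbed by the squared differences of $\dg$, and the contribution from $\eps_1<|w|<\eps_2$ is controlled by the truncated $H^{1/2}$-seminorm. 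This is precisely the finite-$\eps$ quantification of the cancellation you correctly anticipate needing, and it is what powers both the existence of the directional derivative and the continuity of $\delta\E$ in Proposition~\ref{prop:formula}. Two further points: differentiating under the integral in $\Ee$ and proving continuity of $\delta\Ee$ require the uniform lower bounds of Lemma~\ref{lem:Upsilon} on a whole $\W\cap H^{1,\infty}$-neighbourhood of $\gamma$, not merely at $\gamma$; and the $C^1$ assertion of the theorem is for arbitrary $\g\in\Wir\capH$, so specializing to arc-length before passing to the limit produces only the variational formula, not the $C^1$ claim.
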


Though the space $\W\capH$ seems somewhat artificial at first sight, it just guarantees that 
we do not use bad parametrizations of our curves. The proof of this 
result is similar to~\cite{BlattReiterStationary11}.

We will then use the resulting Euler-Langrange equation for stationary points of the M\"obius energy to
prove that these points are smooth:

\begin{theorem}[Stationary points are smooth] \label{thm:MainResult}
Any stationary point $\soln \in H^{\frac 3 2}(\R / \Z, \R^n)$ of $E^{(2)}$, i.e., any curve
$\soln \in H^{\frac 3 2 }(\R / \Z, \R^n)$ for which
\begin{equation*}
 \delta E^{(2)} (\g;h) =0 \quad \mbox{for all $h \in C^{\infty}(\mathbb R / \mathbb Z, \mathbb R^n)$},
\end{equation*}
belongs to $C^\infty$ when parametrized by arc-length.
\end{theorem}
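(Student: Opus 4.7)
The plan is to recast the Euler--Lagrange equation from Theorem~\ref{thm:EulerLagrangeEquation} as a nonlinear fractional PDE of half-harmonic-map type for the tangent field $\dg$, and then to push the regularity of $\g$ above the scale-critical threshold $\W$ using the gauge and commutator techniques developed for $\tfrac12$-harmonic maps by Da~Lio, Rivi\`ere, and Schikorra. The decisive geometric input is that, under the arc-length assumption, $\abs{\dg}\equiv1$ and $\dg\cdot\g''\equiv0$; in particular $\dg$ maps into the unit sphere $\S^{\tdim-1}$, which is exactly the nonlinear target constraint governing the corresponding $\tfrac12$-harmonic map theory.

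First I would expand the EL integrand using
\[
 \D\g = w\,\dg(u) + \tfrac12 w^2\g''(u) + \dots, \qquad \abs{\D\g}^2 = w^2\br{1+O(w^2)},
\]
symmetrize in $w$, and integrate by parts in $u$ so as to isolate the scale-critical principal part. I expect the resulting weak identity to take the form
\[
 \sp{\laps{1/2}\dg,\laps{1/2}\dh} = \sp{\mathcal R(\g),\h} \qquad \text{for all }\h\in C^\infty(\R/\Z,\R^{\tdim}),
\]
where the remainder $\mathcal R(\g)$ is still scale-critical but inherits, via the constraint $\abs{\dg}\equiv1$, a decomposition
\[
 \mathcal R(\g) = \Omega\,\dg + G(\g),
\]
with $\Omega$ antisymmetric and of the same fractional regularity as $\dg$, and $G$ a strictly subcritical remainder collecting the non-M\"obius-invariant lower-order terms in the expansion.

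The main obstacle, I expect, is producing exactly this decomposition: one must extract an antisymmetric \emph{Rivi\`ere-type potential} $\Omega$ from the cumbersome pointwise integrand
\[
 \fracm{\abs{\D\g}^2}\br{\sp{\dg,\dh}-\fracm{\abs{\D\g}^2}\sp{\D\g,\D\h}},
\]
whose $1/\abs{\D\g}^2$ and $1/\abs{\D\g}^4$ factors must be handled by delicate multilinear paraproduct and commutator bounds in the spirit of the three-commutator identities of Da~Lio--Rivi\`ere. Once the antisymmetric structure is in place, a fractional Uhlenbeck-type change of gauge à la Schikorra rewrites $\laps{1/2}\dg - \Omega\,\dg$ in a form exhibiting a hidden Jacobian / $H^1$-$\BMO$ duality, which after standard localization yields an $\eps$-regularity statement: whenever $\vrac{\dg}_{H^{1/2}(I)}$ is small enough on an arc $I\subset\R/\Z$, then $\dg\in L^p(I')$ for some $p>2$ on a slightly smaller arc $I'$. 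Since the total $H^{1/2}$-energy of $\dg$ is finite, its concentration set is at most finite, and a covering plus removability argument upgrades $\dg$ to a subcritical Sobolev space on all of $\R/\Z$.

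With $\g$ now sitting strictly above the critical space $\W$, the reformulated equation becomes subcritical: iteratively applying fractional product rules and commutator estimates gains a fixed amount $\delta>0$ of fractional regularity at each step, so that $\g\in H^{3/2+k\delta}$ for every $k\in\N$, whence $\g\in C^\infty$. In summary, the whole argument hinges on the algebraic Step~2---isolating the antisymmetric structure hidden inside the M\"obius EL integrand---while the subsequent gauge construction, $\eps$-regularity, and bootstrap are careful but essentially standard applications of the Rivi\`ere--Da~Lio--Schikorra machinery.
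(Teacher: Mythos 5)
Your overall strategy---view $\dg$ as a sphere-valued map, exploit the antisymmetric (Rivi\`ere-type) structure, prove an improvement of integrability at the critical scale, and bootstrap---is indeed the philosophy the paper follows, and you have correctly identified the Da~Lio--Rivi\`ere--Schikorra machinery as the relevant toolbox. But several key steps as you describe them would not survive contact with the actual regularity level $H^{3/2}$, and one structural expectation is simply not how the critical part of the nonlinearity behaves.

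First, the Taylor expansion $\T{\g}=w\,\dg(u)+\tfrac12w^2\g''(u)+\dots$ is unavailable: for $\g\in H^{3/2}$ the second derivative $\g''$ is only a distribution, not a pointwise object, so one cannot symmetrize and integrate by parts against $\g''(u)$ to extract a principal part. The paper instead works throughout with the bilinear forms $Q$, $T_1$, $T_2$ and with difference quotients of $\g'$, never invoking $\g''$; the transition to the fractional Laplacian is made via He's identity (equation~\eqref{eq:FormulaForNorm} / Lemma~\ref{la:lhs}), which stays at the level of $\lapv\g'$. Second, your expectation that, after isolating an antisymmetric potential, the residual nonlinearity is ``strictly subcritical'' is false here: the term $T_2$ produces a genuinely critical trilinear quantity $\Gamma(u)$ (Lemma~\ref{la:lhs}), and the heart of the paper is the delicate pointwise bound of Lemma~\ref{la:T2est}, which expresses $\Gamma$ through iterated Riesz potentials of $\abs{\lapv\g'}$ and cannot be absorbed into an $\Omega\,\dg$ structure. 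Third, the paper does not perform a fractional Uhlenbeck gauge change: it uses the antisymmetry only to cancel the quadratic term $\int\lapv\dsolnt_i\,\omega_{ij}\,\lapv\dsolnt_j\,\varphi$ in identity~\eqref{eq:antisympde}, and then proves a Dirichlet-growth estimate for $\vrac{\lapv\dsolnt}_{(2,\infty),B_r}$ directly via quasi-locality of Riesz potentials (Lemmata~\ref{lem:QuasiLocality}--\ref{lem:QuasiLocalityOfNorms}, Proposition~\ref{pr:localcontrolelliptic}) and an iteration lemma (Lemma~\ref{lem:IterationLemma}). Finally, there is no $\eps$-regularity/concentration-set/removability step: the Morrey decay is obtained at \emph{every} point, because the localized $H^{1/2}$-energy of $\dg$ on small balls vanishes automatically by absolute continuity of the integral, so no smallness hypothesis needs to be propagated or patched. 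Your bootstrap paragraph, on the other hand, does match the paper's Section~\ref{sec:bootstrapping}, which uses fractional Leibniz and chain rules in Bessel-potential spaces to gain a fixed increment of regularity at each step. In short: the skeleton is right, but (i) replace the Taylor expansion by difference-quotient identities for $\lapv\g'$, (ii) anticipate a separate, non-antisymmetric critical trilinear term requiring its own pointwise Riesz-potential estimate, and (iii) drop the gauge and the concentration-compactness argument in favour of a direct Morrey-space growth lemma.
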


In \cite{BlattReiterStationary11}, improving a previous result~\cite{ReiterOharaKnot11},
the smoothness of stationary points of finite energy was already shown
for the case of $E^{(\alpha)}:= E^{(\alpha,1)}$, $\alpha \in (2,3)$, instead of the M\"obius energy.
It is worth noting, that those energies lead to 
a \emph{subcritical} Euler-Langrage equation, and that in some sense the regularity theory can be based on  Sobolev 
embeddings for fractional Sobolev and Besov spaces. 
In contrast to this, the Euler-Lagrange equation of the M\"obius energy is \emph{critical}. As for 
well-known critical geometric equations -- like the Euler-Lagrange equation of the
Willmore functional, see, e.g., \cite{Simon1993, Riviere2006},
or harmonic maps on $\mathbb R^2$, see, e.g., \cite{Helein1991,Riviere2007}, -- one has first to find a way 
to gain an $\eps$ of additional regularity (via gaining a $\delta$ of additional integrability) and then start a bootstrapping argument.
That is, in a quite natural way, the proof of Theorem~\ref{thm:MainResult} is an immediate consequence of two technically independent steps:

\begin{theorem}[Initial regularity]\label{th:reg:1alpha} \label{thm:FirstStep}
Let $\soln \in H^{\frac{3}{2}}(\R /\Z,\R^\tdim)$, $\soln' \in \S^{\tdim-1}$, be a stationary point of the M\"obius energy, i.e. satisfying $\delta\E(\g,\varphi) = 0$ for all $\varphi\in C^\infty\rzd$.
Then $\soln \in C^{1,\alpha}$, for some $\alpha > 0$, and  $\soln \in H^{\frac{3}{2},p}(\R /\Z,\R^\tdim)$ for some $p > 2$.
\end{theorem}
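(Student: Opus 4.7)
The plan is to view the Euler-Lagrange equation from Theorem~\ref{thm:EulerLagrangeEquation} as a nonlocal critical equation of fractional half-harmonic map type for the tangent $\gamma'$, which by the arc-length hypothesis takes values in the sphere $\mathbb{S}^{n-1}$. The extra integrability is to be extracted through compensation phenomena in the spirit of Da Lio-Rivi\`ere and the third author's work on $\frac{n}{2}$-harmonic maps, which override the scale-invariance of the problem.

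The first move is to extract the leading operator. Writing $|\gamma(u+w)-\gamma(u)|^2 = w^2 (1+r(u,w))$ with a remainder $r$ that is small in $H^{1/2}$ on short intervals, the kernel $|\gamma(u+w)-\gamma(u)|^{-2}$ decomposes into a principal part behaving like $w^{-2}$ plus a perturbation. The first variation from Theorem~\ref{thm:EulerLagrangeEquation} then splits into a leading bilinear form comparable to the Gagliardo form
\[
  \iint \frac{\langle \gamma'(u+w)-\gamma'(u),\; \varphi'(u+w)-\varphi'(u)\rangle}{w^2}\, dw\, du
  \sim \langle |D|^{1/2}\gamma',\,|D|^{1/2}\varphi'\rangle,
\]
plus remainder integrands that are bilinear in fractional differences of $\gamma'$. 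The arc-length constraint $|\gamma'|\equiv 1$ yields $\gamma'\cdot\gamma''=0$; projecting onto the tangent bundle $T_{\gamma'}\mathbb{S}^{n-1}$ and applying the algebraic manipulation familiar from harmonic maps into spheres recasts the equation in the schematic form
\[
  \gamma'\wedge |D|^{1/2}\gamma' \;=\; \Omega(\gamma,\gamma'),
\]
where $\Omega$ bundles together the remainder terms.

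Each contribution to $\Omega$, after symmetrization, has the structure of a commutator $[|D|^{1/2},a]\,b$ or of a three-term commutator, acting on pairs of $H^{1/2}$ functions, up to multiplication by $L^{\infty}$ factors such as $\gamma'$. The three-commutator estimates of Da Lio-Rivi\`ere, together with the variants developed by the third author, then imply that such quantities belong to a local Hardy space and hence to $L^{p}_{\mathrm{loc}}$ for some $p>1$, gaining integrability over the naive bilinear $L^1$ bound. Inverting $|D|^{1/2}$ locally via a Riesz potential estimate, and absorbing the $H^{1/2}$-small error terms on the left by choosing the localizing window small enough (using that $\gamma'\in H^{1/2}$ has small $H^{1/2}$-norm on sufficiently short intervals), gives $\gamma' \in H^{1/2,p}_{\mathrm{loc}}$ for some $p>2$. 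A covering of $\mathbb{R}/\mathbb{Z}$ by finitely many such windows upgrades this to the global bound $\gamma \in H^{3/2,p}$, and the one-dimensional Sobolev embedding $H^{3/2,p}\hookrightarrow C^{1,1/2-1/p}$ yields $\gamma\in C^{1,\alpha}$.

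The hardest part is the first step: recasting an Euler-Lagrange equation whose kernel is not a pure Riesz kernel, and whose integration measure $|\gamma'(u+w)|\,|\gamma'(u)|$ depends on $\gamma$ itself, into a form where the leading operator is cleanly identifiable as $|D|^{1/2}$ on $\gamma'$ and every remainder is of commutator type. Organizing the many error terms so that they fit the bilinear structure required by the compensation lemmas -- two $H^{1/2}$ factors with one derivative of regularity saved by cancellation -- is where the technical difficulty concentrates. Once this algebraic reduction is achieved, the remaining inversion and bootstrap follow the by-now-standard road-map for critical fractional harmonic map type equations.
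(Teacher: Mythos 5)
Your strategic orientation is correct and matches the paper's: the authors do view the Euler--Lagrange equation as a fractional-half-harmonic-map-type equation for $\gamma'\in\S^{n-1}$, decompose $\lapv g'$ into a normal part (parallel to $g'$) and a tangential part via the antisymmetric $\omega_{ij}$-trick, and lean heavily on quasi-locality of Riesz potentials plus Lorentz-space Sobolev embeddings in the Da Lio--Rivi\`ere tradition. But there is a genuine gap in your treatment of the right-hand side of the Euler--Lagrange equation. When the stationarity condition $Q(\gamma,h)=T_1(\gamma,h)+T_2(\gamma,h)$ is tested against a primitive of a normal direction $\phi$, the contribution from $T_2$ does \emph{not} reduce to a two- or three-term commutator acting on pairs of $H^{1/2}$ functions. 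It produces the genuinely trilinear quantity
\[
\Gamma(u)=\int_{(-1,1)^3}\int_{-1/4}^{1/4}\frac{\abs{g'(u)-g'(u+s_2w)}\,\abs{g'(u+s_3w)-g'(u+s_4w)}^2}{\abs{w}^{2}}\,dw\,ds,
\]
which lies outside the bilinear commutator calculus you invoke. Lemmata~\ref{la:lhs} and~\ref{la:T2est} --- explicitly flagged in the text as the essential new content of this section --- are precisely the extraction of $\Gamma$ from $T_2$ after testing, and a new pointwise bound for $\Gamma(u)$ by a cubic expression in Riesz potentials of $\abs{\lapv g'}$. Without a substitute for these, your claim that ``each contribution to $\Omega$ has the structure of a commutator'' is false, and the compensation machinery never engages the Euler--Lagrange equation. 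This is the central obstruction your proposal does not address.

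A secondary divergence is the functional-analytic vehicle. You propose a Hardy-space route yielding $L^p$-membership directly, which works in the pure sphere-valued half-harmonic setting. Because the present right-hand side is not of pure commutator type, the paper instead establishes a Dirichlet-growth estimate for the \emph{weak} norm $\vrac{\lapv g'}_{(2,\infty),B_r}$ on dyadic balls, runs the iteration in Lemma~\ref{lem:IterationLemma} to obtain Morrey decay $\vrac{\lapv g'}_{(2,\infty),B_r}\lesssim r^{\tilde\sigma}$, feeds this back into both the normal- and tangential-part estimates to place $\laps{s}\br{g_i'\omega_{ij}\lapv g_j'}$ and $\laps{1/4}\sbrac{g',\lapv g'}$ in Morrey spaces, and then applies Adams' theorem on Riesz potentials between Morrey spaces to conclude $\lapv g'\in L^p_{\mathrm{loc}}$ for some $p>2$. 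Your final Sobolev-embedding step to $C^{1,\alpha}$ is fine, but the path to the $L^p$-bound is substantially different from, and more delicate than, a single Hardy-space absorption.
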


\begin{theorem}[Bootstrapping] \label{thm:bootstrapping}
 For some $p > 2$, let $\soln \in H^{\frac{3}{2},p}(\R /\Z,\R^\tdim)$, $\soln' \in \S^{\tdim-1}$,
 be a stationary point of the M\"obius energy. Then $\soln$ is smooth.
\end{theorem}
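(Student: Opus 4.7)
The plan is to turn the Euler--Lagrange identity of Theorem~\ref{thm:EulerLagrangeEquation} into a nonlocal PDE of the schematic form
\[ \mathcal{L}\gamma' = F[\gamma,\gamma'], \]
where $\mathcal{L}$ is a linear translation-invariant operator of order one (morally $|D|$) and $F$ is a nonlinear functional built from difference quotients of $\gamma$ and $\gamma'$ together with the reciprocal extrinsic distance $1/|\gamma(u+w)-\gamma(u)|^2$, and then to bootstrap along the Sobolev scale. The rewriting would use the expansion $\gamma(u+w)-\gamma(u)=w\gamma'(u)+w\,r_\gamma(u,w)$, with $r_\gamma$ controlled by first-order differences of $\gamma'$, together with $|\gamma'|\equiv 1$ to cancel the terms that vanish when $\gamma$ is affine; an integration by parts in $\phi$ in the remaining leading quadratic form then isolates $\mathcal{L}\gamma'$. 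This parallels the subcritical strategy of~\cite{BlattReiterStationary11} for $E^{(\alpha)}$, $\alpha\in(2,3)$: the hypothesis $p>2$ places us strictly above the critical threshold, so fractional Schauder estimates and fractional product rules can replace the delicate compensation arguments needed for Theorem~\ref{thm:FirstStep}.

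Since $\gamma\in H^{3/2,p}$ for some $p>2$, one-dimensional Sobolev embedding yields $\gamma\in C^{1,\alpha}$ with $\alpha=\tfrac12-\tfrac1p>0$, and finiteness of $E^{(2)}$ (cf.~\cite{Blatt2010a}) forces the bi-Lipschitz bound $|\gamma(u+w)-\gamma(u)|\gtrsim|w|$; in particular, $\gamma'$ is H\"older continuous and every division by $|\gamma(u+w)-\gamma(u)|^2$ is harmless. Using fractional product and composition estimates (Kato--Ponce and Runst--Sickel type inequalities) together with the commutator techniques developed in~\cite{DR11Sphere,DR11Man,DL11MannD,SchikorranharmSphere,SchikorraIntBoundFrac11}, I would show that each term of $F[\gamma,\gamma']$ lies in a fractional Sobolev space strictly better than $H^{1/2,p}$ by a quantitative margin $\eps=\eps(p)>0$.

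Inverting $\mathcal{L}$ via fractional Schauder or $L^q$-multiplier theory then converts this improvement into $\gamma'\in H^{1/2+\eps,q}$, hence $\gamma\in H^{3/2+\eps,q}$ for a suitable $q$; iterating the scheme with the updated exponents yields $\gamma\in H^{k,q}$ for every $k\in\N$, so $\gamma\in C^\infty$. I expect the technical heart of the proof to lie in the second paragraph: choosing the correct algebraic decomposition of the nonlinear remainder $F$ so that no single piece meets a borderline fractional embedding, and handling the singular kernel at $w=0$ uniformly. Once the subcritical gap coming from $p>2$ is exploited consistently across the decomposition, the iteration should reduce to term-by-term applications of standard fractional Schauder theory.
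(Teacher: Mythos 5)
Your high-level strategy matches the paper's: regard the Euler--Lagrange identity $Q(\gamma,\cdot)=T_1(\gamma,\cdot)+T_2(\gamma,\cdot)$ as a nonlocal elliptic equation, observe via He's formula~\eqref{eq:FormulaForNorm} that $Q(\gamma,h)$ is essentially the $H^{1/2}$-pairing of $\gamma'$ with $h'$ (so $\mathcal L$ is indeed $\lapv$ up to a constant), and bootstrap by showing that $T(\gamma,\cdot)$ extends to a bounded functional on a Sobolev space strictly better than $H^{3/2}$. The paper likewise exploits the subcritical gap $p>2$, fractional Leibniz and chain rules (Lemmata~\ref{lem:LeibnizRule} and~\ref{lem:chain}), Besov-space difference characterizations, and a Fourier-coefficient comparison (Lemma~\ref{lem:bootstrappingStep}) to promote $\gamma$ along the Bessel-potential scale.

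Where the proposal has a genuine gap is precisely in the step you flag as the ``technical heart'': you never produce the algebraic decomposition that makes the nonlinear right-hand side subcritical. The expansion $\gamma(u+w)-\gamma(u)=w\gamma'(u)+w\,r_\gamma(u,w)$ that you suggest does not, on its own, expose the required cancellation; both $T_1$ and $T_2$ still carry a $|w|^{-4}$-type singularity. The paper's key observation is the factorization
\[
\frac{1}{|\gamma(u+w)-\gamma(u)|^\alpha}-\frac{1}{|w|^\alpha}
=\int_0^1\!\!\int_0^1 G^\alpha\!\left(\frac{\gamma(u+w)-\gamma(u)}{w}\right)\frac{|\gamma'(u+\tau_1w)-\gamma'(u+\tau_2w)|^2}{|w|^\alpha}\,d\tau_1\,d\tau_2
\]
with $G^\alpha(z)=\tfrac12|z|^{-\alpha}(1-|z|^\alpha)/(1-|z|^2)$ analytic away from $0$ for $\alpha\ge2$. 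This writes both $T_1$ and $T_2$ as superpositions of one family $T^\alpha_{s_1,s_2,\tau_1,\tau_2}$, isolating the quadratic difference $|\gamma'(u+\tau_1w)-\gamma'(u+\tau_2w)|^2/w^2$ as the only singular factor; that factor carries exactly the $H^{1/2}$-worth of smoothing that makes the $w$-integral in Lemma~\ref{lem:basicEstimate} converge, and the resulting Besov norm $\|\gamma'\|_{B^{1/2}_{2\tilde p,2}}$ is finite precisely because $p>2$. Without this (or an equivalently precise) decomposition, the claim that each term of $F[\gamma,\gamma']$ improves on $H^{1/2,p}$ by a fixed margin remains an unproved assertion. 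A secondary inaccuracy: the commutator and compensation machinery of the Da Lio--Rivi\`ere and Schikorra papers that you invoke is what the paper uses for the \emph{initial} regularity (Theorem~\ref{th:reg:1alpha}); once $p>2$ is in hand, the bootstrap is comparatively elementary and needs no compensation phenomena.
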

Theorem~\ref{th:reg:1alpha} is proven in Section \ref{sec:initial}, Theorem~\ref{thm:bootstrapping} in Section \ref{sec:bootstrapping}.
While Theorem~\ref{thm:bootstrapping} relies mainly on bringing together Sobolev embeddings and 
standard commutator estimates for Bessel potential spaces with techniques developed in
\cite{Blatt2010a}, some very delicate estimates
are needed to get anything more than the critical and initial regularity $H^{\frac 3 2}$ for 
stationary points as stated in Theorem~\ref{thm:FirstStep}.

Both theorems rely on a decomposition of the first variation dating back to~\cite[Formula~(4.5)]{He2000} which 
already proved to be helpful in the analysis of the functionals 
$E^{(\alpha)}$ for $\alpha \in (2,3)$ (cf. \cite{BlattReiterStationary11}) and the gradient flow 
of the energies $E^{(\alpha)}$ for $\alpha \in [2,3)$ \cite{Blatt2011,Blatt2011c}. 

For  $f, g: \R \to \R^\tdim$, $\eps>0$ let 
\begin{equation}\label{eq:defQeps}
 Q_\varepsilon(f,g) := 
  \intl_{0}^1 \intl_{[-\frac{1}{2},\frac{1}{2}]\backslash (-\varepsilon,\varepsilon)} \br{\sbrac{{f}'(u), {g}'(u)}\ w^2 - \Sbrac{ f(u+w)-f(u), g(u+w)-g(u)}}
  \frac{dw}{w^4}\ du,
\end{equation}
\begin{equation}\label{eq:defT1}
 T_1(f,g) := - \intl_{0}^{1}\intl_{-\frac{1}{2}}^{\frac{1}{2}}\sbrac{{f}'(u),\ {g}'(u) }\ \brac{\fracm{\abs{f(u+w)-f(u)}^2}-\fracm{\abs{w}^2}}\ dw\ du,
\end{equation}
and
\begin{equation}\label{eq:defT2}
 T_2(f,g) := \intl_0^1\intl_{-\frac{1}{2}}^{\frac{1}{2}} \sbrac{ f(u+w)-f(u),\ g(u+w)-g(u) }\ \brac{\fracm{\abs{f(u+w)-f(u)}^4}-\fracm{\abs{w}^4}}\ dw\ du.
\end{equation}
From Theorem~\ref{thm:EulerLagrangeEquation} we deduce
that a critical knot $\soln \in H^{\frac{3}{2}}(\R /\Z,\R^\tdim)$,  parametrized by arc length, satisfies
\begin{equation}
\label{eq:knotPDE}
 Q(\soln,h) := \lim_{\varepsilon \to 0} Q_\varepsilon(\soln,h) = T_1(\soln,h) + T_2(\soln,h) \quad \mbox{for all $h \in C^\infty(\R /\Z)$}.
\end{equation}
This is the form of the Euler-Lagrange equation,  with which we will work in the proofs of Theorem~\ref{thm:FirstStep}
and Theorem~\ref{thm:bootstrapping}.

Let us conclude this introduction by remarking that, in contrast to stationary
points of $E^{(\alpha,1)}$, for $p>1$ we do not expect stationary points of $E^{(\alpha,p)}$ to be $C^\infty$-smooth:
The resulting Euler-Lagrange equation should be in some sense
a nonlocal degenerate elliptic equation. 
Keeping in mind the regularity theory for elliptic degenerate equations, one might expect nevertheless
that stationary points are at least 
a bit more regular than an arbitrary finite-energy curve alone.

\paragraph*{Acknowledgements.}
The first author was supported by Swiss National Science Foundation Grant Nr.~200020\_125127 and
the Leverhulm trust.
The second author was supported by DFG Transregional Collaborative Research Centre SFB~TR~71.
The third author has received funding from the European Research Council under the European Union's Seventh Framework Programme (FP7/2007-2013) / ERC grant agreement no 267087, DAAD PostDoc Program (D/10/50763) and the Forschungsinstitut f\"ur Mathematik, ETH Z\"urich. He would like to thank Tristan Rivi\`ere and the ETH for their hospitality.

\section{Euler-Lagrange equation: Proof of Theorem \ref{thm:EulerLagrangeEquation}}

This section is devoted to the proof of Theorem~\ref{thm:EulerLagrangeEquation}
which especially involves the derivation of a formula for the first variation.

By $\Wir$ we will denote the set of \underline injective and \underline regular curves in~$\W$.
The set $\Hir[1,\infty]$ is defined accordingly.
First we will need
the following lemma, to guarantee that $E^{(2)}$ is well defined on a
sufficiently small $\W\capH$ neighborhood of the curve $\g$:


\begin{lemma}[$\Hir \capH$ is open in $\W \capH$]\label{lem:Upsilon}
 For any $\g\in\Hir \rzd \capH \rzd$ there is some $\tau_0=\tau_0(\g)>0$ with
 \begin{equation}\label{eq:Upsilon}
  \Upsilon := \sett{\g+\h}{\h\in H^{1,\infty}\rzd, \norm\dh_{L^\infty}\le\tau_0}\subset\Hir[1,\infty].
 \end{equation}
 Moreover, there is a constant $c=c(\g)>0$ with
 \begin{equation} \label{eq:fundamentalEstimates}
 \min\{|\tg(u+w) - \tg(u)|, d_{\tg}(u+w,u)\}  \geq c |w|, \quad \quad
 \quad|\dtg(u)| \geq c
 \end{equation}
 for all $\tg \in \Upsilon$ and $(u,w) \in U_0$.
\end{lemma}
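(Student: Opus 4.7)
The plan is to establish the three uniform lower bounds in~\eqref{eq:fundamentalEstimates} one by one, working with $\tg = \g + \h$ and $\norm{\dh}_{L^\infty} \le \tau_0$. The first two reduce to the preservation of the regularity condition under small $H^{1,\infty}$-perturbations; the third---the chord lower bound---is the main obstacle and rests on a genuine bi-Lipschitz estimate for the reference curve~$\g$.

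For the regularity, the triangle inequality gives $\abs{\dtg(u)} \ge \abs{\dg(u)} - \norm{\dh}_{L^\infty} \ge c_0 - \tau_0$ almost everywhere, where $c_0 > 0$ is the regularity constant of $\g$. Choosing $\tau_0 \le c_0/2$ yields $\abs{\dtg} \ge c_0/2$. Integrating this pointwise bound over the arc $[u,u+w]$ produces the lower bound $(c_0/2)\abs{w}$ for the length of the corresponding piece of $\tg$; the complementary arc has parameter length $1 - \abs{w} \ge \abs{w}$ (since $\abs{w} \le 1/2$), so its length satisfies the same inequality. Taking the minimum immediately gives $d_{\tg}(u+w,u) \ge (c_0/2)\abs{w}$.

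The decisive step is the chord length estimate, which requires a bi-Lipschitz estimate for $\g$ itself: there exists a constant $c_1 = c_1(\g) > 0$ with $\abs{\g(u+w) - \g(u)} \ge c_1 \abs{w}$ for all $(u,w) \in U_0$. Since $\g \in \Wir \capH$ is injective and of $H^{3/2}$ regularity, passing to the arc-length reparametrization---which is itself bi-Lipschitz because $c_0 \le \abs{\dg} \le \norm{\dg}_{L^\infty}$---places us in the setting of~\cite{Blatt2010a}, where finite M\"obius energy is characterized in terms of $H^{3/2}$; for such curves the desired bi-Lipschitz property is standard. Granted this, the triangle inequality yields $\abs{\tg(u+w) - \tg(u)} \ge (c_1 - \tau_0)\abs{w}$.

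Setting $\tau_0 := \tfrac12\min(c_0, c_1)$ and $c := \tfrac12\min(c_0, c_1)$ produces all three bounds in~\eqref{eq:fundamentalEstimates} simultaneously. Injectivity of $\tg$ is immediate from the chord bound, whose right-hand side is strictly positive for $w \ne 0$; hence the regularity and chord bounds together give $\tg \in \Hir[1,\infty]$, proving $\Y \subset \Hir[1,\infty]$. The only non-routine ingredient is the bi-Lipschitz constant $c_1$ for $\g$---which is where the $H^{3/2}$-hypothesis genuinely enters via~\cite{Blatt2010a}---and this is the principal obstacle; all remaining steps reduce to the triangle inequality and the pointwise regularity bound.
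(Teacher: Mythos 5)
Your proposal correctly identifies the structure of the argument: establish the three uniform bounds for $\g$ alone, then transfer them to $\tg = \g + \h$ by the triangle inequality with $\tau_0$ chosen as a fraction of the reference constants. The regularity and intrinsic-distance bounds are handled exactly as in the paper.

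The gap is in the bi-Lipschitz estimate $\abs{\g(u+w)-\g(u)}\ge c_1\abs w$, which you yourself call the principal obstacle but then dispose of with a reference to~\cite{Blatt2010a} and the assertion that it is ``standard.'' That is precisely the point the lemma is proving; simply asserting it defers the whole technical content. Moreover, the route you sketch is slightly circular in spirit: \cite{Blatt2010a} characterizes finiteness of the M\"obius energy for injective arc-length $H^{3/2}$ curves, but the present section is in the business of showing $E^{(2)}$ is well-defined and differentiable on a neighborhood of $\g$ in the first place, so you would need to point precisely to where~\cite{Blatt2010a} establishes the chord bound independently of the energy. The paper instead gives a short, self-contained argument that you should supply: pick $\delta$ so small that the local $H^{1/2}$-seminorm
\[
\left(\int_{B_r(z)}\int_{B_r(0)}\frac{\abs{\dg(u+w)-\dg(u)}^2}{\abs w^2}\,\d w\,\d u\right)^{1/2}\le\tfrac12
\]
for all $z$ and $r\le\delta$ (possible by absolute continuity of the integral), deduce via Jensen that the averaged oscillation $\inf_{\abs a\le1}\fint_{B_r(z)}\abs{\dg-a}\le\tfrac12$, and then test the chord with the best constant direction $a$ to get $\abs{\g(x)-\g(y)}\ge\tfrac12\abs{x-y}$ when $\abs{x-y}\le2\delta$. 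The large-scale part follows from injectivity plus compactness of $\{(x,y):\abs{x-y}\ge2\delta\}$. Without this (or an equally concrete substitute), the proposal has a genuine hole at its central step.
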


\begin{proof}
  We first show that $\g$ is bi-Lipschitz. To this end, choose $\delta\in(0,\frac12)$ with
\begin{equation*}
 \left(\int_{B_r(z)} \int_{B_r(0)} \frac
   {|\dg(u+w)-\dg(u)|^{2}}{|w|^{2}} \d w \d u\right)^{1/2} \leq \tfrac12
\end{equation*}
for all $z \in \mathbb R / \mathbb Z$ and all $r \in[0,\delta]$ which gives
\begin{align*}
  &\frac 1 {2r}\int_{B_r (z)}\bigg| \dg(x) - \frac 1 {2r}  \int_{B_r(z)} \dg(y) \d y\bigg| \d x \\
  &\leq \frac  1 {4r^2} \int_{B_r(z)} \int_{B_r(z)}  |\dg(x) - \dg(y)| \d x \d y \\
  &\leq \left(  \frac 1 {4r^2} \int_{B_r(z)} \int_{B_r(z)}
  |\dg(x) - \dg(y)|^{2} \d x \d y\right)^{1/2} \\
  &\leq \left( \int_{B_r(z)}  \int_{B_r(z)}
  \frac{|\dg(x) - \dg(y)|^{2}}{|x-y|^{2}} \d x \d y\right)^{1/2} \\
  &\leq \tfrac 1 2 .
\end{align*}
Since $\abs{\frac 1 {2r}  \int_{B_r(z)} \dg(y)
  \d y} \leq 1$  we deduce that
\begin{equation*}
  \inf_{\substack{a\in \mathbb R^n \\ |a|\leq 1}} \frac 1 {2r} \int_{B_r(z)} \abs{\dg(y)-a} \d y \leq \tfrac 1 2.
\end{equation*}
For $x,y \in \mathbb R / \mathbb Z$ with $|x-y| \leq 2\delta$ let $r:=
\tfrac12|x-y|$ and $z \in \mathbb R / \mathbb Z$ be the midpoint of
the shorter arc between $x$ and $y$. Then
\begin{align*}
  |\g(x)-\g(y)| &= \sup_{\substack{a\in \mathbb R^n \\ |a|\leq 1}}\;
  \int_{B_r(z)}\langle \dg(t),a\rangle \d t\\
 &= \sup_{\substack{a\in \mathbb R^n \\ |a|\leq 1}} \; \int_{B_r(z)}\langle
 \dg(t) ,\dg(t) + (a-\dg(t))\rangle \d t\\
&\geq \left( 1 - \inf_{\substack{a\in \mathbb R^n \\ |a|\leq 1}}
  \frac 1 {2r}\int_{B_{r}(z) }|\dg(t)-a|\d t \right) |x-y| \\
  &\geq\tfrac 12 |x-y|
\end{align*}
for all $x,y \in \mathbb R / \mathbb Z$ with $|x-y|\leq 2\delta$.
Since $\g$ is embedded and
\[ (x,y) \mapsto \frac
{|\g(y)-\g(x)|}{|y-x|} \]
 defines a continuous positive function on
$I_\delta :=\{(x,y) \in (\mathbb R / \mathbb Z)^2: |x-y| \geq 2\delta\}$, we furthermore have
\begin{equation*}
  |\g(x) - \g(y)| \geq \underbrace{\min_{(\tilde x,\tilde y) \in I_\delta}\frac {|\g(\tilde y)-\g(\tilde x)|}{|\tilde y-\tilde x|}}_{>0} |x-y|.
\end{equation*}
for all $(x,y) \in I_\delta$.
Hence, there is a $c_0=c_0(\g)>0$ with
\begin{equation*}
 |\g(x) - \g(x+w)| \geq c_0 |w|
\end{equation*}
for all $w \in [-1/2, 1/2]$.
Lessening $c_0$ if necessary, we can also achieve by regularity
\[ \abs{\dg}\ge c_0 \qquad\text{on }\R/\Z. \]

Letting $\tau_0:=\tfrac12 c_0$ we obtain
for arbitrary $\tg\in \Upsilon$
\begin{align*}
 \abs{\tg(u+w)-\tg(u)} &\ge \abs{\g(u+w)-\g(u)} - \abs{(\tg-\g)(u+w)-(\tg-\g)(u)} \\
 &\ge c_0\abs w-\norm{\dg-\tdg}_{L^\infty}\abs w \\
 &\ge \tfrac12c_0\abs w
\end{align*}
and
\[ \abs{\tdg}\ge\abs{\dg}-\abs{\tdg-\dg}\ge c_0-\norm{\dg-\tdg}_{L^\infty}\ge\tfrac12c_0. \]
From the latter estimate
we deduce by~\eqref{eq:intrinsic} for $u\in\R/\Z$, $w\in[-\tfrac12,\tfrac12]$
\[ d_{\tg}(u+w,u) \ge d_{\tg}(u\pm w,u) \ge \tfrac12c_0\abs w. \]
We have established~\eqref{eq:fundamentalEstimates} which gives~\eqref{eq:Upsilon}.
\end{proof}

We will use the last lemma to prove the following theorem, from which 
Theorem~\ref{thm:EulerLagrangeEquation} will follow quite easily.

\begin{proposition}\label{prop:formula}
  The energy $\E$ is continuously differentiable on $\br{\Wir\capH}\rzd$.
  The derivative of $\E$ at $\g \in {\Wir\capH}$ in direction $\h \in {\Wir\capH}$ exists and
  is given by
  \begin{equation}\label{eq:formulaD}
  \begin{split}
    &\delta\E(\g;\h)  = \\
    &2\lim_{\varepsilon \searrow 0} \iint\limits_{U_\eps} \Bigg\{
    \left(
      \frac 1 {\abs{\D\g}^2}-\frac 1 {d_\g(u+w,u)^2} \right)
      \left\langle \frac{\dg(u)}{\abs{\dg(u)}^2},\dh(u) \right\rangle 
\\
    &\qquad\qquad {}-
      \left( \frac{\sp{\D\g,\D\h}}{\abs{\D\g}^{4}}
	-\frac{ \left.\tfrac{\d}{\d\tau}\right|_{\tau=0}d_{\g+\tau\h}(u+w,u)}
		{d_\g(u+w,u)^{3}} \right)\Bigg\} 
\\ &\hspace{7.6cm} |\dg(u+w)| |\dg(u)|\d w\d u .
  \end{split}
  \end{equation}
\end{proposition}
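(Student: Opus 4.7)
The plan is to establish the formula first on the truncated domain $U_\eps$ by differentiating under the integral via dominated convergence, and then to pass to the limit $\eps\searrow 0$ using a $\tau$-uniform, $L^1(U_0)$ majorant of the full integrand. Lemma~\ref{lem:Upsilon} is the essential input, providing a neighbourhood $\Upsilon$ of the reference curve $\g$ within which the bi-Lipschitz estimates~\eqref{eq:fundamentalEstimates} hold with a common constant $c$, so that the perturbations $\gt := \g + \tau\h$ remain admissible for all sufficiently small $|\tau|$.

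First I would fix $\g,\h\in\Wir\capH$, choose $\tau_1>0$ so that $\gt\in\Upsilon$ for all $|\tau|\le\tau_1$, and abbreviate the $\E$-integrand at $\gt$ by $F_\tau(u,w)$. For each $w\ne0$ the map $\tau\mapsto F_\tau(u,w)$ is smooth, and a direct product- and chain-rule computation produces its derivative at $\tau=0$: differentiating the bracketed factor $|\D\g|^{-2}-d_\g^{-2}$ produces, through the $-2$ arising from each inverse square, exactly the (signed) contribution of the second group in the integrand of~\eqref{eq:formulaD}, whereas differentiating the factor $|\dgt(u+w)|\,|\dgt(u)|$ produces two symmetric summands containing $\sp{\dg/|\dg|^2,\dh}$ evaluated at $u+w$ and at $u$ respectively. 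The substitution $(u,w)\mapsto(u+w,-w)$, which preserves $U_\eps$ and maps $F_\tau$ to itself, then collapses these two summands into a single one, thereby producing the explicit factor of $2$ in~\eqref{eq:formulaD}.

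Next I would justify the differentiation on $U_\eps$: by~\eqref{eq:fundamentalEstimates} and the $L^\infty$-bounds on $\dg$ and $\dh$, the difference quotient $\tau^{-1}(F_\tau - F_0)$ is bounded on $U_\eps$ by a constant depending only on $\eps$, $\g$ and $\h$, uniformly in $|\tau|\le\tau_1$. Dominated convergence then yields
\[
\tfrac{d}{d\tau}\big|_{\tau=0} \iint_{U_\eps} F_\tau\,dw\,du \;=\; \iint_{U_\eps} \partial_\tau F_\tau|_{\tau=0}\,dw\,du,
\]
which is exactly the right-hand side of~\eqref{eq:formulaD} with $\eps$ fixed. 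To pass $\eps\searrow 0$, I would exhibit a function $M\in L^1(U_0)$ such that $|F_\tau|\le M$ for all $|\tau|\le\tau_1$, using the arc-minus-chord identity $d_\g(u+w,u)^2 - |\D\g|^2 = \int_0^w\!\int_0^w (|\dg(u+s)||\dg(u+t)| - \sp{\dg(u+s),\dg(u+t)})\,ds\,dt$ and its analogue for $\gt$. By~\eqref{eq:fundamentalEstimates} the latter can be controlled pointwise by a constant multiple of the former, so that $M$ may be chosen as a multiple of the (finite) integrand of $\E(\g)$. This yields the uniform-in-$\tau$ convergence $\iint_{U_\eps} F_\tau \to \E(\gt)$, which permits interchanging $\lim_{\tau\to 0}$ with $\lim_{\eps\to 0}$ and establishes~\eqref{eq:formulaD}.

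The main obstacle I anticipate is the construction of this uniform majorant $M$ under the weak hypothesis $\g\in\Wir\capH$, in which only $\dg\in L^\infty$ is available: comparing the arc-minus-chord integrands for $\gt$ and for $\g$ requires careful bookkeeping based on $\|\dg - \dgt\|_{L^\infty} \le |\tau|\|\dh\|_{L^\infty}$ together with the uniform positivity of $|\dgt|$ furnished by~\eqref{eq:fundamentalEstimates}. Once $M$ is in place, the continuity of $\g\mapsto\delta\E(\g;\h)$ required for $C^1$-regularity follows by rerunning the same dominated-convergence scheme, now allowing $\g$ itself to vary in a small $\W\capH$-neighbourhood of the reference curve, on which the constant $c$ in~\eqref{eq:fundamentalEstimates} and the bound $\E(\g)$ remain uniform.
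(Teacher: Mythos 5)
Your outline for the \emph{truncated} problem is sound and parallels the paper: the pointwise differentiation under $\iint_{U_\eps}$ with an $\eps$-dependent dominating function is exactly Lemma~\ref{lem:FrechetDifferentiable}, and the substitution $(u,w)\mapsto(u+w,-w)$ to collapse the two symmetric variational summands into the factor $2$ is the same device. The genuine gap appears in the passage $\eps\searrow 0$. You propose to exhibit a $\tau$-uniform majorant $M\in L^1(U_0)$ for the energy integrand $F_\tau$ and to deduce from the resulting \emph{uniform-in-$\tau$ convergence} $\iint_{U_\eps}F_\tau\to\E(\g+\tau\h)$ that the limits $\lim_{\tau\to0}$ and $\lim_{\eps\to0}$ commute. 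That inference is not valid: uniform convergence of a family of functions of $\tau$ does not yield convergence of their derivatives at $\tau=0$, and the difference quotient $\tau^{-1}\bigl(\Ee(\g+\tau\h)-\E(\g+\tau\h)\bigr) - \tau^{-1}\bigl(\Ee(\g)-\E(\g)\bigr)$ is \emph{not} controlled by $\sup_\tau\iint_{\R/\Z\times[-\eps,\eps]}M$, because the $1/\tau$ destroys the uniform smallness.

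Nor can you repair this by instead dominating $\partial_\tau F_\tau|_{\tau=0}$ in $L^1(U_0)$: after arc-length normalization the critical piece of that integrand is
\[
\frac{\sp{\D\g,\D\h}}{|\D\g|^4}
-\frac{\int_0^1\sp{\dg(u+\theta w),\dh(u+\theta w)}\,\d\theta}{w^2},
\]
which, after expanding, scales like $w^{-2}\,|\dg(u+sw)-\dg(u+tw)|\,\|\dh\|_{L^\infty}$; for $\dg\in\W[1/2]$ only the \emph{square} of this difference quotient is integrable against $w^{-2}$, so no $L^1(U_0)$ majorant exists. This is precisely why the statement retains the $\lim_{\eps\searrow0}$ rather than asserting an absolutely convergent integral over $U_0$. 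What is actually needed — and what the paper supplies in Lemma~\ref{lem:CauchySequence}, estimate~\eqref{eq:CauchySequence2} — is a \emph{Lipschitz} bound on the differences $\Ee[\eps_1]-\Ee[\eps_2]$ on the curve $\{\g+\tau\h\}$, quantified in terms of the local $\W[1/2]$-seminorm of the derivatives over $[-\eps,\eps]$ (the ``uniform integrability'' hypothesis~\eqref{eq:uniformintegrability}). This shows that $\tau\mapsto\Ee(\g+\tau\h)$ is a Cauchy family in $C^1((-\tau_0,\tau_0))$, not merely in $C^0$, and only then do the derivatives converge as $\eps\to0$. Your proposal is missing this $C^1$-level cancellation argument, and without it the limit interchange does not follow.
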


As $\g$ is absolutely continuous and regular, the derivative 
$\left.\tfrac{\d}{\d\tau}\right|_{\tau=0}d_{\g+\tau\h}(u+w,u)$ is well-defined for almost all
$(u,w) \in U_0$. From~\eqref{eq:intrinsic} we deduce
\begin{multline}\label{eq:dd} 
 \left.\tfrac{\d}{\d\tau}\right|_{\tau=0} d_{\g+\tau\h} (u+w,u) =
 \begin{cases} |w|\int_{0}^1 
    \left\langle \frac {\dg(u+\sigma w)}{|\dg(u+\sigma w)|} , \dh(u+\sigma w) \right\rangle d\sigma
    &\text{if } \length(\g|_{[u,u+w]}) <\frac 1 2 \length(\g),\\
    -|w|\int_{0}^1
    \left\langle  \frac {\dg(u+\sigma w)}{|\dg(u+\sigma w)|} , \dh(u+\sigma w) \right\rangle d\sigma
    &\text{if } \length(\g|_{[u,u+w]}) > \frac 1 2 \length(\g).
    \end{cases}
\end{multline}

To prove Proposition~\ref{prop:formula}, we will first show that the following approximations
of the energy $\E$, in which we cut off the singular part, are continuously differentiable
and provide a formula for the first variation.
For $\varepsilon\in(0,\tfrac12)$ we set
\begin{equation*}
  \Ee(\g) := \iint\limits_{ U_\eps} 
    \br{\frac1{\abs{\D\g}^2}-\frac1{d_\g(u+w,u)^2}}\abs{\dg(u+w)}\abs{\dg(u)}
    \d w\d u.  
\end{equation*}

\begin{lemma} \label{lem:FrechetDifferentiable}
For $\eps\in(0,\tfrac12)$ the functional $\Ee$ is continuously differentiable on the space of all
injective regular curves in $\W \rzd \cap H^{1,\infty}\rzd$.
The directional derivative at $\g\in \Wir \rzd \cap H^{1,\infty} \rzd$ in direction 
$\h\in \W \rzd \cap H^{1,\infty} \rzd$ is given by
\begin{equation}\label{eq:FrechetDifferentiable}
\begin{split}
    &\delta\Ee(\g;\h)  = \\
    & 2\iint\limits_{U_\eps} \Bigg\{
    \left(
      \frac 1 {\abs{\D\g}^2}-\frac 1 {d_\g(u+w,u)^2} \right)
      \left\langle \frac{\dg(u)}{\abs{\dg(u)}^2},\dh(u) \right\rangle 
\\
    &\qquad \qquad {}-
      \left( \frac{\sp{\D\g,\D\h}}{\abs{\D\g}^{4}}
	-\frac{ \left.\tfrac{\d}{\d\tau}\right|_{\tau=0}d_{\g+\tau\h}(u+w,u)}
		{d_\g(u+w,u)^{3}} \right)\Bigg\} 
\\ &\hspace{7.6cm} |\dg(u+w)| |\dg(u)|\d w\d u .
  \end{split}
\end{equation}
\end{lemma}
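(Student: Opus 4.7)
The approach is to (i) differentiate the integrand of $\Ee$ pointwise in $\tau$, (ii) interchange $\left.\tfrac{\d}{\d\tau}\right|_{\tau=0}$ with $\iint_{U_\eps}$ via dominated convergence, and (iii) symmetrize the resulting expression through the measure-preserving involution $(u,w)\mapsto(u+w,-w)$ of $U_\eps$. Fix $\g\in\Wir\rzd\cap H^{1,\infty}\rzd$ and $\h\in\W\rzd\cap H^{1,\infty}\rzd$. Lemma~\ref{lem:Upsilon} provides constants $c=c(\g)>0$ and $\tau_0=\tau_0(\g)>0$ such that, whenever $\abs\tau\vrac{\dh}_{L^\infty}\le\tau_0$, the curve $\g+\tau\h$ lies in $\Hir[1,\infty]$ and the estimates~\eqref{eq:fundamentalEstimates} hold. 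Combined with $\abs w\ge\eps$ on $U_\eps$, one obtains the uniform-in-$\tau$ lower bounds $\abs{\D\g+\tau\D\h}\ge c\eps$, $d_{\g+\tau\h}(u+w,u)\ge c\eps$, and $\abs{\dg+\tau\dh}\ge c$.

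\textbf{Pointwise differentiation.} Write the integrand of $\Ee(\g+\tau\h)$ as $K_\tau S_\tau$, where
\[ K_\tau := \tfrac{1}{\abs{\D\g+\tau\D\h}^{2}}-\tfrac{1}{d_{\g+\tau\h}(u+w,u)^{2}},\ S_\tau := \abs{\dg(u+w)+\tau\dh(u+w)}\abs{\dg(u)+\tau\dh(u)}. \]
The chain rule, together with~\eqref{eq:dd}, yields at $\tau=0$
\[ \left.\tfrac{\d}{\d\tau}\right|_{\tau=0}\! K_\tau = -\frac{2\sp{\D\g,\D\h}}{\abs{\D\g}^{4}} + \frac{2\left.\tfrac{\d}{\d\tau}\right|_{\tau=0}\! d_{\g+\tau\h}(u+w,u)}{d_\g(u+w,u)^{3}}, \]
\[ \left.\tfrac{\d}{\d\tau}\right|_{\tau=0}\! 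S_\tau = \sp{\tfrac{\dg(u+w)}{\abs{\dg(u+w)}},\dh(u+w)}\abs{\dg(u)} + \sp{\tfrac{\dg(u)}{\abs{\dg(u)}},\dh(u)}\abs{\dg(u+w)}. \]

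\textbf{Exchange, symmetrization, continuity.} Using the uniform lower bounds from the setup, the difference quotient $\tau^{-1}(K_\tau S_\tau - K_0 S_0)$ admits an $L^\infty(U_\eps)$-dominator independent of $\tau$ in the range $\abs\tau\vrac{\dh}_{L^\infty}\le\tau_0$; dominated convergence on the finite-measure set $U_\eps$ thus gives $\left.\tfrac{\d}{\d\tau}\right|_{\tau=0}\Ee(\g+\tau\h)=\iint_{U_\eps}\left.\tfrac{\d}{\d\tau}\right|_{\tau=0}(K_\tau S_\tau)\,\d w\,\d u$. To compress the two summands of $\left.\tfrac{\d}{\d\tau}\right|_{\tau=0}\! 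S_\tau$ into the single-point form on the first line of~\eqref{eq:FrechetDifferentiable}, apply the involution $(u,w)\mapsto(u+w,-w)$ of $U_\eps$: it is measure-preserving, and $\abs{\D\g}$, $d_\g(u+w,u)$, $\abs{\dg(u+w)}\abs{\dg(u)}$ are all invariant under it; the $(u+w)$- and $u$-contributions therefore coincide upon integration and, together with the $-2$ and $+2$ produced by the chain rule inside $\left.\tfrac{\d}{\d\tau}\right|_{\tau=0}\! K_\tau$, yield the overall factor~$2$ of~\eqref{eq:FrechetDifferentiable}. The continuity of $\g\mapsto\delta\Ee(\g;\cdot)$ follows by the same domination argument applied on the full $H^{1,\infty}$-neighborhood of $\g$ on which Lemma~\ref{lem:Upsilon} preserves the lower bounds, giving $\Ee\in C^1$.

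\textbf{Main obstacle.} The delicate ingredient is the differentiability of the intrinsic distance $d_{\g+\tau\h}(u+w,u)$ in $\tau$: the $\min$ in~\eqref{eq:intrinsic} is non-smooth on the set where the two competing arcs have equal length. That set has measure zero in $U_\eps$, and off it $d$ coincides locally with a single arclength integral which is smooth in $\tau$, with derivative~\eqref{eq:dd}. On the exceptional set, $d_{\g+\tau\h}$ remains Lipschitz in $\tau$ (as a minimum of two Lipschitz-in-$\tau$ functions), so $\tau^{-1}(d_{\g+\tau\h}-d_\g)$ is still bounded on $U_\eps$ by $c^{-1}\vrac{\dh}_{L^\infty}$ via the lower bound $\abs{(\g+\tau\h)'}\ge c$; this suffices for dominated convergence. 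Once this point is settled, the remainder is routine.
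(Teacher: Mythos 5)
Your proof is correct and follows essentially the same route as the paper: pointwise differentiation of the integrand, dominated convergence on the finite-measure set $U_\eps$ using the uniform lower bounds from Lemma~\ref{lem:Upsilon}, and symmetrization to obtain the factor $2$ and the single-point form of the first term. You make explicit what the paper calls a ``suitable reparametrization'' by naming the measure-preserving involution $(u,w)\mapsto(u+w,-w)$ of $U_\eps$, which is a nice touch; your treatment of the differentiability of $d_{\g+\tau\h}$ via the null set of switchover points also matches the paper's reasoning.

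One spot is thinner than it should be: the final sentence asserts that continuity of $\g\mapsto\delta\Ee(\g;\cdot)$ ``follows by the same domination argument.'' Domination alone gives only boundedness; to conclude continuity you additionally need a.e.\ pointwise continuity of the integrand as $\g$ varies, so that Lebesgue's continuity theorem applies. The subtle piece is the $\left.\tfrac{\d}{\d\tau}\right|_{\tau=0}d_{\g+\tau\h}$ factor, whose formula~\eqref{eq:dd} has a sign that flips across the set $\{\mathscr L(\g|_{[u,u+w]})=\tfrac12\mathscr L(\g)\}$; regularity of $\g$ makes this a null set in $U_\eps$, and continuity of the length functional on absolutely continuous curves then gives a.e.\ convergence of the integrand. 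You already state the null-set observation while discussing differentiability in $\tau$, but it should be invoked again here (together with linearity/boundedness in the direction $\h$) to conclude that the integrand defines a continuous map into $L^1(U_\eps)$, from which $\Ee\in C^1$ follows.
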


\begin{proof}
Applying Lemma~\ref{lem:Upsilon}, we obtain an $\W \capH$-neighborhood $Y \subset \Wir \capH$ of $\g$
such that~\eqref{eq:fundamentalEstimates} uniformly holds on $Y$ for any element in $U_\varepsilon$.
The integrand in~\eqref{eq:FrechetDifferentiable} is almost everywhere
the pointwise derivative of the integrand in $\Ee$. Using~\eqref{eq:intrinsic} and~\eqref{eq:dd}, one sees
furthermore that this pointwise derivative is majorized by some $L^1$-function. So,
Lebesgue's Theorem permits to interchange differentiation and integration which,
by a suitable reparametrization, results in~\eqref{eq:FrechetDifferentiable}.

As for continuity of $\Ee$ and $\delta\Ee$ the only difficulty is to treat the intrinsic distance.
Recalling the continuity of the length functional with respect to absolutely continuous curves
we can directly read off from~\eqref{eq:intrinsic} that
the integrand of $\Ee$ defines a continuous operator
$(H_{\mathrm{ir}}^{3/2}\cap H^{1,\infty})(\mathbb R / \mathbb Z, \mathbb R^n) \to L^1(U_{\varepsilon})$.

Since $\gamma$ is regular, for any $u\in\R/\Z$ there are at most two points $w\in[-\tfrac12,\tfrac12]$ satisfying
$\mathscr L(\gamma|_{[u,u+w]}) = \tfrac 1 2 \mathscr L(\gamma)$ which results in a null set in $U_0$.
Additionally using~\eqref{eq:dd}, we see that the integrand of $\delta\Ee$ gives rise to a continuous mapping
$(H_{\mathrm{ir}}^{3/2}\cap H^{1,\infty})(\mathbb R / \mathbb Z, \mathbb R^n) \times H^{1,\infty} (\mathbb R / \mathbb Z, \mathbb R^n)
\to L^1(U_{\varepsilon})$.
Being linear and bounded in the second component, it can be viewed as a continuous mapping from $\W[1,\infty]$
into the linear bounded operators $\Hir\capH\to L^1(U_\eps)$.

Altogether, the integrand of $\Ee$ is a continuously differentiable functional 
$H_{\mathrm{ir}}^{3/2}\capH\to L^1(U_\eps)$.
The statement now follows from the chain rule and the fact that the integration operator
\[ L^1(U_\varepsilon) \to \mathbb R, \qquad
 g \mapsto \iint_{U_\varepsilon} g(u,w) \d u\d w, \]
is continuously differentiable as it is a bounded linear operator.
\end{proof}

Due to the fact that bounded $L^1$-sequences are not uniformly integrable, the approximations $\Ee$ do not even form
a Cauchy sequence in $C^0(\Hir\capH)$.
In order to prove Proposition~\ref{prop:formula}, we state in Lemma~\ref{lem:CauchySequence} below
that $\Ee$ is \emph{nearly} a Cauchy sequence 
in $C^1(X_{\delta})$ for subsets $X_\delta \subset \Wir\capH$, $\delta\ge0$,
which satisfy the following substitute of the uniform integrability property
\begin{equation}\label{eq:uniformintegrability}
 \limsup_{\varepsilon \rightarrow 0}\sup_{\gamma \in X_\delta} 
  \left(\iint_{\mathbb R / \mathbb Z \times [-\eps, \eps]}
  \frac {|\gamma'(u+w) - \gamma'(u)|^2}{w^2} \d w \d u \right) ^{1/2}
\leq \delta.
\end{equation}
The statement involves the Lipschitz constant
\[ \lip_Y E = \sup_{\substack{f,\tilde f\in Y\\f\ne\tilde f}}\frac{\abs{E(f)-E(\tilde f)}}{\norm{f-\tilde f}} \]
for some real-valued functional $E$ and a subset $Y$ contained in its domain.

\begin{lemma} \label{lem:CauchySequence}
 We have
 \begin{equation*}
  \Ee(\gamma) \rightarrow \E(\gamma)
 \end{equation*}
 for all $\gamma \in \Hir \capH$.

 Furthermore, for any $\gn \in \Wir\capH$ there is an open subset $\Upsilon \subset \Wir\capH$ 
 and a constant $C=C(\gn) < \infty$ such that
\begin{equation}
 \limsup_{\varepsilon_1, \varepsilon_2 \rightarrow 0}\lip_{X_\delta\cap\Upsilon} (\E[\eps_1] - \E[\eps_2])
 \le C \delta\label{eq:CauchySequence2}
\end{equation}
for all subsets $X_\delta \subset \W\capH$ satisfying \eqref{eq:uniformintegrability} with $\delta \in[0,1]$.
\end{lemma}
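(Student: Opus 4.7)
The plan is to treat the two assertions separately: the pointwise convergence is a monotone convergence argument, whereas the Lipschitz bound reduces via the mean value theorem to an estimate of $\delta(\Ee[\eps_1]-\Ee[\eps_2])$ on a vanishing annulus, where the chord--arc cancellation built into \eqref{eq:FrechetDifferentiable} produces a factor matching the seminorm in \eqref{eq:uniformintegrability}.

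For the pointwise convergence, Lemma~\ref{lem:Upsilon} applied to $\g$ itself yields $|\D\g|,\,d_\g(u+w,u)\ge c|w|$ with $c=c(\g)>0$, so the integrand of $\E$ is everywhere defined on $U_0$. Since the chord never exceeds the arc, $|\D\g|\le d_\g(u+w,u)$, the integrand is nonnegative, and by the characterisation of \cite{Blatt2010a} it belongs to $L^1(U_0)$ whenever $\g\in\Hir\capH$. Monotone convergence then gives $\Ee(\g)\nearrow\E(\g)$.

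For the Lipschitz estimate, choose $\Upsilon$ to be the convex $\capH$-neighbourhood of $\gn$ provided by Lemma~\ref{lem:Upsilon}. By Lemma~\ref{lem:FrechetDifferentiable}, $\Ee[\eps_1]-\Ee[\eps_2]$ is continuously differentiable on $\Upsilon$; the mean value theorem reduces the task to bounding $\big|\delta(\Ee[\eps_1]-\Ee[\eps_2])(\g_t;\h)\big|$ uniformly in $\g_t$ on the segment between arbitrary $f,\tilde f\in X_\delta\cap\Upsilon$, with $\h=f-\tilde f$. By \eqref{eq:FrechetDifferentiable}, this derivative is the same integrand, now integrated only over the annulus $A:=(\R/\Z)\times\{w:\eps_\star<|w|\le\eps^\star\}$, $\eps_\star:=\min(\eps_1,\eps_2)$, $\eps^\star:=\max(\eps_1,\eps_2)$. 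Minkowski's inequality applied to the $L^2$-seminorm appearing in \eqref{eq:uniformintegrability} shows the line segment still obeys the same $\delta$-bound in the limit, so we may estimate the derivative at $\g_t$ using that bound. The uniform lower bounds $|\D\g_t|,\,d_{\g_t}\ge c|w|$ and $|\dg_t|\in[c,C]$ on $\Upsilon$ handle all denominators. The first piece of \eqref{eq:FrechetDifferentiable} carries the chord--arc cancellation directly: expanding
\[ d_{\g_t}^2-|\D\g_t|^2=\int_0^w\!\!\int_0^w\!\!\big(|\dg_t(u+\sigma)||\dg_t(u+\sigma')|-\sp{\dg_t(u+\sigma),\dg_t(u+\sigma')}\big)\,\d\sigma\d\sigma'\le C\!\int_0^w\!\!\int_0^w\!\!|\dg_t(u+\sigma)-\dg_t(u+\sigma')|^2\,\d\sigma\d\sigma', \]
yields $|\D\g_t|^{-2}-d_{\g_t}^{-2}\le C|w|^{-4}$ times the right-hand side, which after Fubini and the rescaling $\sigma=tw$ is controlled by the restriction of the integral in \eqref{eq:uniformintegrability} to $|w|\le\eps^\star$. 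For the remaining two pieces, $-\sp{\D\g,\D\h}/|\D\g|^4$ and $d_\g^{-3}\,\partial_\tau d_{\g+\tau\h}|_{\tau=0}$, one \emph{combines} them before estimating: each is individually of order $|w|^{-2}$ near $w=0$, but their leading singularities cancel for arc-length $\g$ (a direct Taylor computation using \eqref{eq:dd} together with $\D\g=w\dg(u)+O(w^2)$, $\D\h=w\dh(u)+O(w^2)$). Taylor-expanding both about $w=0$, grouping the cancelling parts, and using Cauchy--Schwarz in $w$ to pair one factor $|\dg_t(u+w)-\dg_t(u)|/|w|$ with a factor controlled by $\Vrac\h_{\W\capH}$, one arrives at
\[ \big|\delta(\Ee[\eps_1]-\Ee[\eps_2])(\g_t;\h)\big|\le C(\gn)\Vrac\h_{\W\capH}\left(\iint_{(\R/\Z)\times\{|w|\le\eps^\star\}}\frac{|\dg_t(u+w)-\dg_t(u)|^2}{w^2}\,\d w\,\d u\right)^{\!1/2}. \]
As $\eps_1,\eps_2\to 0$, \eqref{eq:uniformintegrability} forces the right-hand side to $\le C(\gn)\delta\Vrac\h_{\W\capH}$, which is precisely \eqref{eq:CauchySequence2}.

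The main obstacle is the treatment of the last two pieces of \eqref{eq:FrechetDifferentiable}. Neither is individually $L^1$ near $w=0$ for generic $\g\in\Wir\capH$, and only their particular combination is manageable thanks to the chord--arc cancellation. One therefore has to Taylor-expand and regroup before estimating, and verify that all error terms remain uniform over $\g_t\in\Upsilon$. Managing this cancellation cleanly, without losing the sharp $\delta$-dependence and without sacrificing the uniformity in the neighbourhood of $\gn$, is the technical crux of the proof.
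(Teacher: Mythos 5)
Your route is genuinely different from the paper's. The paper never invokes the derivative formula of Lemma~\ref{lem:FrechetDifferentiable} here; it rewrites the integrand of $\Ee$ as the bounded Lipschitz factor $\G[\g]$ times a quantity that is \emph{manifestly} quadratic in the oscillations $|\dg(u+\theta_1 w)-\dg(u+\theta_2 w)|$, and then bounds $|(\Ee[\eps_1]-\Ee[\eps_2])(\tg)-(\Ee[\eps_1]-\Ee[\eps_2])(\g)|$ directly by comparing each factor. You go through the mean value theorem and estimate $\delta(\Ee[\eps_1]-\Ee[\eps_2])$ along the segment. Your pointwise-convergence argument, your treatment of the first piece of~\eqref{eq:FrechetDifferentiable}, the convexity of the $\Upsilon$ from Lemma~\ref{lem:Upsilon}, and the Minkowski step passing the bound~\eqref{eq:uniformintegrability} to the segment are all sound.

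The gap is in the combined estimate of the last two pieces of~\eqref{eq:FrechetDifferentiable}. First, a slip: the cancellation of the leading $O(|w|^{-2})$ terms holds for \emph{every} $\g\in\Wir\capH$, not only arc-length curves; your phrase ``for arc-length $\g$'' would be fatal because the $\g_t$ on the segment are generally not arc-length. More importantly, leading-order cancellation alone is not enough. If the remainder after cancelling the $w\to0$ limits were merely \emph{linear} in the oscillation of $\dg_t$ (times $\|\dh\|_\infty$), the resulting annular integral would behave like $\iint |w|^{-2}\,|\dg_t(u+\theta w)-\dg_t(u)|\,\d w\,\d u$, which diverges for $\dg_t\in H^{1/2}$. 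What your ``Taylor-expand, group, Cauchy--Schwarz'' sentence asserts but does not verify is that the first-order variation of the bracketed combination in the oscillation of $\dg_t$ is itself of covariance type --- vanishing also when $\dh$ is constant in the inner integration variable --- and is therefore \emph{bilinear} in the oscillations of $\dg_t$ and $\dh$, plus a term quadratic in the oscillation of $\dg_t$ times $\|\dh\|_\infty$. Only this second-order structure makes the integrand $O(|w|^{-2})$ times a product of two oscillations, hence integrable over the annulus, and delivers the $\delta\,\norm{\h}_{\W\capH}$ bound you state. Establishing it cleanly essentially amounts to differentiating the paper's identity that expresses $\frac{1}{|\D\g|^2}-\frac{1}{d_\g(u+w,u)^2}$ as $\G[\g]$ times a manifestly quadratic numerator over $w^2$, at which point you have rederived the paper's decomposition. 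As written, your argument only justifies a linear remainder and does not close the estimate.
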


\begin{proof}
 From Lemma~\ref{lem:Upsilon} we get a $\W \capH$-neighborhood
 $\Upsilon \subset \Wir \capH$ of $\gamma$ such that
 \eqref{eq:fundamentalEstimates} holds for all $\g\in\Upsilon$.
 Making $\Upsilon$ smaller if
 necessary, we may also assume the existence of an $\varepsilon_0 >0$ with
 \begin{equation*}
  d_{\g}(u+w,u) = \mathscr L (\g|_{[u,u+w]})
 \end{equation*}
 for all $\g \in \Upsilon$ and  $w \in [-\varepsilon_0, \varepsilon_0]$.

 \renewcommand{\gt}{\tg}
 \renewcommand{\dgt}{\tg'} 
 In order to bring the integrand in the definition of $\E$ and $\Ee$
 in a more convenient form we
 introduce the function
 \[ g(\zeta,\eta,\vartheta,\iota) := \frac{\zeta^{-2}-\eta^{-2}}{\eta^2-\zeta^2}\vartheta\iota \]
 which is Lipschitz continuous and positive on $[\tilde c,\infty)^4$ for any $\tilde c>0$.
 We define for $u\in\R/\Z$, $w\in[-\eps_0,\eps_0]$
 \begin{align*}
  \G[\g]:(u,w) &\mapsto g\br{\abs{\int_0^1\dg(u+\theta_1w)\d\theta_1},\int_0^1\abs{\dg(u+\theta_2w)}\d\theta_2,\abs{\dg(u+w)},\abs{\dg(u)}}.
 \end{align*}
 We have chosen $\Upsilon$ in such a way that the arguments in $\G[]$ are 
 uniformly bounded away from zero.
 Then we decompose the integrand in the definition of $\E$ for $|w| \leq \varepsilon_0$ into
 \begin{align*}
  &\br{\frac1{\abs{\D\g}^2}-\frac1{d_{\g}(u+w,u)^2}}\abs{\dg(u+w)}\abs{\dg(u)}\\
  &=\frac1{w^2}\br{\frac1{\abs{\int_0^1\dg(u+\theta_1w)\d\theta_1}^2}-\frac1{\br{\int_0^1\abs{\dg(u+\theta_2w)}\d\theta_2}^2}}\abs{\dg(u+w)}\abs{\dg(u)}\\
  &=\G[\g](u,w)\frac{\br{\int_0^1 \abs{\dg(u+\theta_2w)}\d\theta_2}^2 - \abs{\int_0^1\dg(u+\theta_1w)\d\theta_1}^2}{\abs w^{2}}  \\
  &=\G[\g](u,w)\frac{\iint_{[0,1]^2} \br{\abs{\dg(u+\theta_1w)}\abs{\dg(u+\theta_2w)} - \sp{\dg(u+\theta_1w),\dg(u+\theta_2w)}} \d\theta_1\d\theta_2}{\abs w^{2}}.
 \end{align*}
 Using $2\abs a\abs b-2\sp{a,b} = \abs{a-b}^2 - \abs{\abs a-\abs b}^2$ for $a,b\in\R^n$ 
 this can be written as
 \begin{align*}
  &\tfrac12 \G[\g](u,w)\frac{\iint_{[0,1]^2}\abs{\dg(u+\theta_1w)-\dg(u+\theta_2w)}^2\d\theta_1\d\theta_2}{w^2} \\
  &\quad{}-\tfrac12 \G[\g](u,w)\frac{\iint_{[0,1]^2}\br{\abs{\dg(u+\theta_1w)}-\abs{\dg(u+\theta_2w)}}^2\d\theta_1\d\theta_2}{w^2}.
 \end{align*}
 We first use this to get
 \begin{align*}
  \E(\gamma) - \Ee(\gamma) \leq C \int_{\mathbb R / \mathbb Z} \int_{-\varepsilon}^\varepsilon
  \G[\g](u,w)\frac{\iint_{[0,1]^2}\abs{\dg(u+\theta_1w)-\dg(u+\theta_2w)}^2\d\theta_1\d\theta_2}{w^2} 
  dw du  \\
  \leq C  \int_{\mathbb R / \mathbb Z} \int_{-\varepsilon}^\varepsilon
  \frac{\iint_{[0,1]^2}\abs{\dg(u+\theta_1w)-\dg(u+\theta_2w)}^2\d\theta_1\d\theta_2}{w^2} 
  dw du 
  \xrightarrow{\varepsilon \rightarrow 0} 0 
 \end{align*}
 which proves the pointwise convergence stated in the lemma.

 Let now $0<\varepsilon_1 < \varepsilon_2 < \varepsilon_0$ and set
 \[ F:= \E[\varepsilon_1] - \E[\varepsilon_2]. \]
 Using the decomposition of the integrand above, we get 
 \begin{align*}
  F(\g) &=  \tfrac12 \int_{\mathbb R / \mathbb Z}
  \int_{\varepsilon_1 < |w| < \varepsilon_2}\G[\g](u,w)\frac{\iint_{[0,1]^2}\abs{\dg(u+\theta_1w)-\dg(u+\theta_2w)}^2\d\theta_1\d\theta_2}{w^2}\d w\d u \\
  &\quad{}-\tfrac12  \int_{\mathbb R / \mathbb Z}
  \int_{\varepsilon_1 < |w| < \varepsilon_2}\G[\g](u,w)\frac{\iint_{[0,1]^2}\br{\abs{\dg(u+\theta_1w)}-\abs{\dg(u+\theta_2w)}}^2\d\theta_1\d\theta_2}{w^2}\d w\d u \\
  &=: \tfrac12 F_1^{(2)}(\g) - \tfrac12 F_2^{(2)}(\g).
 \end{align*}
 To estimate the difference $F(\gt)-F(\g)$ for $\g,\tg\in Y_{0}$, we first consider
 \begin{align*}
  &\abs{\G[\gt](u,w)-\G(u,w)} \\
  &\le C\abs{\abs{\int_0^1\dtg(u+\theta_1w)\d\theta_1}-\abs{\int_0^1\dg(u+\theta_2w)\d\theta_2}}
  \\
  & \quad{}+C\abs{\int_0^1\br{\abs{\dgt(u+\theta w)}-\abs{\dg(u+\theta w)}}\d\theta} \\
  &\quad{}+C\abs{\abs{\dgt(u+w)}-\abs{\dg(u+w)}} +C\abs{\abs{\dgt(u)}-\abs{\dg(u)}} \\
  &\le C\int_0^1\abs{\dgt(u+\theta w)-\dg(u+\theta w)}\d\theta +C\abs{{\dgt(u+w)}-{\dg(u+w)}} +C\abs{{\dgt(u)}-{\dg(u)}} \\
  &\le C\norm{\dgt-\dg}_{L^\infty}.
 \end{align*}
 We arrive at
 \begin{align*}
  &\abs{F_1^{(2)}(\gt)-F_1^{(2)}(\g)} \\
  &\le \iint\limits_{\R/\Z\times[-\eps_2,\eps_2]}\abs{\G[\gt](u,w)-\G(u,w)}\frac{\iint_{[0,1]^2}\abs{\dgt(u+\theta_1w)-\dgt(u+\theta_2w)}^2\d\theta_1\d\theta_2}{w^2}\d w\d u \\
  &\quad{}+\iint\limits_{\R/\Z\times[-\eps_2,\eps_2]}\abs{\G[\g](u,w)}\\
  & \qquad \qquad  \frac{\iint_{[0,1]^2}\abs{\abs{\dgt(u+\theta_1w)-\dgt(u+\theta_2w)}^2-\abs{\dg(u+\theta_1w)-\dg(u+\theta_2w)}^2}\d\theta_1\d\theta_2}{w^2} \\
  & \hspace{11.2cm}\d w\d u \\
  &\le C\norm{\dgt-\dg}_{L^\infty}\iint\limits_{[0,1]^2}\iint\limits_{\R/\Z\times[-\eps_2,\eps_2]}\frac{\abs{\dgt(u+\theta_1w)-\dgt(u+\theta_2w)}^2}{w^2}\d w\d u\d\theta_1\d\theta_2 \\
  &\quad{}+C\norm\dg_{L^{\infty}}\iint\limits_{[0,1]^2}\iint\limits_{\R/\Z\times[-\eps_2,\eps_2]} \\
  & \qquad  \frac{\abs{(\dgt+\dg)(u+\theta_1w)-(\dgt+\dg)(u+\theta_2w)}\abs{(\dgt-\dg)(u+\theta_1w)-(\dgt-\dg)(u+\theta_2w)}}{w^2} \\
  & \hspace{10.2cm}\d w\d u\d\theta_1\d\theta_2 \\
  &\le C\seminorm{\dgt}_{\W[1/2]_{\scriptstyle 2\eps_2}}^2\norm{\dgt-\dg}_{L^\infty}
    + C \norm\dg_{L^{\infty}}\seminorm{\dgt+\dg}_{\W[1/2]_{\scriptstyle 2\eps_2}}\seminorm{\dgt-\dg}_{\W[1/2]_{\scriptstyle 2\eps_2}}
 \end{align*}
 where we set for $\eps\in(0,\tfrac12)$
 \begin{equation*}
  \seminorm{f}_{\W[1/2]_{\scriptstyle\eps}}
  := \left( \iint_{\R/\Z\times[-\eps,\eps]} \frac{\abs{f(u+w)-f(u)}^2}{w^{2}} \d w\d u\right)^{1/2}.
 \end{equation*}
 For the second term we compute
 \begin{align*}
  &\abs{F_2^{(2)}(\gt)-F_2^{(2)}(\g)} \\
  &\le \! \! \iint\limits_{\R/\Z\times[-\eps_2,\eps_2]} \! \! \! \!\abs{\G[\gt](u,w)-\G(u,w)}\frac{\iint_{[0,1]^2} \!\! \br{\abs{\dgt(u+\theta_1w)}-\abs{\dgt(u+\theta_2w)}}^2\d\theta_1\d\theta_2}{w^2}\d w\d u \\
  &\qquad{}+\iint\limits_{\R/\Z\times[-\eps_2,\eps_2]}\abs{\G[\g](u,w)} \cdot\\
  &\qquad \cdot\frac{\iint_{[0,1]^2}\abs{\br{\abs{\dgt(u+\theta_1w)}-\abs{\dgt(u+\theta_2w)}}^2-\br{\abs{\dg(u+\theta_1w)}-\abs{\dg(u+\theta_2w)}}^2}\d\theta_1\d\theta_2}{w^2}\\
  & \hspace{11.2cm} \d w\d u \\
  &\le C\norm{\dgt-\dg}_{L^\infty}\iint\limits_{[0,1]^2}\iint\limits_{\R/\Z\times[-\eps_2,\eps_2]}\frac{\abs{\dgt(u+\theta_1w)-\dgt(u+\theta_2w)}^2}{w^2}\d w\d u\d\theta_1\d\theta_2 \\
  &\quad{}+C\norm\dg_{L^{\infty}}\iint\limits_{[0,1]^2}\iint\limits_{\R/\Z\times[-\eps_2,\eps_2]}\frac{\abs{\br{\abs{\dgt(u+\theta_1w)}-\abs{\dgt(u+\theta_2w)}}+\br{\abs{\dg(u+\theta_1w)}-\abs{\dg(u+\theta_2w)}}}}{\abs w}\cdot \\
  &\qquad\quad{}\cdot\frac{\abs{\br{\abs{\dgt(u+\theta_1w)}-\abs{\dgt(u+\theta_2w)}}-\br{\abs{\dg(u+\theta_1w)}-\abs{\dg(u+\theta_2w)}}}}{\abs w}\d w\d u\d\theta_1\d\theta_2 \\
  &\le C\norm{\dgt-\dg}_{L^\infty}\seminorm{\dgt}_{\W[1/2]_{\scriptstyle 2\eps_2}}^2
      + C\norm\dg_{L^{\infty}}\seminorm{\abs\dgt+\abs\dg}_{\W[1/2]_{\scriptstyle 2\eps_2}}\seminorm{\abs\dgt-\abs\dg}_{\W[1/2]_{\scriptstyle 2\eps_2}}.
 \end{align*}
 Using the chain and product rule for Sobolev spaces and the formula
 \[
  \abs\dgt-\abs\dg 
  = \frac{\sp{\dgt+\dg,\dgt-\dg}}{\abs\dgt+\abs\dg},
 \]
 we obtain, assuming $\eps_{0}<\tfrac14$, for $C>0$ depending on
 the constant from~\eqref{eq:fundamentalEstimates}
 \begin{align*}
  \seminorm{\abs\dgt-\abs\dg}_{\W[1/2]_{\scriptstyle 2\eps_2}} 
  &\le \seminorm{\abs\dgt-\abs\dg}_{\W[1/2](\R/\Z)} \\
  &\le C \norm{\dgt-\dg}_{\W[1/2]\cap L^{\infty}}
  \norm{\dgt+\dg}_{\W[1/2]\cap L^{\infty}}\br{1+\seminorm\tdg_{\W[1/2]_{\scriptstyle 2\eps_2}}+\seminorm\dg_{\W[1/2]_{\scriptstyle 2\eps_2}}}
  \end{align*}
 where
 \[ \norm\cdot_{\W[1/2]\cap L^{\infty}} := \norm\cdot_{\W[1/2]}+\norm\cdot_{L^{\infty}}. \]
 Hence
 \begin{align*}
    \abs{F_2^{(2)}(\gt)-F_2^{(2)}(\g)}
    &\le C \Bigg(\seminorm{\dgt}_{\W[1/2]_{\scriptstyle 2\eps_2}}^2 +
    \norm\dg_{L^{\infty}}\br{\seminorm\tdg_{\W[1/2]_{\scriptstyle 2\eps_2}}+\seminorm\dg_{\W[1/2]_{\scriptstyle 2\eps_2}}}
    \norm{\dgt+\dg}_{\W[1/2]\cap L^{\infty}}\cdot \\
    &\qquad\qquad\qquad\qquad{}\cdot\br{1+\seminorm\tdg_{\W[1/2]_{\scriptstyle 2\eps_2}}+\seminorm\dg_{\W[1/2]_{\scriptstyle 2\eps_2}}}
     \Bigg)
    \|\dgt-\dg\|_{\W[1/2]\cap L^{\infty}}.
 \end{align*}
 The claim follows from
 \[ \limsup_{\eps_{2}\searrow0}\sup_{\tg\in X_{\delta}\cap Y}\seminorm{\dtg}_{\W[1/2]_{\scriptstyle2\eps_{2}}} \le \delta. \]
\end{proof}

\begin{proof}[Proof of Proposition~\ref{prop:formula}]
 In order to prove that \emph{directional derivatives} exist at $\gn\in\Hir\capH$ for all directions $\h\in\W\capH$
 let $\Upsilon$ be as in the proof of Lemma~\ref{lem:CauchySequence} and
 \begin{equation*}
  X_0 := \sett{ \gn + \tau\h}{\tau \in (-1,1)}.
 \end{equation*}
 First we observe that $X_0$ satisfies~\eqref{eq:uniformintegrability}, thus being an admissible set
 for Lemma~\ref{lem:CauchySequence}. Indeed,
 for $\gt := \gn +\tau\h$, $|\tau| \leq 1$,
 \begin{align}
  &\Bigg( \iint_{\mathbb R / \mathbb Z \times [-\eps, \eps]} 
  \frac {|\g_\tau'(u+w) - \g_\tau'(u)|^2}{|w|^2} \d w\d u \Bigg)^{1/2}  \nonumber \\
  &\leq \left( \iint_{\mathbb R / \mathbb Z \times [-\eps, \eps]}
  \frac {|\dg_{0}(u+w) - \dg_{0}(u)|^2}{|w|^2} \d w \d u \right)^{1/2}
  +  \left( \iint_{\mathbb R / \mathbb Z \times [-\eps, \eps]}
  \frac {|\dh(u+w) - \dh(u)|^2}{|w|^2} \d w \d u \right)^{1/2} 
  \nonumber\\
  &\xrightarrow{\eps\searrow0} 0.\label{eq:X0}
 \end{align}
 From this we deduce,
 for
 \[ f_\eps : \tau\mapsto \Ee(\g_{0}+\tau\h), \]
 that
 \begin{align}
  &\abs{f_{\eps_1}'(\tau)-f_{\eps_2}'(\tau)}
  =\abs{\delta\Ee[\eps_1](\g_0+\tau \h;\h)-\delta\Ee[\eps_2](\g_0+\tau \h;\h)} \nonumber\\
  &\le\limsup_{\theta\to0}\abs{\frac{\Ee[\eps_1](\g_0+(\tau+\theta)\h)-\Ee[\eps_1](\g_0+\tau \h)}{\theta}
   - \frac{\Ee[\eps_2](\g_0+(\tau+\theta)\h)-\Ee[\eps_2](\g_0+\tau \h)}{\theta}} \nonumber\\  
  &\le \lip_{X_0\cap\Upsilon}\br{\Ee[\eps_1]-\Ee[\eps_2]}\norm \h_{\W\cap\W[1	,\infty]}\label{eq:f-f}\\
  &\xrightarrow{\eps_1,\eps_2\searrow0}0\qquad\text{by \eqref{eq:CauchySequence2}.}\nonumber
 \end{align}
 As $\Ee\to\E$ pointwise, this proves that $\seq{f_\eps}$
 is a Cauchy sequence in $C^1((-\tau_0, \tau_0))$ converging to $\E(\g_0 + \tau \h)=\lim_{\eps\searrow0}\Ee(\g_0+\tau \h)$ as 
 $ \varepsilon \rightarrow 0$.
 Hence, especially all directional derivatives of $\E$ exist and
 \begin{equation*}
  \delta \E (\g_0;\h) = \lim_{\varepsilon \searrow 0} \delta \Ee (\g_0;\h)
 \end{equation*}
 for all $\g_0 \in \Wir\cap\W[1,\infty]$, $\h \in \W\cap\W[1,\infty]$.
 
  The next step is to establish \emph{G\^ateaux differentiability}. To this end we merely have to show
 $\delta\E(\g_0,\cdot)\in\br{\W\cap\W[1,\infty]}^*$ for $\g_0\in\Wir\cap\W[1,\infty]$. Linearity carries over from $\Ee$.
 In order to prove boundedness we introduce
 \begin{equation}\label{eq:X_delta}
  X_\delta := \{ \g \in \W : \|\g - \gn\|_{\W} \leq \delta\}\qquad\text{for }\delta\in(0,1]
 \end{equation}
 which also satisfies~\eqref{eq:uniformintegrability} as
 for $\g \in X_\delta$ we have
 \begin{equation}\label{eq:Xdelta}
 \begin{split}
  &\Bigg( \iint_{\mathbb R / \mathbb Z \times [-\eps, \eps]}
  \frac {|\dg(u+w) - \dg(u)|^2}{|w|^2} \d w \d u \Bigg)^{1/2} \\
  &\leq \left( \iint_{\mathbb R / \mathbb Z \times [-\eps, \eps]}
  \frac {|\dgn(u+w) - \dgn(u)|^2}{|w|^2} \d w \d u \right)^{1/2}
 +  \delta \xrightarrow{\eps\searrow0} \delta.
 \end{split}  
 \end{equation}
 Now 
 \begin{align*}
  \delta \E(\gn;\h) 
  &= \delta \Ee(\gn;\h) + \delta \E(\gn;\h) - \delta \Ee(\gn;\h)
  \\
  &= \delta \Ee(\gn;\h) + \lim_{\varepsilon_1 \searrow0} 
   \br{\delta \E[\varepsilon_1] (\gn;\h) - \delta \Ee(\gn;\h)}
 \end{align*}
 and thus, arguing as in~\eqref{eq:f-f} and recalling $\delta\Ee(\g_0;\cdot)\in\br{\W\capH}^*$,
\begin{equation*}
\begin{aligned}
  | \delta \E(\gn;\h) |& \leq | \delta \Ee(\gn;\h) |
  +  \underbrace{\limsup_{\eps_1\searrow0}\lip_{X_\delta\cap\Upsilon}\br{\Ee[\eps_1]-\Ee[\eps]}}_{<\infty} \|\h\|_{\W\cap\W[1,\infty]}
  \leq C \|\h\|_{\W\cap\W[1,\infty]}
\end{aligned}
\end{equation*}
 for all $\gn \in \Wir \capH$ and $\h \in \W \capH$. Hence, $\E$ is G\^ateaux differentiable  and the 
differential is given by
\begin{equation*}
 \left(\E\right)'(\gn) = \delta \E(\gn;\cdot)
\end{equation*}
for all $\gn \in \Wir \capH$, $\h \in \W \capH$.

Finally, to see that the differential is \emph{continuous}, let $\sigma >0$ be given and let us choose
$\delta>0$ and $\varepsilon>0$ so small that
\begin{equation*}
 \lip_{X_\delta\cap\Upsilon}\br{\Ee[\eps_1]-\Ee[\eps_2]} \refeq[\le]{CauchySequence2} C \delta \leq \tfrac13 \sigma
\end{equation*}
for all $\varepsilon_1, \varepsilon_2 < \eps$ where $X_\delta$ is as in~\eqref{eq:X_delta}. Then we
have for $\g \in X_\delta \cap\Upsilon$ and any $\h\in\W\capH$
\begin{align*}
 |\delta E^{(2)}(\g;\h) - \delta E^{(2)}(\gn;\h)|
 &\leq |\delta \E(\g;\h) - \delta \Ee(\g;\h)| + 
  |\delta \Ee(\g;\h) - \delta \Ee(\gn;\h)| 
 \\ 
  &\quad + |\delta \Ee(\gn;\h) - \delta \E(\gn;\h)|
 \\
  &\refeq[\le]{f-f}  |\delta \Ee(\g;\h) - \delta \Ee(\gn;\h)| +  \tfrac23 \sigma \norm\h_{\W\capH}.
\end{align*} 
Since $\Ee$ is $C^1$ we deduce that there is an open neighborhood $V\subset X_\delta$
of $\gn$ such that 
\begin{equation*}
 |\delta \Ee(\g;\h) - \delta \Ee(\gn;\h)| \leq \tfrac13\sigma \norm\h_{\W\capH}
\end{equation*}
and hence
\begin{equation*}
  |\delta \E(\g;\h) - \delta \E(\gn;\h)| \leq \sigma \norm\h_{\W\capH}
\end{equation*}
for all $\g\in V$.
This proves that $\left(\E \right)'$ is continuous from $\Wir\capH$ into $\left( \W \capH \right)^\ast$ and
hence $\E$ is $C^1(\Wir$ $\capH)$.
\end{proof}

\begin{proof}[Proof of Theorem~\ref{thm:EulerLagrangeEquation}]
 Using Proposition~\ref{prop:formula} we merely have to derive the formula of the first variation
 for a curve $\g \in \Wir\capH$ parametrized by arc-length
 and $\h \in \W\capH$.
 As $\abs\dg\equiv1$ a.~e.\@ we deduce from Lemma~\ref{lem:FrechetDifferentiable} 
 and Equation~\eqref{eq:dd}
 \begin{align*}
  \delta\E(\g;\h) &\xleftarrow{\varepsilon \searrow 0} 2\iint_{U_\varepsilon}
\Bigg\{\left(\frac 1 {|\g(u+w) - \g(u)|^2} - \frac 1 {w^2} \right)
  \left\langle \dg(u), \dh(u) \right\rangle  \\
  &\qquad {}- 2  \left(\frac{\left\langle \D\g ,\D \h \right\rangle }{|\D\g|^{4}}
    -\frac{\left.\tfrac{\d}{\d\tau}\right|_{\tau=0}d_{\g+\tau\h}(u+w,u)}{|w|^{3}}\right)\Bigg\}
  \d w\d u 
 \end{align*}
 where now 
 \begin{equation*}
  \left.\tfrac{\d}{\d\tau}\right|_{\tau=0}d_{\g+\tau\h}(u+w,u) = |w|\int_0^1 \left\langle \dg(u+ \theta w), \dh(u+\theta w) \right\rangle d\theta
 \end{equation*}
 for all $(u,w) \in \mathbb R / \mathbb Z \times (-\tfrac12, \tfrac12)$. Hence,
 \begin{align*}
  &\delta\E(\g;\h) 
  \xleftarrow{\varepsilon \searrow 0}2\iint_{U_\varepsilon}  \Bigg\{
    \left(\frac 1 {|\g(u+w) - \g(u)|^2} - \frac 1 {w^2} \right) 
    \left\langle \dg(u), \dh(u) \right\rangle \\
    &\quad{} -   \left(\frac{\left\langle \D\g ,\D \h \right\rangle }{|\D\g|^{4}}
    -\frac{\int_0^1 \left\langle \dg(u+\theta w ), \dh(u+\theta w) \right\rangle d\theta}
    {w^2}\right) \Bigg\}\d w\d u \\
  & = 2\iint_{U_\varepsilon} \Bigg\{
    \left(\frac 1 {|\g(u+w) - \g(u)|^2} - \frac 1 {w^2} \right) 
    \left\langle \dg(u), \dh(u) \right\rangle \\
    &\qquad\qquad{} - \left(\frac{\left\langle \D\g ,\D \h \right\rangle }{|\D\g|^{4}}
    -\frac{\left\langle \dg(u), \dh(u) \right\rangle}{w^2}\right) \Bigg\}\d w\d u\\
& = 2\iint_{U_\varepsilon} 
  \br{\frac  {\left\langle \dg(u), \dh(u) \right\rangle} {|\g(u+w) - \g(u)|^2}
            - \frac{\left\langle \D\g ,\D \h \right\rangle }{|\D\g|^{4}} } \d w\d u.
\end{align*}
\end{proof}

\section{Initial regularity: Proof of Theorem \ref{th:reg:1alpha}}\label{sec:initial}
Note that if we
consider the constant factors to be irrelevant with respect to the mathematical argument, for the sake of simplicity
we will omit them in the calculations, writing $\aleq , \ageq , \aeq$ instead of $\leq, \geq$ and $=$.

Most techniques for dealing with critical partial differential equations 
of fractional order have been developed for equations on the whole Euclidean space. For that reason, we
prefer working on the real line over working on the circle.

We will show that for every $u \in \mathbb R / \mathbb Z$ we have
\begin{equation*}
 \lapv g' \in L^{p} ((u-1/20,u+1/20))
\end{equation*}
for a $p>2$. Due to the invariance of the problem under shifting the parametrization, it is enough to show this 
for $u=1/2$, i.e.
\begin{equation} \label{eq:NeedToProveRegOnlyLocally}
 \lapv g' \in L^{p} ((9/20,11/20)).
\end{equation}

Let $\laps{s} \equiv (- \partial_x^2)^{s/2} \equiv (-\lap)^{\frac{s}{2}}$ be the fractional Laplacian on $\R$. The inverse, $\lapms{s} \equiv (-\lap)^{-\frac{s}{2}}$, $s \in (0,1)$, is the Riesz potential by
\begin{equation}\label{eq:fraclap-}
 \lapms{s} f(x) = c_s \intl_{\R} \frac{f(y)}{\abs{x-y}^{1-s}}\ dy.
\end{equation}
In case of positive powers $s$ of the Laplacian $\laps{s}$, $s\in(0,1)$, we use the corresponding formula
\begin{equation}\label{eq:fraclap+}
 \laps{s} f(x) = \tilde c_s \intl_{\R} \frac{f(y)-f(x)}{\abs{x-y}^{1+s}}\ dy.
\end{equation}
For a detailed introduction to the fractional Laplacian we refer to, e.g., \cite{DiNezza}, \cite[Section~2.5]{Schikorra-Doktor}.

To switch from the circle to the real line, we interpret
functions on $\mathbb R / \mathbb Z$ as functions on $\mathbb R$ which are periodic 
with period $1$. We then choose a cutoff function $\eta \in C_0^\infty(\R)$, 
$\eta \equiv 1$ on $[-3,4]$ and consider
\begin{equation}\label{eq:g=g}
  g = \eta \cdot \gamma
\end{equation}
instead of $\gamma$.
We will show that $\lapv g' \in L^p((17/40,23/40),\R^\tdim)$,
for some $p > 2$ to conclude the proof.

Before we begin to outline the structure of the proof, let us shortly recapitulate the notion of 
Lorentz spaces and the main properties we are going to use in this article. For a measurable 
function $f: \Omega \rightarrow \mathbb R$ and $\Omega \subset \mathbb R$
one considers the \emph{decreasing rearrangement}
\begin{equation*}
 f^\ast (t):= \inf\left\{s>0: {\mathcal L}^1(\{|f|> s\})\leq t\right\}
\end{equation*}
where $\mathcal L^1$ denotes the Lebesgue measure. We define
\begin{equation*}
 |f|_{(p,q),\Omega} := \begin{cases}
                      \left(\int_0^\infty (t^{1/p} f^\ast (t)) ^q \frac {dt}{t}\right)^{1/q}  
			&\text{if } p,q \in[1, \infty),\\
                      \sup_{t>0} t^{1/p} f^\ast (t)&\text{if } q=\infty.
                     \end{cases}
\end{equation*}

To prevent technical problems, unless $p \in (1,\infty)$
we will only take the spaces  $L^{1,1} = L^1$ and $L^{\infty,\infty} =  L^\infty$
into consideration.

Furthermore, $\abs f_{(p,p)}\approx\norm f_{L^p}$ for all $p\in[1,\infty]$.
If $\Omega = \mathbb R$ we will omit $\Omega$ in the notation.
Though $|\cdot|_{(p,q), \Omega}$ is not a norm, as it does not obey the triangle inequality, there
is a norm $\|\cdot\|_{(p,q), \Omega}$ on the Lorentz spaces which is equivalent to $|\cdot|_{(p,q),\Omega}$.
These norms satisfy a \emph{H\"older inequality}, i.e., for $p_1,p_2,p \in [1, \infty)$
and $q_1,q_2,q \in [1,\infty]$ with $1/p_1 + 1/p_2 = 1/p$ and $1/q_1 + 1/q_2 = 1/q$ we
have 
\begin{equation}\label{eq:hoelder}
  \|fg\|_{(p,q), \Omega} \aleq \|f\|_{(p_1,q_1), \Omega}\ \|g\|_{(p_2,q_2), \Omega}.
\end{equation}
For $p_1, p_2, p \in (1,\infty)$ and $q_1,q_2 \in [1, \infty]$
with $1/p_1 + 1/p_2 = 1 /p + 1$ and $1/q_1 +1/q_2 = 1/q$ we have the \emph{Young-O'Neil inequality} \cite{Hunt66}
\begin{equation}\label{eq:young}
 \|f \ast g\|_{(p,q), \Omega} \aleq \|f\|_{(p_1,q_1),\Omega}\; \|g\|_{(p_2,q_2), \Omega}.
\end{equation}
Furthermore, we have the \emph{Sobolev inequality} 
\begin{equation*}
 \|\lapms{s} f\|_{(p^\ast,q)} \aleq \|f\|_{(p,q)}
\end{equation*}
for all $s\ge0$, $p\in [1,\infty), q \in [1, \infty]$ and $p^\ast :=  \frac {p}{1-sp} \in [1,\infty)$.
Further information and proofs can be found in~\cite{Hunt66,GrafakosC,Tartar2007}.

The main reason for using Lorentz spaces in the context of critical equations, i.e. equations to which
standard Gagliardo-Nirenberg-Sobolev embeddings cannot be applied to gain regularity, is the following fact.
Although for functions $f$ the $L^2$-norm of $\lapv  f$ does not control the 
$L^\infty$-norm of $f$, the $L^{2,1}$ norm does, i.e. we have the estimate
\begin{equation*}
  \|f\|_{\infty} \aleq \|\lapv   f\|_{(2,1)}.
\end{equation*}
We will also need this in the more general form
\begin{equation*}
 \|f\|_{\infty} \aleq \|\laps{s} f\|_{(\frac 1 s,1)} \quad \mbox{for all $s\in(0,1)$.}
\end{equation*}
In order to prove our regularity result, we will prove a Dirichlet growth theorem for the 
weak $H^{1/2}$-energy of $\gamma'$ on balls in a manner comparable to \cite{DR11Sphere,SchikorranharmSphere}
 -- which are as well in the setting of sphere-valued mappings.
In contrast to these papers, the techniques from \cite{DR11Man,DL11MannD,SchikorraIntBoundFrac11}
deal with a more general setting, but have to work (as we will here) with estimates
of the $L^{2,\infty}$-norm instead of the $L^2$-norm.
Note nevertheless that our right-hand side is very different from their's. In order to obtain the estimates
of the norms $\vrac{\lapv \dsolnt}_{(2,\infty)}$ on small balls, we will have to use new
arguments.

To prove the regularity theorem, we begin with an approach appearing in \cite{DR11Sphere, SchikorranharmSphere} and
divide $\lapv g'$ into the part parallel to $g'$ (and thus \emph{normal} to the sphere $\S^{n-1}$) and
the term perpendicular to $g'$ (\emph{tangential} to the sphere). More precisely, we use that for $p \in [1,\infty)$, $q \in [1,\infty]$
\begin{equation}\label{eq:differencegtanggorth}
 \vrac{\lapv \dsolnt}_{(p,q),B_r} \aleq \vrac{\sbrac{\dsolnt,\lapv \dsolnt}}_{(p,q),B_r} 
  + \sup_{\omega_{ij}} \vrac{\dsolnt_j \omega_{ij}\lapv \dsolnt_i}_{(p,q),B_r},
\end{equation}
where the supremum is over $\omega_{ij} = -\omega_{ji} \in\set{-1,0,1}$.
For a detailed version of this linear algebraic fact, the interested reader is referred to the appendix of \cite{DLSnalphaSphere}.

\subsection{Estimate of the normal part}

We have
\begin{equation}\label{eq:orthogpartHdec}
\sbrac{\dsolnt, \lapv \dsolnt} = -\fracm{2} H_{\frac{1}{2}}(\dsolnt,\dsolnt) + \fracm{2} \lapv \abs{\dsolnt}^{2},
\end{equation}
where
\begin{equation}\label{eq:H}
 H_{s}(a,b) := \laps{s}(ab) -  a\ \laps{s} b -  b\ \laps{s} a.
\end{equation}
Note that for any $s \in (0,1)$, we have
\begin{equation}\label{eq:Dg2}
\laps{s} \sabs{\dsolnt}^{2} \ \refeq{g=g} \ \laps{s} {\eta}^{2} + 2 \laps{s} ({\eta}' \g' g)
+ \laps{s}(|{\eta}'|^2 |\g|^2)
\in L^{\infty}((0,1)). 
\end{equation}
In fact, $\laps{s}\eta^2\in L^\infty$ by interpolation inequalities. For the remaining terms
we use the quasi-locality, Lemma~\ref{lem:QuasiLocality}, and the support of $\eta$ and $\eta'$.

As in \cite{SchikorraIntBoundFrac11} we will use pointwise estimates for $H_s$ and some quantitative version of
the quasi-locality to estimate the normal part of 
$\lapv   g'$:  

\begin{lemma}[Normal part]\label{la:tangentialpart}
For any $s \in [0,\fracm{2})$ there exists $\theta > 0$ such that for any $B_r \subset [0,1]$, $\Lambda > 4$
\begin{equation}\label{eq:orthogpartest}
 \vrac{\laps{s}\sbrac{ \dsolnt, \lapv \dsolnt }}_{(\frac{2}{1+2s},\infty),B_r} 
  \aleq \vrac{\lapv \dsolnt}_{(2,\infty),B_{\Lambda r}}^2 + \ \Lambda^{-\theta} \ \vrac{\lapv \dsolnt}_{(2,\infty),\R}\ 
  \ \sum_{k=1}^\infty 2^{-\theta k}\ \vrac{\lapv \dsolnt}_{(2,\infty),B_{2 ^k\Lambda r}}+r^{\frac12+s}.
\end{equation}
\end{lemma}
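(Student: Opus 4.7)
The starting point is the decomposition \eqref{eq:orthogpartHdec}, which gives
\[
 \laps{s}\sbrac{\dsolnt, \lapv \dsolnt} = -\fracm{2}\,\laps{s} H_{\frac12}(\dsolnt,\dsolnt) + \fracm{2}\,\laps{s+\frac12}\abs{\dsolnt}^2,
\]
so it suffices to control the two summands separately on $B_r$.

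The plan is to dispose of the second summand first, since it is responsible for the harmless error term $r^{1/2+s}$. Because $|\dsolnt|^2 = \eta^2 + 2\eta\eta'\sbrac{\g,\g'} + (\eta')^2|\g|^2$ by \eqref{eq:g=g} and $|\g'|\equiv 1$ almost everywhere, the piece $\laps{s+\frac12}\eta^2$ is bounded on $\R$ by standard interpolation, while the remaining pieces have integrand supported in $\{\eta'\neq 0\}$, which lies at a positive distance from $B_r\subset[0,1]$. Applying the quasi-locality Lemma~\ref{lem:QuasiLocality} on this disjoint-support configuration yields $\laps{s+\frac12}|\dsolnt|^2\in L^\infty(B_r)$ with a bound independent of $r$; the Lorentz embedding $L^\infty(B_r)\hookrightarrow L^{(\frac{2}{1+2s},\infty)}(B_r)$ then contributes exactly $r^{\frac{1+2s}{2}} = r^{\frac12+s}$.

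For the main term $\laps{s} H_{1/2}(\dsolnt,\dsolnt)$ I would use the integral representation
\[
 H_{\frac12}(\dsolnt,\dsolnt)(x) \aeq \intl_{\R}\frac{(\dsolnt(y)-\dsolnt(x))\otimes(\dsolnt(y)-\dsolnt(x))}{\abs{x-y}^{3/2}}\,dy,
\]
together with the pointwise/distributional estimate for the composition $\laps{s}\circ H_{1/2}$ developed in \cite{SchikorraIntBoundFrac11}. The idea is to reinterpret $\laps{s} H_{1/2}$ as a bilinear form of type $H_{\frac12+s}$-like kernel, which admits an estimate of the form
\[
 \abs{\laps{s} H_{\frac12}(f,f)(x)} \aleq \intl_{\R} \frac{\abs{f(y)-f(x)}^{2}}{\abs{x-y}^{\frac32+s}}\,dy
\]
(modulo commutator tails that are themselves manageable). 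The crucial step is then the dyadic splitting of this integral about $B_{\Lambda r}$: the near part $\{|y-x|\le \Lambda r\}$ is controlled by $\vrac{\lapv \dsolnt}_{(2,\infty),B_{\Lambda r}}^2$ via the Young--O'Neil inequality~\eqref{eq:young} applied to $f\mapsto \lapms{1/2}$ of the indicator-restricted fractional difference quotient; the far part is partitioned into dyadic annuli $B_{2^{k+1}\Lambda r}\setminus B_{2^k\Lambda r}$ and each annulus contributes a factor $2^{-\theta k}\Lambda^{-\theta}\vrac{\lapv \dsolnt}_{(2,\infty),B_{2^k\Lambda r}}\vrac{\lapv \dsolnt}_{(2,\infty),\R}$, the gain $2^{-\theta k}\Lambda^{-\theta}$ arising from the extra negative power $|x-y|^{-(\frac12+s+\theta)}$ available for some small $\theta>0$ (here we use $s<\fracm2$). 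Summing the dyadic pieces gives the stated series.

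The hard part is the second step: extracting \emph{quantitative} quasi-locality tail decay $\Lambda^{-\theta}\,2^{-\theta k}$ from $\laps{s} H_{1/2}$ while landing precisely in the Lorentz space $L^{(\frac{2}{1+2s},\infty)}(B_r)$. This requires carefully balancing the scaling of the kernel against the Lorentz-space H\"older inequality so that two factors of $\lapv \dsolnt\in L^{(2,\infty)}$ combine to give $L^{(1,\infty)}$, which must then be absorbed into $L^{(\frac{2}{1+2s},\infty)}(B_r)$ via the measure-factor $r^{\frac12+s}$ built into the Lorentz norm on $B_r$. All remaining estimates are bookkeeping of Lorentz H\"older and Sobolev inequalities.
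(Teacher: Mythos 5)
Your treatment of the $\laps{s+\frac12}\abs{g'}^2$ piece is exactly the paper's: \eqref{eq:Dg2} plus quasi-locality on the $\eta'$-supported terms gives $L^\infty$ on $[0,1]$, and the Lorentz embedding $L^\infty(B_r)\hookrightarrow L^{(\frac{2}{1+2s},\infty)}(B_r)$ produces $r^{\frac12+s}$. The gap is in the $H$-term.

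Your starting identity $\laps{s}H_{1/2}(f,f)\approx H_{s+1/2}(f,f)$ ``modulo commutator tails'' is precisely where the paper has to work, and those tails are not a footnote. The paper decomposes
\[
\laps{s}H_{\frac12}(a,b)=\underbrace{H_{s+\frac12}(a,b)}_{I}+\underbrace{a\laps{s+\frac12}b-\laps{s}\brac{a\lapv b}}_{II}+\underbrace{b\laps{s+\frac12}a-\laps{s}\brac{b\lapv a}}_{III},
\]
and the terms $II,III$ require their own pointwise estimates (derived from the potential representation of $\laps{s}$ together with Lemma~\ref{lem:MultiplierEstimate}) before they can be fed into the quasi-locality machine. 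Dismissing them as ``manageable'' leaves roughly half the proof open. Moreover, the pointwise bound you propose, $\abs{\laps{s}H_{1/2}(f,f)(x)}\aleq\int\abs{f(y)-f(x)}^2\,\abs{x-y}^{-\frac32-s}\d y$, is only the raw kernel formula for $H_{s+1/2}$ (so again it does not cover $II,III$), and even for the $I$ piece it is the wrong intermediate: it does not express anything in terms of $\lapv g'$, which is the quantity you need on the right-hand side of \eqref{eq:orthogpartest}. The paper's Lemma~\ref{lem:EstimateForH12s} produces the refined pointwise bound $\abs{H_{s+1/2}(a,b)}\aleq$ (products of $\lapms{\cdot}\abs{\lapv a}$, $\lapms{\cdot}\abs{\lapv b}$ at well-chosen exponents), obtained by writing $a=\lapmv\lapv a$ and using Lemma~\ref{lem:MultiplierEstimate} on the kernel difference $\abs{y-\xi}^{-1/2}-\abs{x-\xi}^{-1/2}$; only then can the dyadic/quasi-locality machinery of Lemma~\ref{lem:LowerOrderProducts.v2} be invoked to extract the $\Lambda^{-\theta}2^{-\theta k}$ decay in the stated Lorentz norms. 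Finally, your appeal to Young--O'Neil on the near part is misplaced: \eqref{eq:young} is for genuine convolutions, and the bilinear kernel $(f(y)-f(x))^2\abs{x-y}^{-\frac32-s}$ is not a convolution; the correct device is the pointwise reduction to Riesz potentials of $\abs{\lapv g'}$ followed by Lorentz--H\"older and Sobolev inequalities, which is what Lemma~\ref{lem:LowerOrderProducts.v2} packages.
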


For the readers' convenience, a proof will be given in the appendix.

\subsection{Estimate of the tangential part}

It then remains to estimate the part normal to $g'$ (tangential to the sphere), i.e.  for $\omega_{ij} = - \omega_{ji} \in \set{-1,0,1}$, 
$1 \leq i,j \leq n$ we need to estimate 
suitable norms on small balls of the term $\dsolnt_j \omega_{ij} \lapv \dsolnt_i$. 

We have
\begin{align}
\intl_{\R} \dsolnt_j \omega_{ij} \lapv \dsolnt_i\ \lapv \varphi &=
 \intl_{\R} \lapv \dsolnt_i\ \lapv \brac{\omega_{ij} \dsolnt_j \varphi} 
  -\intl_{\R} \lapv \dsolnt_i\ \omega_{ij} \lapv \dsolnt_j\ \varphi 
  -\intl_{\R} \lapv \dsolnt_i\ \omega_{ij} H_{1/2}(\dsolnt_j,\varphi) \nonumber\\
 & = \intl_{\R} \lapv \dsolnt_i\ \lapv \brac{\omega_{ij} \dsolnt_j \varphi} 
  -\intl_{\R} \lapv \dsolnt_i\ \omega_{ij} H_{1/2}(\dsolnt_j,\varphi)\label{eq:antisympde}
\end{align}
where we have used that due to $\omega_{ij} = - \omega_{ji}$ the second term on the right-hand side of the first line
vanishes. 

The second term can be estimated analogously to similar terms in \cite{DR11Man,DL11MannD,SchikorraIntBoundFrac11} using again quasi-locality together with Sobolev embeddings.

\begin{lemma}[Tangential part] \label{lem:CriticalTerm}
There is $\theta >0$, $s_0\in(0,\frac12)$ such that for all 
$\phi \in C_0^\infty(B_r)$, $\Lambda > 16$, $s \in [0,s_0)$  
we have
\begin{multline*}
 \int \lapv   g_i'\omega_{ij} H_{1/2}(g_j',\phi) dx \\\aleq 
  \|\Dsymbol^{-s+1/2}\phi\|_{(\frac 2 {1-2s},1)}
  \left(\|\lapv  g'\|_{(2,\infty),B_{\Lambda r}}^2 + \Lambda^{-\theta} \|\lapv  g'\|_{(2,\infty),\R}
    \sum_{k=1}^\infty 2^{-\theta (k-1)} \|\lapv   g'\|_{(2,\infty), B_{2^k\Lambda r}} \right).
\end{multline*}
\end{lemma}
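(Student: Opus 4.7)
The plan is to treat $H_{1/2}(g_j',\phi)$ as a commutator object that, because $\phi$ has small support $B_r$, exhibits quasi-local behaviour, then dualize against $\lapv g'_i$ via a H\"older pairing in Lorentz spaces.

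First I would split the spatial domain dyadically: $\R = B_{\Lambda r}\cup\bigcup_{k\ge1}(B_{2^k\Lambda r}\setminus B_{2^{k-1}\Lambda r})$. On the innermost ball $B_{\Lambda r}$ one uses a standard three-term commutator estimate of the form
\[
\vrac{H_{1/2}(g_j',\phi)}{(\frac{2}{1-2s},\infty),B_{\Lambda r}} \aleq \vrac{\lapv g'}{(2,\infty),B_{\Lambda r}}\,\vrac{\Dsymbol^{-s+1/2}\phi}{(\frac{2}{1-2s},1)},
\]
which is essentially the Lorentz-space version of the standard Coifman--Rochberg--Weiss-type bound used in \cite{SchikorraIntBoundFrac11}. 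Pairing with $\lapv g'_i$ in $B_{\Lambda r}$ via H\"older's inequality \eqref{eq:hoelder} then produces the squared Lorentz norm $\vrac{\lapv g'}{(2,\infty),B_{\Lambda r}}^2$ times the $\phi$-factor, giving the first term on the right-hand side.

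For the far annuli the key device is quasi-locality for $H_{1/2}$: since $\phi$ is supported in $B_r$ while $x$ ranges over $B_{2^k\Lambda r}\setminus B_{2^{k-1}\Lambda r}$, the distances involved are comparable to $2^k\Lambda r$, so the kernel representation \eqref{eq:fraclap+} for each of the three summands in $H_{1/2}(g_j',\phi)$ produces a geometric factor $(2^k\Lambda)^{-\theta}$ for some $\theta>0$. A quantitative version of this appears in \cite{SchikorraIntBoundFrac11} and in the very lemma (Lemma~\ref{la:tangentialpart}) that handles the normal part, and it is the pattern one should copy here. Together with H\"older in Lorentz spaces this yields on each annulus a bound
\[
(2^k\Lambda)^{-\theta}\,\vrac{\lapv g'}{(2,\infty),\R}\,\vrac{\lapv g'}{(2,\infty),B_{2^k\Lambda r}}\,\vrac{\Dsymbol^{-s+1/2}\phi}{(\frac{2}{1-2s},1)};
\]
summing the geometric series in $k$ gives the tail term in the statement.

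The principal technical obstacle is the quasi-locality estimate for the three-term commutator $H_{1/2}$ when $s>0$: one must check that each of the three contributions $\laps{1/2}(g_j'\phi)$, $g_j'\laps{1/2}\phi$ and $\phi\laps{1/2}g_j'$ separately, or rather their cancellation, delivers the required polynomial decay $(2^k\Lambda)^{-\theta}$ rather than merely boundedness; this forces $s$ to be restricted to an interval $[0,s_0)$, since for $s$ too large the gain of $1/2-s$ derivatives on $\phi$ leaves no room for the extra decay. Once this cancellation is in hand, combining it with the Sobolev inequality to absorb $\phi$ into $\vrac{\Dsymbol^{-s+1/2}\phi}{(\frac{2}{1-2s},1)}$, and with the Lorentz H\"older inequality on each dyadic annulus, yields the statement. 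The antisymmetry identity \eqref{eq:antisympde}, although proved just before, is not needed for this lemma itself; it merely motivates why only $H_{1/2}(g_j',\phi)$ survives as the \emph{bad} term.
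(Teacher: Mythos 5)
Your overall strategy---dyadic decomposition of the domain, H\"older pairing in Lorentz spaces on each piece, and quasi-locality to gain the geometric factor $(2^k\Lambda)^{-\theta}$ on the annuli---is the right one and matches the paper's approach (which splits into $B_{\Lambda^{1/2}r}$ and the annuli $A^k_{\Lambda^{1/2},r}$ and then invokes Lemma~\ref{lem:localizedHEst}). However, there are two gaps.

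First, your local commutator estimate has the wrong Lorentz exponents. You place $H_{1/2}(g'_j,\phi)$ in $L^{(\frac2{1-2s},\infty)}$, but the pairing is with $\lapv g'_i\in L^{(2,\infty)}$, and the H\"older inequality~\eqref{eq:hoelder} forces the dual exponents to satisfy $\tfrac1{p_1}+\tfrac1{p_2}=1$ and $\tfrac1{q_1}+\tfrac1{q_2}=1$. Pairing $(2,\infty)$ with your $(\frac2{1-2s},\infty)$ fails on both counts for $s>0$. What one actually needs, and what Lemma~\ref{lem:localizedHEst} delivers (take $a=g'_j$, $\delta=0$, $q_1=\infty$, $b=\phi$, $\eps=s$, $q_2=1$, $q=1$), is
\[
 \|H_{1/2}(g'_j,\phi)\|_{(2,1),B_{\Lambda r}} \aleq
 \|\lapv g'\|_{(2,\infty),B_{\Lambda^2 r}}\,\|\laps{1/2-s}\phi\|_{(\frac{2}{1-2s},1)}
 + \text{tail},
\]
i.e.\ $H_{1/2}(g'_j,\phi)$ lands in $L^{(2,1)}$ while the $\frac{2}{1-2s}$ exponent attaches to the $\phi$-factor. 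This is not a cosmetic change: only with the $(2,1)$-norm of $H_{1/2}(g'_j,\phi)$ does the dual pairing close, and only then does the second factor $\|\lapv g'\|_{(2,\infty)}$, rather than some other norm, appear in the product.

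Second, you correctly identify the decay $(2^k\Lambda)^{-\theta}$ on the far annuli as the crux, but you do not supply a mechanism for it. In the paper this is handled by the pointwise three-term estimate for $H_{1/2+s}$ (Lemma~\ref{lem:EstimateForH12s}), which rewrites the commutator as a sum of products of Riesz potentials, together with the quantitative quasi-locality of such products (Lemma~\ref{lem:LowerOrderProducts.v2}, built on Lemmata~\ref{lem:QuasiLocality}, \ref{lem:QuasiLocalityOfNorms}); the combination is packaged as Lemma~\ref{lem:localizedHEst}, whose second part is exactly the annulus estimate $\|H_{1/2}(g'_j,\phi)\|_{(2,1),A^k_{\Lambda,r}}\aleq(\Lambda 2^k)^{-\theta}\|\lapv g'\|_{(2,\infty),\R}\|\laps{1/2-s}\phi\|_{(\frac 2{1-2s},1),\R}$ given $\supp\phi\subset B_r$. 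Without this machinery, or an explicit substitute, your proof is incomplete: the statement ``the kernel representation~\eqref{eq:fraclap+} produces a geometric factor'' is true for each individual term of $H_{1/2}$ only after the cancellation has been made manifest in the form of the Riesz-potential product estimate.
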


A proof is provided in the appendix.

It remains to estimate the first term on the right-hand side of Equation~\eqref{eq:antisympde}
for which we will use the Euler-Langrange equation~\eqref{eq:knotPDE}.

Combining this equation with the formula
\begin{multline} \label{eq:FormulaForNorm}
 \int_{\mathbb R } \lapv f_1'\ \lapv f_2' \\
 = c \lim_{\eps\searrow0}\int_{\mathbb R} \int_{\mathbb R\setminus(-\eps,\eps)} 
  \left( \left \langle f_1'(u), f'_2 (u) \right\rangle- 
  \frac{\left\langle f_1(u+w)-f_1(u), f_2(u+w)-f_2(u)\right\rangle }{w^2} \right) 
 \frac {dw}{w^2} du
\end{multline}
due to He \cite[Proposition 2]{He1999} we get the following estimate which 
contains all the information of the Euler-Langrange equation we need to proceed in the proof:

%
\begin{lemma}[Essential estimate of the Euler-Lagrange equation]\label{la:lhs}
  There is a constant $C < \infty$ such that 
  \begin{equation}\label{eq:el-est}
   \int_{\mathbb R} \left\langle \lapv g', \lapv   \phi \right \rangle 
   \leq C \int_{\mathbb R} |\phi(u)| \Gamma(u) du  + C \|\phi\|_{L^2}
  \end{equation}
 for any $\phi \in C^{\infty}_0 ((4/10,6/10), \mathbb R^n)$ with 
 $\left\langle\phi,\gamma'\right\rangle \equiv 0$ where
 \begin{equation*}
  \Gamma(u):= \int_{(-1,1)^3} \int_{-1/4}^{1/4} \frac{|g'(u)-g'(u+s_2 w)| \ 
  |g'(u+s_3 w)-g'(u+s_4 w)|^2} {|w|^2}\ \ dw\ ds.
 \end{equation*}
\end{lemma}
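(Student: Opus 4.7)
The plan is to start from He's identity~\eqref{eq:FormulaForNorm}, which, up to a multiplicative constant, expresses $\int_{\R}\sbrac{\lapv g',\lapv\phi}$ as a principal-value double integral of the same form as $Q_\eps$. I split this at $\abs w=\tfrac12$. On the region $\abs w>\tfrac12$ the kernel $1/w^2$ is bounded, $g'$ is compactly supported, and crude estimates bound the contribution by $C\norm\phi_{L^2}$. On $\abs w\le\tfrac12$, since $\supp\phi\subset(\tfrac{4}{10},\tfrac{6}{10})$ and $g=\eta\gamma$ coincides with the $1$-periodic extension of $\gamma$ on a $\tfrac{1}{2}$-neighbourhood of $\supp\phi$, this part equals $cQ(\gamma,\phi)$. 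The Euler--Lagrange equation~\eqref{eq:knotPDE} now replaces it with $c(T_1(\gamma,\phi)+T_2(\gamma,\phi))$, so~\eqref{eq:el-est} reduces to $\abs{T_1+T_2}\lesssim\int\abs\phi\,\Gamma\,\d u+\norm\phi_{L^2}$.

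The central tool will be the pointwise identity
\[
 \sbrac{\gamma'(x),\phi(y)}=\sbrac{\gamma'(x)-\gamma'(y),\phi(y)}, \qquad x,y\in\R,
\]
which is immediate from the hypothesis $\sbrac{\gamma'(y),\phi(y)}\equiv 0$. Each invocation of it converts a bare $\gamma'$ paired with $\phi$ into a first-order \emph{difference} of $\gamma'$, producing precisely the factor $\abs{g'(u)-g'(u+s_2 w)}$ in $\Gamma(u)$. The squared-difference factor in $\Gamma$ comes from the kernels $K=\abs{\D\gamma}^{-2}-w^{-2}$ and $L=\abs{\D\gamma}^{-4}-w^{-4}$: writing $\D\gamma=w\int_0^1\gamma'(u+\theta w)\,\d\theta$, expanding $1-\abs{\int_0^1\gamma'}^2$ via polarization, and using arc-length together with the bi-Lipschitz bound on $\gamma$ (which holds by $E^{(2)}(\gamma)<\infty$), one obtains
\[
 \abs{K(u,w)}\lesssim\frac{1}{w^2}\iint_{[0,1]^2}\abs{\gamma'(u+\theta_1w)-\gamma'(u+\theta_2w)}^2\,\d\theta_1\,\d\theta_2,
\]
and the analogue for $\abs L$ with $w^{-4}$ in place of $w^{-2}$.

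For $T_2$ I expand $\D\gamma=\int_0^w\gamma'(u+s)\,\d s$ and apply the key identity with $y=u+w$ and $y=u$ separately, giving
\[
 \sbrac{\D\gamma,\D\phi}=\int_0^w\brac{\sbrac{\gamma'(u+s)-\gamma'(u+w),\phi(u+w)}-\sbrac{\gamma'(u+s)-\gamma'(u),\phi(u)}}\d s.
\]
Combined with the $L$-bound, the substitution $s=\sigma w$, Fubini, and the translation $u\mapsto u-w$ that absorbs the $\phi(u+w)$-term, both summands assume the form $\int\abs{\phi(u)}\Gamma(u)\,\d u$ once the auxiliary parameter ranges are widened to $(-1,1)$.

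For $T_1$ the obstacle --- which I expect to be the hardest part of the argument --- is the undifferentiated $\phi'$. I decompose
\[
 \gamma'(u)=-\int_0^1\brac{\gamma'(u+\theta w)-\gamma'(u)}\d\theta+\tfrac{1}{w}\D\gamma.
\]
The first summand already carries a first-order $\gamma'$-difference, and an integration by parts in $u$ moves $\phi'$ onto $\phi$; the surface-type term $\int\sbrac{\gamma'(u),\phi(u)}\partial_u K\,\d u$ vanishes by hypothesis, so the contribution is reducible to the $T_2$-form. For the second summand, $\tfrac{1}{w}\int\sbrac{\D\gamma,\phi'(u)}K\,\d u$, integrating by parts in $u$ sends $\partial_u$ either onto $\D\gamma$ (producing the difference $\gamma'(u+w)-\gamma'(u)$) or onto $K$, whose $u$-derivative contains $2\sbrac{\D\gamma,\gamma'(u+w)-\gamma'(u)}/\abs{\D\gamma}^4$ and therefore again a first-order difference. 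Rewriting each surviving inner product $\sbrac{\D\gamma,\phi(u)}$ as $\int_0^w\sbrac{\gamma'(u+s)-\gamma'(u),\phi(u)}\,\d s$ by a further use of the key identity delivers, after the combinatorial bookkeeping of all these manipulations, integrands of the required form with one first-order and one squared second-order $\gamma'$-difference divided by $\abs w^2$, matching~$\Gamma$. Residues that lack a first-order $\gamma'$-difference are controlled by the bi-Lipschitz bound and $\norm{\gamma'}_{L^\infty}=1$ and contribute only to the $C\norm\phi_{L^2}$ term.
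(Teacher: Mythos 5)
The central trick --- using $\sbrac{\gamma'(y),\phi(y)}\equiv 0$ to convert a bare $\gamma'$ paired with $\phi$ into a first-order $\gamma'$-difference, combined with the kernel bound
\[
 \Big|\frac1{\abs{\D\gamma}^{2}}-\frac1{w^{2}}\Big|
 \aleq\frac{1}{w^{2}}\iint_{[0,1]^{2}}\abs{\gamma'(u+\theta_{1}w)-\gamma'(u+\theta_{2}w)}^{2}\d\theta_{1}\d\theta_{2}
\]
that produces the squared factor in $\Gamma$ --- is exactly right, and your treatment of $T_{2}$ essentially matches the paper's. The genuine gap is in the identification of the test function and, consequently, in your $T_{1}$. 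He's identity~\eqref{eq:FormulaForNorm} has $\lapv f_{1}'$ and $\lapv f_{2}'$ on the left; to produce $\int\sbrac{\lapv g',\lapv\phi}$ you must take $f_{2}'=\phi$, i.e.\ $f_{2}$ a \emph{primitive} of $\phi$. The right-hand side of He's identity is then not $Q(\gamma,\phi)$ but $Q(\gamma,h)$ for some primitive $h$ of $\phi$, and the Euler--Lagrange equation applies only after one makes $h$ periodic --- which forces the correction $h(u)=\int_{0}^{u}\phi-au$ with $a=\int_{0}^{1}\phi$, as the paper does, and you never address this. Once the correct test function is in place, $T_{1}(\gamma,h)$ involves $\sbrac{\gamma'(u),h'(u)}=\sbrac{\gamma'(u),\phi(u)-a}$, so the $\phi$-part vanishes \emph{exactly} by the orthogonality hypothesis, and the whole term is immediately bounded by $\abs a\,E^{(2)}(\gamma)\aleq\norm\phi_{L^{2}}$; there is no ``$\phi'$'' anywhere, and $T_{1}$ is in fact the easiest term.

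Your integration-by-parts strategy for $T_{1}$ is thus solving a problem that does not arise, and moreover it is not sound at the given regularity: differentiating $\int_{0}^{1}\bigl(\gamma'(u+\theta w)-\gamma'(u)\bigr)\d\theta$ or $\D\gamma$ in $u$ produces $\gamma''$, which for $\gamma\in\W[3/2]$ is not a function, and there is no obvious cancellation in your bookkeeping that eliminates these terms before they appear. Additionally, the ``surface-type term'' you claim to vanish is $\int\sbrac{\gamma',\phi}\partial_{u}K$, but after integrating by parts the term in which the derivative falls on $K$ actually carries the averaged difference $\int_{0}^{1}(\gamma'(u+\theta w)-\gamma'(u))\d\theta$ (not $\gamma'(u)$) paired with $\phi$, so the hypothesis does not make it disappear. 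The paper sidesteps all of this by testing with the periodized primitive $h_{\pi}$: then $Q$ and $T_{2}$ are identified with $\int\sbrac{\lapv g',\lapv\phi}$ and $\Gamma$ (plus $\norm\phi_{L^{2}}$-errors) via the same orthogonality and kernel estimates you use, while $T_{1}$ is killed outright.
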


The heart of the proof of Theorem~\ref{thm:FirstStep} is the 
following pointwise estimate of the most problematic term $\Gamma(u)$, which permits to localize it and which afterwards will be transformed 
into a bound of its $L^1$-norm.

\begin{lemma}[Estimate of the critical term]\label{la:T2est}
We have
\begin{equation}\label{eq:Gammauest}
 \Gamma(u) \aleq \abs{\Dsymbol^{-\frac{1}{12}} {\vert \lapv \dsolnt\vert}(u)}^2\ \Dsymbol^{-\frac{1}{3}}{\vert \lapv \dsolnt\vert}(u)+\Dsymbol^{-\frac{1}{12}} {\vert \lapv \dsolnt\vert}(u)\ \Dsymbol^{-\frac{1}{4}}\abs{\Dsymbol^{-\frac{1}{12}} {\vert \lapv \dsolnt\vert}(u)}^2
\end{equation}
almost everywhere.
\end{lemma}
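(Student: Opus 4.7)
I would reduce $\Gamma(u)$ to an algebraic expression in Riesz potentials of $\abs{\lapv\dsolnt}$ via the standard pointwise fractional-difference estimate
\[
 \abs{\dsolnt(x)-\dsolnt(y)} \aleq \abs{x-y}^{\beta}\,\bigl(\lapms{\frac12-\beta}\abs{\lapv\dsolnt}(x)+\lapms{\frac12-\beta}\abs{\lapv\dsolnt}(y)\bigr),\qquad \beta\in[0,\tfrac12],
\]
which follows from the Riesz representation $\dsolnt=c\,\lapmv(\lapv\dsolnt)$ together with the elementary kernel inequality $\bigl|\abs{x-\xi}^{-1/2}-\abs{y-\xi}^{-1/2}\bigr|\aleq \abs{x-y}^{\beta}(\abs{x-\xi}^{-1/2-\beta}+\abs{y-\xi}^{-1/2-\beta})$ (interpolating between the trivial $\beta=0$ bound and the mean-value bound at $\beta=1$). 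Writing $F_{\alpha}:=\lapms{\alpha}\abs{\lapv\dsolnt}$ for brevity, I would apply this estimate with the \emph{uniform} exponent $\beta=5/12$ to each of the three difference factors in the integrand of $\Gamma(u)$. The combined $\abs w$-power then becomes $\abs w^{3\cdot 5/12-2}=\abs w^{-3/4}$, which is integrable near $w=0$, and every remaining factor is an $F_{1/12}$ evaluated at one of the four points $u,u+s_2 w,u+s_3 w,u+s_4 w$.

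Expanding the product, using $(a+b)^{2}\aleq a^{2}+b^{2}$, and carrying out the $s_2,s_3,s_4$-integrations via the substitutions $y=u+s_i w$ (so that $ds_i=\abs w^{-1}dy$ over $\abs{y-u}\le\abs w$) together with Cauchy--Schwarz to absorb cross-terms of the form $A(w)^{3}\aleq A(w)B(w)$ and $A(w)^{2}\aleq B(w)$, one is left with
\[
 \Gamma(u)\aleq \int_{\abs w\le 1/4}\frac{dw}{\abs w^{3/4}}\bigl(F_{1/12}(u)\,B(w)+A(w)\,B(w)\bigr),
\]
where $A(w):=\abs w^{-1}\!\int_{\abs{y-u}\le\abs w}F_{1/12}(y)\,dy$ and $B(w):=\abs w^{-1}\!\int_{\abs{z-u}\le\abs w}F_{1/12}(z)^{2}\,dz$.

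The first summand is handled by Fubini: swapping the $w$- and $y$-integrations, the inner $w$-integral of $\abs w^{-7/4}$ from $\abs{y-u}$ to $1/4$ produces exactly $\abs{y-u}^{-3/4}$, giving
\[
 F_{1/12}(u)\int\frac{F_{1/12}(y)^{2}}{\abs{y-u}^{3/4}}\,dy = c\,F_{1/12}(u)\,\lapms{1/4}\bigl(F_{1/12}^{\,2}\bigr)(u),
\]
i.e.\ the second term of the RHS. For the second summand, Fubini yields the double integral $\iint F_{1/12}(y)F_{1/12}(z)^{2}\max(\abs{y-u},\abs{z-u})^{-7/4}\,dy\,dz$, which I would control by splitting into the two regions $\abs{y-u}\le\abs{z-u}$ and $\abs{y-u}>\abs{z-u}$ and using on each the small-ball bound
\[
 \int_{\abs{y-u}\le r}F_{1/12}(y)\,dy\aleq r^{3/4}\,F_{1/3}(u)
\]
(which follows from $\chi_{\abs{y-u}\le r}\le(r/\abs{y-u})^{3/4}$ and the definition of the Riesz potential of index $1/3$), together with its $F_{1/12}^{2}$-analogue producing $\lapms{1/4}(F_{1/12}^{2})(u)$. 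This matches the claimed sum $F_{1/12}(u)^{2}F_{1/3}(u)+F_{1/12}(u)\lapms{1/4}(F_{1/12}^{2})(u)$.

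The main difficulty is that the exponent sum $3\cdot 5/12=5/4$ sits only marginally above the critical integrability threshold $1$, so the $\abs w^{-3/4}$-weight is borderline and the $s_i$- and $w$-integrations must be carried out in a carefully matched way. In particular, the interpolation parameter $3/4$ in the small-ball estimate is the unique choice compatible with \emph{both} sides of the $\max$-decomposition and simultaneously producing Riesz potentials of the precise indices $1/3$ and $1/4$ on the right-hand side; any other value would either destroy local integrability in $y$ and $z$ or yield Riesz potentials of the wrong index. Tracking the exact compensation between the scales $\abs w$, $\abs{y-u}$ and $\abs{z-u}$ at this borderline regime is the core technical point.
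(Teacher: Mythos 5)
Your reduction of $\Gamma(u)$ to Riesz potentials, the Riesz-representation of $g'$-differences, and the kernel inequality you state are all correct (the kernel inequality is the paper's Lemma~\ref{lem:MultiplierEstimate} specialised to $\alpha=1/2$, in slightly weakened form), and you correctly handle the summand
$\int_{|w|\le 1/4}|w|^{-3/4}\,F_{1/12}(u)\,B(w)\,dw$, which does give the second term of~\eqref{eq:Gammauest} after Fubini. But the other summand
$\int_{|w|\le 1/4}|w|^{-3/4}\,A(w)\,B(w)\,dw$ --- which arises from the \emph{un-anchored} contribution $F_{1/12}(u+s_2 w)$ in your pointwise bound for the first difference $\abs{g'(u)-g'(u+s_2w)}$ --- is where the argument breaks down. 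After Fubini you reach, as you say,
\begin{equation*}
\iint F_{1/12}(y)\,F_{1/12}(z)^2\,\max\bigl(\abs{y-u},\abs{z-u}\bigr)^{-7/4}\,dy\,dz .
\end{equation*}
Applying the small-ball bound $\int_{\abs{y-u}\le r}F_{1/12}\aleq r^{3/4}F_{1/3}(u)$ on the region $\abs{y-u}\le\abs{z-u}$ (with $r=\abs{z-u}$) leaves $F_{1/3}(u)\int F_{1/12}(z)^2\abs{z-u}^{-1}\,dz$, and symmetrically the $F_{1/12}^2$-analogue on $\abs{y-u}>\abs{z-u}$ leaves $\lapms{1/4}(F_{1/12}^2)(u)\int F_{1/12}(y)\abs{y-u}^{-1}\,dy$. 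In both cases the remaining kernel is $\abs{\cdot-u}^{-1}$, which is not locally integrable in one dimension and is not a Riesz potential; replacing $3/4$ by any other admissible power $a\in(0,1)$ either leaves a non-integrable kernel (for $a\le 3/4$) or produces Riesz potentials of the wrong indices (for $a\in(3/4,1)$ one gets $\lapms{a-2/3}F(u)\,\lapms{1-a}(F_{1/12}^2)(u)$, which matches neither term in~\eqref{eq:Gammauest}). There is no choice of exponent that closes this term, and indeed there cannot be, since this double integral is an average of $F_{1/12}$ near $u$ that is not pointwise controlled by the value $F_{1/12}(u)$ appearing on the right-hand side.

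The paper sidesteps this entirely by \emph{not} applying a pointwise fractional-difference estimate to $g'$, but instead working at the level of the kernel
$m(\xi_1-u,0,s_1,w)=\bigl|\abs{\xi_1-u}^{-1/2}-\abs{\xi_1-u+s_1 w}^{-1/2}\bigr|$
and performing the $s_1$-integration \emph{before} integrating in $\xi_1$. Because one of the two arguments of $m$ is exactly zero, the case analysis (cases~1 and~2, and within case~2 the dichotomy around~\eqref{eq:x11a}) shows that in the only potentially dangerous sub-case one has the auxiliary constraint $\abs{\xi_1-u}\aleq\abs w$ (this is~\eqref{eq:est:chi12aleqw}). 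That constraint is precisely what makes~\eqref{eq:est:intlchi12ds} possible: it lets one trade the $\abs{w}^{-1/2-\delta}$ coming from the $s_1$-integral back for $\abs{\xi_1-u}^{-1/2-\delta}$, so that after averaging in $s_1$ the $\xi_1$-factor is always anchored at $u$. Your pointwise bound $\abs{g'(u)-g'(u+s_2w)}\aleq\abs{s_2w}^\beta\bigl(F_\beta(u)+F_\beta(u+s_2w)\bigr)$ is a true inequality, but it discards exactly this constraint, and the extra $F_\beta(u+s_2w)$ term it introduces is the source of the divergent second summand. To salvage the argument you would have to replace your pointwise estimate by the kernel-level statement $\int_{-1}^1 m(\xi_1-u,0,s_1,w)\,ds_1\aleq\abs{\xi_1-u}^{-1/2-\delta}\abs{w}^{\delta}$, which is the content of the paper's case analysis; once that is in hand the remaining factors can indeed be treated much as you propose.
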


In fact, Lemmata~\ref{la:lhs} and~\ref{la:T2est} provide the essential new estimates in this article
on which our entire reasoning relies crucially.
Using these and the above-mentioned improved Sobolev embeddings for
Lorentz spaces, we are ready to show the following estimate. This will allow us to 
prove a Dirichlet growth of the $L^{2, \infty}$-norm of $\lapv   g'$.

\begin{lemma}\label{la:rhsests}
There exist $R > 0$, $s_0 \in (0,\fracm{2})$, and $\sigma>0$
such that for any $\Lambda > 2$, $s \in [0,s_0)$, 
$B_{\Lambda r} \subset (\frac{5}{10},\frac{6}{10})$, 
$r \in (0,\Lambda^{-1}R)$ and $\varphi \in C_0^\infty(B_r)$ we have
\begin{align*}
 &\int \dsolnt_i \omega_{ij} \lapv \dsolnt_j\ \lapv \varphi \\
 &\aleq \vrac{\lapms{s} \lapv \varphi}_{(\frac{2}{1-2s},1)} \brac{r^\sigma + \vrac{\lapv \dsolnt}_{(2,\infty),B_{\Lambda r}}^2+ \Lambda^{-\theta} \vrac{\lapv \dsoln}_{(2,\infty),\R}\sum_{k=1}^\infty 2^{-\theta k}\ \vrac{\lapv \dsoln}_{(2,\infty),B_{2^k\Lambda r}}}.  
\end{align*}
\end{lemma}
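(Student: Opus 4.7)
The plan is to apply the integration-by-parts identity~\eqref{eq:antisympde} (valid for any antisymmetric matrix $\omega$) with the choice of test direction $\phi_i := \omega_{ij}\dsolnt_j\varphi$. This splits $\intl\dsolnt_i\omega_{ij}\lapv\dsolnt_j\,\lapv\varphi$ into the commutator $\intl\lapv\dsolnt_i\,\omega_{ij}H_{1/2}(\dsolnt_j,\varphi)$, which Lemma~\ref{lem:CriticalTerm} already estimates in the form demanded by the conclusion (note $\lapms{s}\lapv\varphi=\Dsymbol^{1/2-s}\varphi$), and the ``Euler--Lagrange piece'' $\intl\sbrac{\lapv g',\lapv\phi}$. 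For the latter, by antisymmetry of $\omega$ together with $g'=\dg$ on $\supp\varphi\subset B_{\Lambda r}\subset(\tfrac{5}{10},\tfrac{6}{10})$ (using~\eqref{eq:g=g} and the choice of $\eta$), one checks $\sbrac{\phi,\dg}\equiv0$. Hence Lemma~\ref{la:lhs} applies, and combined with the pointwise bound $|\phi|\le|\varphi|$ (since $|\dsolnt|\le1$), it yields
\[
 \Babs{\intl_{\R}\sbrac{\lapv g',\lapv\phi}}\aleq\intl_{\R}|\varphi|\,\Gamma\,du+\vrac{\varphi}_{L^2}.
\]

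The $L^2$ remainder is absorbed into the $r^\sigma$ term: since $\supp\varphi\subset B_r$, $\vrac{\varphi}_{L^2}\le(2r)^{1/2}\vrac{\varphi}_{L^\infty}$, and the Lorentz--Sobolev embedding $\vrac{f}_{L^\infty}\aleq\vrac{\Dsymbol^{1/2-s}f}_{(2/(1-2s),1)}$ recalled in the introduction bounds this by $r^{1/2}\vrac{\lapms{s}\lapv\varphi}_{(2/(1-2s),1)}$, so $\sigma=\tfrac12$ works. The heart of the proof is the estimate of $\intl|\varphi|\Gamma$. Plugging in Lemma~\ref{la:T2est} and writing $f:=|\lapv\dsolnt|$, I would split $f=f_{\mathrm n}+f_{\mathrm f}$ with $f_{\mathrm n}:=f\,\mathbf{1}_{B_{\Lambda r}}$ and expand each of the two cubic summands of Lemma~\ref{la:T2est} into the resulting products of ``near'' and ``far'' Riesz potentials. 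Every piece is then bounded by combining Lorentz H\"older~\eqref{eq:hoelder}, the fractional Sobolev inequality $\vrac{\Dsymbol^{-\alpha}h}_{(p^*,q)}\aleq\vrac{h}_{(p,q)}$ with $1/p^*=1/p-\alpha$, and the embedding $\vrac{\varphi}_{(\infty,1)}\aleq\vrac{\lapms{s}\lapv\varphi}_{(2/(1-2s),1)}$.

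The purely ``near'' combinations produce $\vrac{\varphi}_{(\infty,1)}\,\vrac{f}_{(2,\infty),B_{\Lambda r}}^3$, which, after absorbing one copy of the globally finite $\vrac{f}_{(2,\infty),\R}$ into the constant, matches the stated $\vrac{\lapv\dsolnt}_{(2,\infty),B_{\Lambda r}}^2$ contribution. For the remaining combinations the essential input is a dyadic decay bound: for $u\in B_r$ and $y$ in the annulus $A_k:=B_{2^{k+1}\Lambda r}\setminus B_{2^k\Lambda r}$ one has $|u-y|\aeq 2^k\Lambda r$, and H\"older on $A_k$ combined with~\eqref{eq:fraclap-} yields
\[
 |\Dsymbol^{-\alpha}f_{\mathrm f}(u)|\aleq\suml_{k=0}^\infty(2^k\Lambda r)^{\alpha-1/2}\vrac{f}_{(2,\infty),A_k}.
\]
Since the exponents $\alpha\in\{\tfrac1{12},\tfrac14,\tfrac13\}$ appearing in Lemma~\ref{la:T2est} all satisfy $\alpha<1/2$, transferring this pointwise bound to an appropriate Lorentz norm on $B_r$ (multiplying by $|B_r|^{1/2-\alpha}=r^{1/2-\alpha}$) cancels the $r$-dependence and produces a positive power $\Lambda^{-(1/2-\alpha)}$ together with a geometric series in $k$. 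The principal obstacle I anticipate is the bookkeeping: for each of the eight mixed/far terms in the expansion the primary and dual Lorentz exponents must be chosen so that the positive powers of $\Lambda^{-1}$ and the weighted annular norms aggregate into the single series $\Lambda^{-\theta}\sum_k 2^{-\theta k}\vrac{\lapv\dsolnt}_{(2,\infty),B_{2^k\Lambda r}}$ appearing in the statement, while the third un-split Riesz potential delivers the factor $\vrac{\lapv\dsolnt}_{(2,\infty),\R}$.
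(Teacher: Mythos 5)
Your opening moves coincide with the paper's: plug $\phi_i:=\omega_{ij}g'_j\varphi$ into \eqref{eq:antisympde}, check $\sbrac{\phi,\gamma'}\equiv0$ using antisymmetry and $g'=\gamma'$ on $\supp\varphi$, invoke Lemma~\ref{lem:CriticalTerm} for the $H$-commutator, invoke Lemma~\ref{la:lhs} for the remaining pairing, and absorb the $L^2$ remainder into $r^\sigma$ via $\vrac{\varphi}_{L^2}\aleq r^{1/2}\vrac{\varphi}_\infty\aleq r^{1/2}\vrac{\lapms s\lapv\varphi}_{(\frac2{1-2s},1)}$. The paper then handles $\vrac{\Gamma}_{1,B_r}$ with a Lorentz--H\"older split and Lemma~\ref{lem:QuasiLocalityOfNorms}~(iii); your hand-built near/far annular decomposition is essentially the content of that lemma, so in spirit the two routes to the $\Gamma$-term are the same.

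The genuine gap is in your claimed near-near-near contribution $\vrac{\varphi}_{(\infty,1)}\vrac{f}_{(2,\infty),B_{\Lambda r}}^3$. First, $(\infty,1)$ is not a Lorentz space (the paper explicitly restricts attention to $L^{1,1}$ and $L^{\infty,\infty}$ outside $p\in(1,\infty)$); you presumably mean $\vrac{\varphi}_\infty$. More seriously, the H\"older inequality on Lorentz scales, $\vrac{fgh}_{(1,1)}\aleq\vrac f_{(p_1,q_1)}\vrac g_{(p_2,q_2)}\vrac h_{(p_3,q_3)}$, requires $\sum 1/q_i\ge1$ in addition to $\sum 1/p_i=1$. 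A triple product of weak-type factors with $q_1=q_2=q_3=\infty$ therefore does \emph{not} land in $L^1$, so the cube in $\vrac f_{(2,\infty),B_{\Lambda r}}$ simply cannot come out of H\"older: at least two of the three factors must carry a finite secondary index. Since $F=\sabs{\lapv g'}$ is only known to lie in $L^2=L^{2,2}$, Sobolev supplies the Riesz potentials in $(p,2)$, not $(p,1)$. Concretely, the split is forced to look like $(p_1,\infty)\times(p_2,2)\times(p_3,2)$, which is exactly the paper's factor $\Theta$ (holding two $(12/5,2)$-type norms controlled by $\vrac F_{L^2}$) times a \emph{single} local $(p,\infty)$-norm. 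So this term is linear, not quadratic, in $\vrac{\lapv g'}_{(2,\infty),B_{\Lambda r}}$, and the smallness you need for the Dirichlet-growth iteration has to come from the smallness of $\Theta$ as $r\to0$, not from squaring the local weak norm as you propose. A second, smaller point: the second summand in Lemma~\ref{la:T2est} has a nested potential $\lapms{1/4}\br{\abs{\lapms{1/12}\abs F}^2}$, so a na\"ive near/far decomposition of the innermost $f$ does not commute past the outer $\lapms{1/4}$; the paper sidesteps this by estimating that whole nested block in a $(12/7,1)$-norm (again part of $\Theta$) and applying quasi-locality only to the untangled factor $\lapms{1/12}\abs F$.
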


\subsection{Conclusion of the proof of Theorem~\ref{thm:FirstStep}}

Combining Lemma~\ref{la:rhsests} with Proposition~\ref{pr:localcontrolelliptic}, we get 
\begin{align}
 &\vrac{\laps{s}\brac{\dsolnt_i \omega_{ij} \lapv \dsolnt_j}}_{(\frac{2}{2s+1},\infty),B_{r}} \nonumber\\
&\aleq 
\brac{\br{\Lambda^{2}r}^\sigma + \vrac{\lapv \dsolnt}_{(2,\infty),B_{\Lambda^{3}r}}^2+ \vrac{\lapv \dsoln}_{(2,\infty),\R} \Lambda^{-\theta} \sum_{k=1}^\infty 2^{-\theta k}\ 
\vrac{\lapv \dsoln}_{(2,\infty),B_{\Lambda^{3} 2^k r}}} \nonumber
\\ &\quad {}+ \Lambda ^{-\theta}r^{-s} \sum_{k=1}^\infty 2^{-\theta k}\ \vrac{\lapv \dsoln}_{(\frac2{1+2s},\infty),B_{\Lambda 2^k r}}\nonumber\\
&\aleq\br{\rule{0ex}{4ex}\cdots} +
\Lambda^{s-\theta} \sum_{k=1}^\infty 2^{(s-\theta) k}\ \vrac{\lapv \dsoln}_{(2,\infty),B_{\Lambda 2^k r}}
\label{eq:nonorthogpartest1}
\end{align}
for some $\theta >0$ uniformly in $\Lambda$.

Let us first use this for $s = 0$, to get in view of \eqref{eq:differencegtanggorth} 
and Lemma \ref{la:tangentialpart} that for all $\varepsilon>0$ we have for sufficiently small
 $r>0$, $B_r \subset \brac{\frac{4}{10},\frac{6}{10}}$  and big enough $\Lambda$
\[
 \vrac{\lapv \dsolnt}_{(2,\infty),B_r} \leq C\br{\br{\Lambda^{2}r}^\sigma + 
  \varepsilon\ \vrac{\lapv \dsolnt}_{(2,\infty),B_{\Lambda^{3} r}}+ (1+\vrac{\lapv \dsoln}_{(2,\infty),\R}) \Lambda^{-\theta} \sum_{k=2}^\infty 2^{-\theta k}\ \vrac{\lapv \dsoln}_{(2,\infty),B_{2^k \Lambda^{3} r}}}
\]
uniformly in $\Lambda$ and $\varepsilon$.

Let us fix such an $r_0$, and consider the above equation for $\Lambda = 2^{m/3}$,
$r = 2^{-m-k} r_0$ (w.l.o.g. $r_0 = 1$).  Setting $b_0 := \|\lapv   g'\|_{(2,\infty)}$ 
and $b_k:= \|\lapv  g'\|_{(2,\infty),B_{2^{-k}}}$ the above estimate gives
\begin{equation*}
 b_{k+m} \leq C 2^{-\sigma (k+m/3)} + \varepsilon b_{k} + 
 C 2^{-\theta m/3}\br{\sum_{l=1}^{k} 2^{-\theta l}b_{k-l}
 +C2^{-\theta k}}
\end{equation*}
for every $k \in \mathbb N_0$ where $C< \infty$ does not depend on $k$.

Using the iteration argument, Lemma~\ref{lem:IterationLemma}, leads to
\begin{equation} \label{eq:morreynormlapvdsolnt}
 \vrac{\lapv \dsolnt}_{(2,\infty),B_r} \leq C_{\soln}\ r^{\tilde{\sigma}}
\end{equation}
for all $B_{r} \subset \brac{\frac{9}{20},\frac{11}{20}}$
 and $r$ small enough.


This immediately implies H\"older regularity of the solution $g'$. Instead, let us iterate
the above argument to get $\lapv g' \in L^p_{loc}((9/20,11/20))$, for some $p > 2$.

Let us assume without loss of generality that $\tilde{\sigma} < \frac{\theta}{2}$. 
Then, plugging \eqref{eq:morreynormlapvdsolnt} into \eqref{eq:nonorthogpartest1} one obtains
\[
 \vrac{\laps{s}\brac{\dsolnt_i \omega_{ij} \lapv \dsolnt_j}}_{(\frac{2}{2s+1},\infty),B_{r}} 
 \aleq C_{\Lambda,\soln,s,R}\ r^{\min\{\sigma,\tilde{\sigma}, \tilde{\sigma}-s,\theta/2\}}
\]
for small enough $0<s<(0,\tilde\sigma)$, $r>0$, and $B_r \subset [9/20, 11/20]$.
Here we have used that
$2^{k}\Lambda^{3}r\ge R_{0}$ for some $R_{0}>0$ leads
to $2^{-\theta k}\le 2^{-\theta k/2}\br{\frac{\Lambda^{3}}{R_{0}}}^{\theta/2}r^{\theta/2}$.
So the series in~\eqref{eq:nonorthogpartest1}
converge and are bounded by some small positive power of $r$.


On the other hand, \eqref{eq:morreynormlapvdsolnt} and \eqref{eq:orthogpartest} 
together imply for any small enough $r >0$, $\Lambda := r^{-\fracm{2}}$,
and $B_{r}\subset\brac{\frac{9}{20},\frac{11}{20}}$  
\[
\begin{ma} 
 \vrac{\laps{\fracm{4}}\sbrac{ \dsolnt, \lapv \dsolnt }}_{(\frac{4}{3},\infty),B_r}  
&\overset{\eqref{eq:orthogpartest}}{\aleq}& \vrac{\lapv \dsolnt}_{(2,\infty),B_{r^{\fracm{2}}}}^2 + r^{\frac{\theta}{2}}\ \vrac{\lapv \dsolnt}_{(2,\infty),\R}^2 + r^{\frac34}\\
&\overset{\eqref{eq:morreynormlapvdsolnt}}{\leq}& C_{\soln}\ \brac{r^{\tilde{\sigma}} + r^{\frac{\theta}{2}} + r^{\frac34}}.
\end{ma}
\]
That is, $\laps{s}\brac{\dsolnt_i \omega_{ij} \lapv \dsolnt_j}$, and $\laps{\fracm{4}}\sbrac{ \dsolnt, \lapv \dsolnt }$ both belong locally to a Morrey space
$\mathcal L_{loc}^{(p,q),\lambda}$ on $\brac{\frac{9}{20},\frac{11}{20}}$.  
More precisely, for some $\lambda \in (0,1)$ we have
\[
 \begin{ma}
\laps{s}\brac{\dsolnt_i \omega_{ij} \lapv \dsolnt_j}    &\in&  \mathcal{L}^{(\frac{2}{2s+1},\infty),\lambda}_{loc}\brac{\frac{9}{20},\frac{11}{20}}, \\
\laps{\fracm{4}}\sbrac{ \dsolnt, \lapv \dsolnt } &\in& \mathcal{L}^{(\frac{4}{3},\infty),\lambda}_{loc}\brac{\frac{9}{20},\frac{11}{20}}.
 \end{ma}
\]

The boundedness of Riesz potentials on Morrey spaces, as shown in \cite{Adams75}, 
implies that for some $p > 2$ 
\[
 \dsolnt_i \omega_{ij} \lapv \dsolnt_j,\ \sbrac{ \dsolnt, \lapv \dsolnt } \in L^{p}_{loc}\brac{\frac{9}{20},\frac{11}{20}}.
\]

Together, using \eqref{eq:differencegtanggorth}, we have shown
\[
 \lapv \dsolnt_j \in L^p_{loc}\brac{\frac{9}{20},\frac{11}{20}},
\]
which finishes the proof of Theorem~\ref{th:reg:1alpha}.\qed

\subsection{
Proof of Lemma~\ref{la:lhs}}

The main idea is to use $\phi$ as the derivative of a test function for the 
Euler-Lagrange equation. Of course this is not possible generally, but with some
precaution we can actually do it up to a benign error term:
For $\phi \in C^\infty_0 ((4/10,6/10))$ we set
\begin{equation*}
   h(u):= \int_0^u \phi(v) dv - a u 
\end{equation*}
where $a=\int_0^1 \phi(v) dv$  is chosen such that $ h(0)= h(1)$. 
Hence, if we set
\begin{align*}
h_\pi(k+t) := h(t), \quad \forall t\in [0,1) , k \in \mathbb Z, \\
\phi_\pi(k+t) := \phi(t), \quad \forall t\in [0,1) , k \in \mathbb Z,
\end{align*}
$h_\pi$ is a smooth one periodic function satisfying
\begin{equation*}
    h_\pi'(u) =  \phi_\pi  - a.
\end{equation*}
and, as we assume in Lemma~\ref{la:lhs} that $\left\langle\phi,\gamma'\right\rangle \equiv 0$, also
\begin{equation*}
 \left\langle \phi_\pi , \gamma'\right \rangle \equiv 0.
\end{equation*}

Since $\gamma$ is a stationary point of the M\"obius energy, testing the 
equation \eqref{eq:knotPDE} with $h_\pi$, (recall \eqref{eq:defT1}, \eqref{eq:defT2},\eqref{eq:defQeps}) 
\begin{equation}\label{eq:ourELeqn}
 Q(\gamma,h_\pi) := \lim_{\varepsilon \to 0} Q_\varepsilon(\gamma,h_\pi)
= T_1(\gamma,h_\pi) + T_2(\gamma,h_\pi).
\end{equation}

As $\phi_\pi$ is perpendicular to $\gamma'$ we can estimate the term $T_1(\gamma,h_\pi)$
by
\begin{align}
 T_1(\gamma,h_\pi) &= \int_{-1}^1 \int_{-1/2}^{1/2}
   \left\langle \gamma'(u),\phi_\pi(u) - a \right\rangle 
  \left(\frac 1 {|\gamma(u+w)- \gamma(u)|^2} - \frac 1 {w^2} \right)\nonumber\\
  &\leq |a|\ E^{(2)}(\gamma) \leq C \|\phi\|_{L^1} \leq C \|\phi\|_{L^2}.\label{eq:toRT1}
\end{align}
As for the remaining terms $Q(\gamma,h_\pi)$ and $T_2(\gamma,h_\pi)$, we will identify them essentially with the left-hand side of \eqref{eq:el-est} and the $\Gamma$-term on the right-hand side of \eqref{eq:el-est}, respectively. 
A technical detail one has to take into account here, is that the domain of \eqref{eq:ourELeqn} is the torus $\R / \Z$, whereas the respective domain in \eqref{eq:el-est} is the real line $\R$.
To estimate the other terms, let us introduce for 
$f_1, f_2 \in H^{3/2}_{loc}(\mathbb R, \mathbb R^n)$
the operators
\begin{align*}
 \tilde{Q}_\varepsilon (f_1,f_2)& :=  
 \int_{0}^{1}\int_{\tilde I_\varepsilon}
  \left(\left\langle f_1'(u),\ f_2'(u) \right\rangle -
  \frac { \left\langle f_1(u+w)-f(u), f_2(u+w) -f_2(u) 
  \right\rangle}{w^2} \right) \frac {dw}{w^2}\ du\\
& = \int_{0}^{1}\int_{\tilde I_\varepsilon} \int_0^1 \int_0^1
  \frac {\left( \left\langle f_1'(u),\ f_2'(u) \right\rangle- \left\langle f'_1(u+s_1w), f'_2(u+s_2w)
  \right\rangle	\right) }{w^2} ds_1 ds_2 dw\ du,
\end{align*}
where $\tilde I_\varepsilon := [-1/4, 1/4]\setminus [-\varepsilon, \varepsilon]$,
\[ \tilde Q := \lim_{\eps\searrow0} \tilde Q_\eps, \]
and
\begin{equation}
\begin{aligned}
 &\tilde {T}_2(f_1,f_2) \\ &:=  \int_{0}^{1}\int_{-1/4}^{1/4}
  \left\langle f_1(u+w)-f_1(u),\ f_2(u+w)-f_2(u) \right\rangle \left(\frac 1  {|\gamma(u+w)-\gamma(u)|^4}-
 \frac1{|w|^4}\ \right) dw\ du \\
&= \int_{0}^{1}\int_{-1/4}^{1/4} \int_0^1 \int_0^1 
  \left\langle f_1'(u+s_1 w),\ f_2'(u+s_2 w) \right\rangle w^2\left( \frac 1 {|\gamma(u+w)-\gamma(u)|^4}
- \frac1{|w|^4}\ \right) ds_1\ ds_2\ dw\ du.
\end{aligned}
\end{equation}

Recall that $[-\frac{1}{2},\frac{1}{2}]\backslash (-\varepsilon,\varepsilon)$ is used for the definition of $Q$ 
and $\tilde I_\varepsilon := [-1/4, 1/4]\setminus [-\varepsilon, \varepsilon]$ is used for $\tilde{Q}$, 
so the difference only contains the set where $\abs{w} > \fracm{4}$, thus $\abs{w}^{-2}$ is not singular. Quantitatively, this reads as
\begin{equation} \label{eq:toRtilde}
 |\tilde Q(\gamma,h) - Q(\gamma,h)| + |\tilde T_2 (\gamma,h) - T_2(\gamma,h)| 
\leq C \|\gamma'\|_{L^2} \|h'\|_{L^2} \leq C \|\gamma'\|_{L^2} \|\phi\|_{L^2},
\end{equation}
where we have used that $\gamma$ is bi-Lipschitz in order to deal with $T_2$.

We now compute
\begin{align}
 \tilde{Q}_\eps(\gamma,h_\pi) &=\int_{0}^{1}\int_{\tilde I_\varepsilon} \int_0^1 \int_0^1
  \br{\left\langle \gamma'(u),\ h_\pi'(u)\right\rangle - \left\langle \gamma'(u+s_1w), h'_\pi(u+s_2w)
  \right\rangle} \d s_1\d s_2\frac{dw}{w^2}\ du \nonumber  \\
 &= \int_{0}^{1}\int_{\tilde I_\varepsilon} \int_0^1 \int_0^1
  \br{\left\langle \gamma'(u),\ \phi_{\pi} \right\rangle - \left\langle \gamma'(u+s_1w),  \phi_\pi(u+s_2w)
  \right\rangle} \d s_1\d s_2\frac{dw}{w^2}\ du \nonumber \\
&\quad - \int_{0}^{1}\int_{\tilde I_\varepsilon} \int_0^1 \int_0^1
  \br{\left\langle \gamma'(u),\ a \right\rangle -\left\langle \gamma'(u+s_1w), a
  \right\rangle} \d s_1\d s_2\frac{dw}{w^2}\ du \label{eq:qtildenullterm} \\
&= \int_{0}^{1}\int_{\tilde I_\varepsilon} \int_0^1 \int_0^1
  \br{\left\langle g'(u),\ \phi(u) \right\rangle - \left\langle g'(u+s_1w), \phi(u+s_2w)
  \right\rangle} \d s_1\d s_2\frac{dw}{w^2}\ du . \label{eq:qtilderhs}
\end{align}
Here we have used that $\int_0^1 \gamma'(u+\tilde w) du =0$ for all $\tilde{w}$, i.e., the term \eqref{eq:qtildenullterm} is constantly zero for any $w$. The term \eqref{eq:qtilderhs} essentially is the $L^2$-pairing of $\lapv g'$ and $\lapv \phi$: More precisely we will show
\begin{equation} \label{eq:toRQ}
 |\tilde{Q}(\gamma,h_\pi) - \int_{\mathbb R} \lapv g' \lapv \phi | \leq C_\gamma \|\phi\|_{L^2}.
\end{equation}
\begin{proof}[Proof of \eqref{eq:toRQ}]
%
Using~\eqref{eq:FormulaForNorm}, we have for any $f_1,f_2 \in C_0^\infty(\R)$ as $\eps \to 0$
\begin{equation}\label{eq:Heidentity}
\begin{aligned}
  &\int_{\mathbb R} \lapv f_1' \lapv f_2' du + o(1) \\
 &= c\int_{\mathbb R} \int_{\mathbb R \setminus [-\varepsilon, \varepsilon]} \int_0^1 \int_0^1
  \left(\left\langle f_1'(u), f_2'(u) \right\rangle
 - \left\langle f'_1(u+s_1w), f'_2(u+s_2w) \right\rangle  \right) 
\d s_1\d s_2\frac {dw}{|w|^2} du.
\end{aligned}
\end{equation}
We now connect and \eqref{eq:Heidentity} and \eqref{eq:qtilderhs}. The technical problem is, that the integral of $\phi$ is not a feasible test-function for \eqref{eq:Heidentity}. Let therefore for $\eta_{[-10,10]} \in C_0^\infty ((-11,11))$ and $\eta_{[-10,10]} \equiv 1$ in $[-10,10]$,
\[
 \psi := \brac{u \mapsto \eta_{[-10,10]} \int_0^u \phi} \in C_0^\infty(\R).
\]
Thus, $\psi$ is a feasible test-function for \eqref{eq:Heidentity}, which $\phi$ is not. Moreover,
\begin{equation}\label{eq:l42:psip}
 \psi' = \eta_{[-10,10]} \phi + \eta'_{[-10,10]} \int_0^u \phi \overset{\supp \phi}{\equiv } \phi + \eta'_{[-10,10]} \int_0^u \phi.
\end{equation}
We thus arrive at
\begin{align*}
 \tilde{Q}_\varepsilon(\gamma,h) \;
 &\refeq{qtilderhs}
 \int_{0}^{1}\int_{\tilde I_\varepsilon} \int_0^1 \int_0^1
  \br{\left\langle g'(u),\ \phi(u) \right\rangle - \left\langle g'(u+s_1w), \phi(u+s_2w)
  \right\rangle} \d s_1\d s_2\frac{dw}{w^2}\ du\\
 &\stackrl{$\supp \phi$, $\abs{w}$,\eqref{eq:l42:psip}}\qquad
 \int_\R \int_{\tilde I_\varepsilon} \int_0^1 \int_0^1
  \br{\left\langle g'(u),\ \psi'(u) \right\rangle - \left\langle g'(u+s_1w), \psi'(u+s_2w)
  \right\rangle} \d s_1\d s_2\frac{dw}{w^2}\ du\\ 
 &= \int_\R \int_{\R \backslash [-\varepsilon,\varepsilon]} \int_0^1 \int_0^1
  \br{\left\langle g'(u),\ \psi'(u) \right\rangle - \left\langle g'(u+s_1w), \psi'(u+s_2w)
  \right\rangle} \d s_1\d s_2\frac{dw}{w^2}\ du\\
&\qquad{} - \int_\R \int_{\abs{w} > \fracm{4}} \int_0^1 \int_0^1
  \br{\left\langle g'(u),\ \psi'(u) \right\rangle - \left\langle g'(u+s_1w), \psi'(u+s_2w)
  \right\rangle} \d s_1\d s_2\frac{dw}{w^2}\ du\\
  &\refeq{Heidentity}
  \int \langle \lapv g', \lapv \psi' \rangle\\
&\qquad{} - \int_\R \int_{\abs{w} > \fracm{4}} \int_0^1 \int_0^1
  \br{\left\langle g'(u),\ \psi'(u) \right\rangle - \left\langle g'(u+s_1w), \psi'(u+s_2w)
  \right\rangle} \d s_1\d s_2\frac{dw}{w^2}\ du\\
  &\qquad{} + o(1)\\
&\refeq{l42:psip}
\int \langle \lapv g', \lapv \phi \rangle + \int \langle \lapv g', \lapv (\eta'_{[-10,10]} \int_0^{\cdot} \phi) \rangle\\
&\qquad{} - \int_\R \int_{\abs{w} > \fracm{4}} \int_0^1 \int_0^1
  \br{\left\langle g'(u),\ \psi'(u) \right\rangle - \left\langle g'(u+s_1w), \psi'(u+s_2w)
  \right\rangle} \d s_1\d s_2\frac{dw}{w^2}\ du\\
  &\qquad{} + o(1).
\end{align*}
Now, by usual interpolation and/or imbedding of Sobolev spaces, see, e.g. \cite{Tartar2007,Schikorra-Doktor},
\[
 \vrac{ \lapv f}_{2,\R} \aleq \vrac{f}_{2,\R} + \vrac{f'}_{2,\R},
\]
we have
\[
 \| \lapv (\eta'_{[-10,10]} \int_0^{\cdot} \phi) \|_{2,\R} \aleq \vrac{\phi}_1 + \vrac{\phi}_2 \overset{\supp \phi}{\aleq} \vrac{\phi}_2.
\]
Moreover,
\[
\begin{ma}
 &&\int_\R \int_{\abs{w} > \fracm{4}} \int_0^1 \int_0^1
  \br{\left\langle g'(u),\ \psi'(u) \right\rangle - \left\langle g'(u+s_1w), \psi'(u+s_2w)
  \right\rangle} \d s_1\d s_2\frac{dw}{w^2}\ du \\
  &\aleq& \int_{\abs{w} > \fracm{4}} \abs{w}^{-2}\ \vrac{g'}_{2} \vrac{\psi'}_{2}\\
  &\overset{\eqref{eq:l42:psip}}{\aleq}& \vrac{g'}_{2,\R} \brac{\vrac{\phi}_{1,\R} + \vrac{\phi}_{2,\R}} \overset{\supp g}{\aleq} C_\gamma\ \vrac{\phi}_{2,\R}.
 \end{ma}
\]
Thus, we have shown that \eqref{eq:toRQ} holds.
\end{proof}

To estimate $\tilde T_2 (\gamma,h)$ we calculate
\begin{align*}
 \tilde T_2(\gamma,h)
 &= \int_{0}^{1}\int_{-1/4}^{1/4} \int_0^1 \int_0^1
  \left\langle \gamma'(u+s_1 w),\  \phi_\pi(u+s_2 w) 
  + a\right\rangle w^2\cdot\\
  &\qquad\qquad\qquad\qquad{}\cdot\left( \frac 1 {|\gamma(u+w)-\gamma(u)|^4}-
 \frac1{|w|^4}\ \right) ds_1\ ds_2\ dw\ du\\
 &= \int_{0}^{1}\int_{-1/4}^{1/4} \int_0^1 \int_0^1
  \left\langle \gamma'(u+s_1 w),\ \phi(u+s_2 w)- \phi(u+s_1 w) 
+ a \right\rangle w^2 \cdot\\
&\qquad\qquad\qquad\qquad{}\left( \frac 1 {|\gamma(u+w)-\gamma(u)|^4}-
 \frac1{|w|^4}\ \right) ds_1\ ds_2\ dw\ du
\end{align*}
and using
\begin{align*}
 w^2 \left( \frac 1 {|\gamma(u+w)-\gamma(u)|^4}- \frac 1 {|w|^4} \right) 
  & = \frac {w^4} {|\gamma(u+w)-\gamma(u)|^4} 
      \left( \frac {1 - \frac {|\gamma(u+w)- \gamma(u)|^4} {w^4}}{w^2}\right) \\
 & \leq C \frac {1 - \frac {|\gamma(u+w)- \gamma(u)|^2} {w^2}}{w^2} \\
 &  = \frac C 2 \int_{0}^1 \int_0^1 \frac {|\gamma'(u+s_3 w) - \gamma'(u+s_4 w)|^2}{w^2}ds_3 \ ds_4
\end{align*}
we get
\begin{equation} \label{eq:toRT2}
\begin{aligned}
 &|\tilde T_2(\gamma,h)|  \leq \abs a E^{(2)} (\gamma) \\ &\quad  + \int_{0}^{1}\int_{-1/4}^{1/4} \int_{(0,1)^4}
 \frac{ |\gamma'(u+s_1 w)-\gamma'(u+s_2w)| \ |\gamma'(u+s_3 w) - \gamma'(u+s_4 w)|^2 \ 
|\phi(u+s_2 w)|}{w^2}  dsdw\ du \\
& \leq C \|\phi\|_{L^2} + \int_{0}^{1}\int_{-1/4}^{1/4} \int_{(-1,1)^3}
 \frac{ |\gamma'(u+s_1 w)-\gamma'(u)| \ |\gamma'(u+s_3 w) - \gamma'(u+s_4 w)|^2 \ 
|\phi(u)|}{w^2}  dsdw\ du.
\end{aligned}
\end{equation}

From \eqref{eq:toRtilde},\eqref{eq:toRT1},\eqref{eq:toRQ}, and \eqref{eq:toRT2} 
one gets the claim, since $\gamma' = g'$ on $[-1/4,5/4]$.


\subsection{
Proof of Lemma~\ref{la:T2est}}
Let
\[
 F(u) := \sabs{\lapv {\solnt'}(u)}.
\]
Since $g',\lapv g'\in L^2$, we obtain
\begin{align*}
 &{\solnt}'(x)-{\solnt}'(y) = \lapmv (\lapv {\solnt}')(x)-\lapmv (\lapv {\solnt}'(y)) \\
 &= c_{\frac12}\br{\int \sabs{x-\xi}^{-1+\fracm{2}}\ \lapv {\solnt}'(\xi)\ d\xi-\int \sabs{y-\xi}^{-1+\fracm{2}}\ \lapv {\solnt}'(\xi)\ d\xi},
\end{align*}
then
\[
 \sabs{{\solnt}'(x)-{\solnt}'(y)} \aleq \intl_{\R} \sabs{\sabs{\xi-x}^{-1+\frac{1}{2}}-\sabs{\xi-y}^{-1+\frac{1}{2}}}\ {F(\xi)}\ d\xi,
\]
and hence
\[
 \Gamma(u) \aleq \intl_{(-1,1)^3}\ \intl_{\R^3} \intl_{-1/4}^{1/4} {F(\xi_1)}\ {F(\xi_2)}\ {F(\xi_3)}\ k(\xi,u,s,w)
\ dw\ d\xi\ ds, 
\]
where for almost every $s = (s_1,s_2,s_3) \in (-1,1)^3$, $\xi = (\xi_1,\xi_2,\xi_3) \in \R^3$, $w \in \R$, $u \in \R$,
\begin{align}\label{eq:k}
 k(\xi,u,s,w) 
&= \frac{
m\brac{\xi_1-u,0,s_1,w}\ 
m\brac{\xi_2-u,s_2,s_3,w}\ 
m\brac{\xi_3-u,s_2,s_3,w}\ 
} {\sabs{w}^{2}}
\end{align}
and
\[
m(a,s,t,w) := \sabs{\sabs{a+sw}^{-\frac{1}{2}}-\sabs{a+tw}^{-\frac{1}{2}}}.
\]
The characteristic behavior of $k$ is as follows:
The factors $m(\cdot,\cdot,\cdot,w)$ will behave like $\abs w^\delta$ in a neighborhood of $w=0$
such that they somewhat absorb the singular behaviour of $\abs{w}^{-2}$, that is, $k$ becomes integrable around $w=0$. This is an effect very similar to the behaviour of $H_{s}(\cdot,\cdot)$, see Lemma~\ref{lem:EstimateForH12s}, as developed in \cite{SchikorraIntBoundFrac11}.

More precisely, we will derive the estimate
\begin{subequations}\label{eq:k-est}
\begin{align}
 &\int_{[-1,1]^3} {k(\xi,u,s,w)}\d s \nonumber\\
 &\aleq \abs w^{3\delta-2}\sabs{\xi_1-u}^{-1/2-\delta}
        \sum_{\br{\s(2),s(3)}\in\set{(2,3),(3,2)}}\sabs{\xi_{\sigma(2)}-u}^{-1/2-\delta}\int_{-1}^1\sabs{\xi_{\sigma(3)}-u+tw}^{-1/2-\delta}\d t
        \label{eq:typI}\\
 &\quad{}+\abs w^{3\delta-2}\sabs{\xi_1-u}^{-1/2-\delta}
        \int_{-1}^1\sabs{\xi_{2}-u+tw}^{-1/2-\delta}\sabs{\xi_{3}-u+tw}^{-1/2-\delta}\d t.
        \label{eq:typII}
\end{align}
\end{subequations}
We start with some abstract treatment of $m$.
\setlength{\multlinegap}{0ex}%
\begin{multline}\label{eq:est:chi1}
 \text{In \textbf{case~1},}\hfill
 \max\br{\sabs{a+sw},\sabs{a+tw}}\ge2\sabs{s-t}\abs w,\hfill
\end{multline}
we obtain
\begin{equation}\label{eq:est:chi1apsweqaptw}
 \sabs{a+sw} \aeq \sabs{a+tw}.
\end{equation}
Applying the mean value theorem, for any $\delta\in(0,1)$ we arrive at
\begin{align}
  m(a,s,t,w)
  &\aleq \max\br{\sabs{a+sw}^{-3/2},\sabs{a+tw}^{-3/2}}\sabs{s-t}\sabs{w} \nonumber\\
  &\aleq \max\br{\sabs{a+sw}^{-1/2-\delta},\sabs{a+tw}^{-1/2-\delta}}\max\br{\sabs{a+sw}^{-1+\delta},\sabs{a+tw}^{-1+\delta}}\sabs{s-t}\sabs{w} \nonumber\\
  &\refeq[\aleq]{est:chi1}\max\br{\sabs{a+sw}^{-1/2-\delta},\sabs{a+tw}^{-1/2-\delta}}\underbrace{\sabs{s-t}^\delta}_{\le2^\delta}\sabs{w}^\delta \nonumber\\
  &\refeq[\aleq]{est:chi1apsweqaptw}\min\br{\sabs{a+sw}^{-1/2-\delta},\sabs{a+tw}^{-1/2-\delta}}\sabs{w}^\delta. \label{eq:est:chi1roughest}
\end{align}
\begin{multline}\label{eq:est:chi2}
 \text{In \textbf{case~2},}\hfill
 \max\br{\sabs{a+sw},\sabs{a+tw}}\le2\sabs{s-t}\abs w,\hfill
\end{multline}
we immediately obtain
\begin{align}
  m(a,s,t,w)
  &\aleq \max\br{\sabs{a+sw}^{-1/2},\sabs{a+tw}^{-1/2}} \nonumber\\
  &\aleq \max\br{\sabs{a+sw}^{-1/2-\delta},\sabs{a+tw}^{-1/2-\delta}}\underbrace{\sabs{s-t}^\delta}_{\le2^\delta}\sabs{w}^\delta. \label{eq:est:chi2roughest}
\end{align}
We begin with the first factor in~\eqref{eq:k}. In case~1 we always have
\begin{equation}\label{eq:x11}
 m(\xi_1-u,0,s_1,w) \aleq \sabs{\xi_1-u}^{-1/2-\delta}\abs w^\delta.
\end{equation}
In case~2 we have either $\sabs{\xi_1-u+s_1w}\ge\tfrac12\sabs{\xi_1-u}$ which immediately results in~\eqref{eq:x11}
or the opposite
\begin{equation}\label{eq:x11a}
 \sabs{\xi_1-u+s_1w}\le\tfrac12\sabs{\xi_1-u}
\end{equation}
which leads to
\begin{equation}\label{eq:est:intlchi12ds}
 \begin{split}
  \int_{-1}^1 \chi_{\sabs{\xi_1-u+s_1w}\le\sabs{\xi_1-u}}m(\xi_1-u,0,s_1,w)\d s_1
  &\aleq \abs w^\delta \int_{\sabs{\xi_1-u+s_1w}\le\sabs{\xi_1-u}} \sabs{\xi_1-u+s_1w}^{-1/2-\delta}\d s_1 \\
  &\aleq \abs {w}^{\delta-1} \int_{\sabs{\sigma}\le\sabs{\xi_1-u}} \sabs{\sigma}^{-1/2-\delta}\d\sigma 
  \aleq \abs {w}^{\delta-1} \sabs{\xi_1-u}^{1/2-\delta} \\
  &\refeq[\aleq]{est:chi12aleqw} \sabs{\xi_1-u}^{-1/2-\delta}\abs {w}^\delta,\qquad\text{for }\delta\in(0,\tfrac12).
 \end{split}
\end{equation}
Here we made use of the fact that, given case~2 for $a:=\xi_1-u$ and~\eqref{eq:x11a},
\begin{align*}
 \sabs{a} &\leq \min \br{\sabs{a+sw} + \sabs{s}\sabs{w}, \sabs{a+tw} + \sabs{t}\sabs{w}}
 \leq \min \br{\sabs{a+sw}, \sabs{a+tw} }+\sabs{w}\\
 &\refeq[\leq]{x11a} \;\tfrac12{\sabs{a}}+\sabs{w},
\end{align*}
implies
\begin{equation}\label{eq:est:chi12aleqw}
 \sabs{a} \leq  2\sabs{w}.
\end{equation}
Applying~\eqref{eq:est:chi1roughest} and~\eqref{eq:est:chi2roughest}, we arrive at
\begin{equation}\label{eq:k-1}
 \begin{split}
 &\int_{[-1,1]^3} {k(\xi,u,s,w)}\d s\\
 &\aleq \abs w^{\delta-2}\sabs{\xi_1-u}^{-1/2-\delta}\iint_{[-1,1]^2}m(\xi_2-u,s_2,s_3,w)m(\xi_3-u,s_2,s_3,w)\d s_2\d s_3 \\
 &\aleq \abs w^{3\delta-2}\sabs{\xi_1-u}^{-1/2-\delta}
  \iint_{[-1,1]^2}\mu_2\br{\sabs{\xi_2-u+s_2w}^{-1/2-\delta},\sabs{\xi_2-u+s_3w}^{-1/2-\delta}} \cdot \\
 &\qquad\qquad\qquad\qquad\qquad\qquad{}\cdot\mu_3\br{\sabs{\xi_3-u+s_2w}^{-1/2-\delta},\sabs{\xi_3-u+s_3w}^{-1/2-\delta}}\d s_2\d s_3
 \end{split}
\end{equation}
where
\[ \mu_i=\mu_i\br{\xi_i-u,s_2,s_3,w}\in\set{\min,\max}, \qquad i=2,3, \]
depending on the respective case.
If case~1 holds for at least one of the two factors in the integrand, say the first one, we may choose the argument of
\[ \mu_2\br{\sabs{\xi_2-u+s_2w}^{-1/2-\delta},\sabs{\xi_2-u+s_3w}^{-1/2-\delta}}\]
which contains the same integration variable
as the second one. This results in terms of type~\eqref{eq:typII}.
If, however, case~2 applies to both factors, the integral in~\eqref{eq:k-1} is bounded by
\begin{align*}
 &\iint_{[-1,1]^2}\br{\sabs{\xi_2-u+s_2w}^{-1/2-\delta}+\sabs{\xi_2-u+s_3w}^{-1/2-\delta}}\cdot\\
 &\qquad\quad{}\cdot\br{\sabs{\xi_3-u+s_2w}^{-1/2-\delta}+\sabs{\xi_3-u+s_3w}^{-1/2-\delta}}\d s_2\d s_3.
\end{align*}
Expanding the integrand, the terms
$\sabs{\xi_2-u+s_iw}^{-1/2-\delta}\sabs{\xi_3-u+s_iw}^{-1/2-\delta}$, $i=2,3$, lead us to~\eqref{eq:typII}.
For the two remaining terms we may separate the integrals which gives
\[ \sum_{\br{\s(2),s(3)}\in\set{(2,3),(3,2)}}
   \int_{-1}^1\sabs{\xi_{\sigma(2)}-u+s_2w}^{-1/2-\delta}\d s_2
   \int_{-1}^1\sabs{\xi_{\sigma(3)}-u+s_3w}^{-1/2-\delta}\d s_3. \]
One integral is kept in order to arrive at~\eqref{eq:typI}, the other one is treated analogously to~\eqref{eq:x11a} and~\eqref{eq:est:intlchi12ds}.
In order to estimate $\Gamma(u)$,
we obtain thus for $\delta_{1},\delta_{2} \in (0,\frac{1}{2})$
\[
\begin{ma}
 &&\intl_{\R^3}\ \intl_{(-1,1)^3} F(\xi_1)\ F(\xi_2)\ F(\xi_3)\ {k(\xi,u,s,w)}\d s \d\xi\\
&\aeq& \intl_{-1}^1 \sabs{w}^{-1+(3\delta-1)} \sabs{\lapms{\frac{1-2\delta}{2}} {F}(u)}^2\ \lapms{\frac{1-2\delta}{2}} {F}(u-tw)\d t\\
&&+ \intl_{-1}^1 \sabs{w}^{-1+(3\delta-1)} \lapms{\frac{1-2\delta}{2}} {F}(u)\ \sabs{\lapms{\frac{1-2\delta}{2}} {F}(u-tw)}^2\d t,
\end{ma}
\]
which implies
\[
\begin{ma}
\Gamma(u) &\aleq&  \intl_{-1}^1 \intl_{\tilde{w}} t^{2-3\delta}\ \sabs{\tilde{w}}^{-1+(3\delta-1)} \sabs{\lapms{\frac{1-2\delta}{2}} {F}(u)}^2\ \lapms{\frac{1-2\delta}{2}} {F}(u-\tilde{w})\  t^{-1} \d\tilde{w}\d t\\
&& + \intl_{-1}^1 \intl_{\tilde{w}}t^{2-3\delta}\ \sabs{\tilde{w}}^{-1+(3\delta-1)} \lapms{\frac{1-2\delta}{2}} {F}(u)\ \sabs{\lapms{\frac{1-2\delta}{2}} {F}(u-\tilde{w})}^2\ t^{-1} \d\tilde{w}\d t\\

&\overset{\delta \in(\frac13, \frac{1}{2})}{\aeq}& \intl_{w} \sabs{w}^{-1+(3\delta-1)} \sabs{\lapms{\frac{1-2\delta}{2}} {F}(u)}^2\ \lapms{\frac{1-2\delta}{2}} {F}(u-w)\d w\\
&& + \intl_{w} \sabs{w}^{-1+(3\delta-1)} \lapms{\frac{1-2\delta}{2}} {F}(u)\ \sabs{\lapms{\frac{1-2\delta}{2}} {F}(u-w)}^2\d w\\

&\aeq&  \sabs{\lapms{\frac{1-2\delta}{2}} {F}(u)}^2\ \lapmsv{4\delta-1}{F}(u)+\lapms{\frac{1-2\delta}{2}} {F}(u)\ \lapms{3\delta-1}\sabs{\lapms{\frac{1-2\delta}{2}} {F}(u)}^2.
\end{ma}
\]
Setting $\delta := \frac{5}{12}$, this is \eqref{eq:Gammauest}.

\subsection{
Proof of Lemma~\ref{la:rhsests}}
Plugging together \eqref{eq:antisympde}, Lemma \ref{lem:CriticalTerm}, and Lemma \ref{la:lhs}
we get for small $s$ and some $\theta >0$ that
\begin{align*}
 &\int \dsolnt_i \omega_{ij}\ \lapv \dsolnt_j\ \lapv \varphi \\
 &\aleq \vrac {\varphi}_{\infty}\ \vrac{\Gamma}_{1,B_r}
+ \vrac{\lapms{s} \lapv \varphi}_{(\frac{2}{1-2s},1)} \cdot\\
&\qquad{}\cdot\brac{r^{\fracm{2}} + 
\|\lapv  \dsolnt\|_{(2,\infty),B_{\Lambda r}}^2 + \norm{\lapv  \dsolnt}_{(2,\infty),\R}
    \sum_{k=2}^\infty \Lambda^{-\theta (k-1)} \|\lapv   \dsolnt\|_{(2,\infty), B_{\Lambda^kr}} 
}.
\end{align*}
By the estimate of $\Gamma$ from Lemma \ref{la:T2est}, for $F := \sabs{\lapv \dsolnt}$,
\[
\begin{ma}
 \vrac{\Gamma}_{1,B_r} &\aleq& \Vrac {\sabs{\lapms{\frac{1}{12}} \sabs{F}}^2\ \lapms{\frac{1}{3}}\sabs{F}+\lapms{\frac{1}{12}} \sabs{F}\ \lapms{\frac{1}{4}}\br{\sabs{\lapms{\frac{1}{12}} \sabs{F}}^2} }_{1,B_r}\\
&\aleq& \Theta\  \brac{\Vrac{\lapms{\frac{1}{3}}\sabs{F}}_{(6,\infty),B_r} + \Vrac{\lapms{\frac{1}{12}}\sabs{F}}_{(\frac{12}{5},\infty),B_r}},
\end{ma}
\]
where
\[
 \Theta := \Vrac{\lapms{\frac{1}{12}} \sabs{F}}_{(\frac{12}{5},2),B_r}^2 + \Vrac{ \lapms{\frac{1}{4}} \br{\sabs{\lapms{\frac{1}{12}} \sabs{F}}^2}  }_{(\frac{12}{7},1),B_r}.
\]
Observe,
\[
\begin{ma}
 \vrac{ \lapms{\frac{1}{3}} {F}}_{(6,\infty)} &\aleq& \Vert F \Vert_{(2,\infty)} \\
\vrac{ \lapms{\frac{1}{12}} {F}}_{(\frac{12}{5},2)} &\aleq& \Vert F \Vert_{2},
\end{ma}
\]
and because $\frac{5}{12} + \frac{5}{12} + \frac{1}{6} = 1$,
\[
\begin{ma}
\vrac{ \lapms{\frac{1}{12}} {F}}_{(\frac{12}{5},q)} &\aleq& \Vert F \Vert_{(2,q)}, \\
\vrac{ \lapms{\frac{1}{4}}\sabs{\lapms{\frac{1}{12}} {F}}^2 }_{(\frac{12}{7},1)} &\aleq& \vrac{\sabs{\lapms{\frac{1}{12}} {F}}^2 }_{(\frac{6}{5},1)} = \vrac{\sabs{\lapms{\frac{1}{12}} {F}} }_{(\frac{12}{5},2)}^2 \aleq \Vert F \Vert_2.
\end{ma}
\]
Consequently, $\Theta$ is uniformly small, if $r$ is small enough. 
In order to conclude, it only remains to apply Lemma~\ref{lem:QuasiLocalityOfNorms} to $f:= \sabs{F}$.\qed

\section{Bootstrapping: Proof of Theorem \ref{thm:bootstrapping}}\label{sec:bootstrapping}
In Theorem \ref{th:reg:1alpha} we have shown, that $\lapv \dsoln \in L^p$ for some $p > 2$. We now work with Bessel-potential / Sobolev spaces $H^{s,q}$, cf. \cite{RS,Tartar2007,Triebel}, and the fact that $\lapv \dsoln \in L^p$ readily implies that $\dsoln \in H^{\frac{3}{2},\tilde{p}}$ for some $\tilde{p} \in (2,p)$. The proof of Theorem~\ref{thm:bootstrapping} relies on the decomposition of the first variation
\[
 \delta E^{(2)}(\g,h) =  2\lim_{\varepsilon \searrow 0} \iint_{U_{\varepsilon}}
\left(
\frac {\left\langle \g'(u), h'(u) \right\rangle}{|\g(u+w)-\g(u)|^2}
-\frac {\left\langle \g(u+w)- \g(u), h(u+w)-h(u) \right\rangle}{|\g(u+w)-\g(u)|^4} 
\right) \d w \d u,
\]
$U_{\varepsilon}$ as in~\eqref{eq:Ue}, into 
\begin{equation*}
2\br{Q(\g,h) - T_1 (\g, h ) - T_2 (\g,h)}.
\end{equation*}

For a stationary point of the M\"obius energy we have
\begin{equation*}
 Q(\g,h)= T(\g,h):= T_1(\g,h) + T_2(\g,h)
\end{equation*}
for all $h \in H^{3/2}(\mathbb R/ \mathbb Z, \mathbb R^n)$. 

Let us bring these terms in a common form. Using
\begin{multline*}
 \frac 1 {|\g(u+w) - \g(u)|^\alpha} - \frac 1 {|w|^\alpha} 
  = \frac {|w|^\alpha}{|\g(u+w)-\g(u)|^\alpha} 
     \left(\frac {1- \frac {|\g(u+w)-\g(u)|^\alpha}{|w|^\alpha}} {|w|^\alpha} \right) \\
  = G^\alpha\left(\frac {\g(u+w)-\g(u)}{w}\right) \left(
     \frac {2-2\frac {|\g(u+w)-\g(u)|^2}{|w|^2}} {|w|^\alpha} \right) \\
  = \int_{0}^1 \int_{0}^1 G^\alpha\left(\frac {\g(u+w)-\g(u)}{w}\right) \left(
  \frac {|\g'(u+\tau_1 w) - \g'(u+\tau_2 w)|^2}{|w|^\alpha}  \right) d\tau_1 d \tau_2
\end{multline*}
where
\begin{align*}
 G^\alpha(z) := \frac 1 {2 |z|^\alpha} \cdot \frac {1- |z|^\alpha}{1 - |z|^2}
\end{align*}
is an analytic function away from the origin for $\alpha\ge 2$.
We hence get
\begin{align*}
  T_1(\g,h) &= -\int_0^1 \int_0^1 T^{2}_{0,0,\tau_1, \tau_2}(h) d\tau_1 d\tau_2 \\
 T_2(\g,h) &= \int_0^1 \int_0^1 \int_0^1 \int_0^1 T^{4}_{s_1,s_2, \tau_1, \tau_2} (h) d\tau_1 d\tau_2 ds_1 ds_2
\end{align*}
where
\begin{multline*}
 T^{\alpha}_{s_1, s_2, \tau_1, \tau_2}(h) \\:= \int_0^1 \int_{-1/2}^{1/2} G^{\alpha} 
\left(\frac {\g(u+w) - \g(u)}{w} \right) \frac {|\g'(u+\tau_1 w) -\g'(u+\tau_2 w)|^2}{w^2} 
\g'(u+s_1 w) h'(u+s_2 w) dw \ du.
\end{multline*}

In the rest of this section, we will derive some estimates for the linear operators $T^\alpha_{s_1,s_2, \tau_1, \tau_2}$
that do not depend on $s_1,s_2, \tau_1,$ and $\tau_2$.

For this task, we will work with the Besov spaces $B^{s}_{p,q}$.
Given $s \in (0,1)$ and $p,q \in [1,\infty)$ one way to define the norm on these spaces 
is to set
\begin{equation*}
 |f|_{B^{s}_{p,q}} := \left(\;\int_{-1/2}^{1/2} 
  \frac {\left(\int_{\mathbb R / \mathbb Z}|f(u+w)-f(u)|^p du \right)^{q/p}}{|w|^{1+sq}} dw\right)^{1/q} 
\end{equation*}
 and to put 
\begin{equation*}
 \|f\|_{B^{s}_{p,q}} := \|f\|_{L^p} + |f|_{B^{s}_{p,q}}.
\end{equation*}
The Besov space $B^{s}_{p,q}(\R / \Z, \R^\tdim)$ then consists of all functions $f\in L^p$ with
$|f|_{B^s_{p,q}} < \infty$.

Apart from this definition we just need the Sobolev embedding
\begin{equation*}
 H^{\tilde s, \tilde p} \subset B^{s}_{p,q} 
\end{equation*}
if $s < \tilde s$ and $s- \frac 1 p < \tilde s - \frac 1 {\tilde p}$ which can be found
in textbooks like, e.g., \cite{Triebel,RS,Tartar2007}.
We also refer to~\cite{DiNezza}.

The proof relies furthermore on the following rules for 
Bessel potential spaces.

\begin{lemma}[Fractional Leibniz rule~\cite{CM}, {\cite[Lem.~5.3.7/1~(i)]{RS}}
] \label{lem:LeibnizRule}
Let $f \in H^{s,p}(\mathbb R / \mathbb Z, \mathbb R^n)$, $g \in H^{s,q}(\mathbb R / \mathbb Z, \mathbb R^n)$
with $s >0$, $p,q,r \in (1,\infty)$ and $1/p + 1/q = 1/r$.
 
Then $fg \in H^{s,r}(\mathbb R / \mathbb Z, \mathbb R^n) $ and
\begin{equation*}
 \|fg\|_{H^{s,r}} \leq C \left( \|f\|_{H^{s,p}} \|g\|_{L^q} + \|f\|_{L^p} \|g\|_{H^{s,q}}\right).
\end{equation*}

\end{lemma}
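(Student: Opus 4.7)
The plan is to prove the fractional Leibniz rule via Littlewood-Paley decomposition and Bony's paraproduct, since this is the cleanest route that localizes the interaction of frequencies in $f$ and $g$. First, I would fix a Littlewood-Paley partition of unity $\{\Delta_j\}_{j\ge-1}$ on $\R/\Z$ and use the standard characterization
\[
 \|u\|_{H^{s,p}} \approx \Big\|\Big(\sum_{j\ge-1} 2^{2js} |\Delta_j u|^2\Big)^{1/2}\Big\|_{L^p}, \qquad p\in(1,\infty), \ s>0,
\]
so that membership in $H^{s,r}$ is reduced to an $L^r$-estimate of a square function. Splitting the product via Bony's decomposition,
\[
 fg \;=\; T_f g \,+\, T_g f \,+\, R(f,g),
\]
where $T_f g := \sum_j S_{j-2}f\,\Delta_j g$ is the low-high paraproduct and $R(f,g) := \sum_{|j-k|\le 2}\Delta_j f\,\Delta_k g$ is the diagonal remainder, then reduces the claim to separate estimates on the three pieces.

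For $T_f g$, the frequency support of $S_{j-2}f\,\Delta_j g$ is localized in an annulus of scale $2^j$, so
\[
 \Delta_l(T_f g) \;\approx\; \sum_{|j-l|\le N} S_{j-2}f\,\Delta_j g
\]
for a fixed constant $N$. Pulling the weight $2^{ls}$ inside, one gets pointwise
\[
 \Big(\sum_l 2^{2ls}|\Delta_l(T_f g)|^2\Big)^{1/2} \;\lesssim\; \mathcal{M}f \cdot \Big(\sum_j 2^{2js}|\Delta_j g|^2\Big)^{1/2},
\]
where $\mathcal{M}$ is the Hardy-Littlewood maximal operator. Taking $L^r$-norms and applying Hölder with $1/p+1/q=1/r$ together with the $L^p$-boundedness of $\mathcal{M}$ gives $\|T_f g\|_{H^{s,r}}\lesssim \|f\|_{L^p}\|g\|_{H^{s,q}}$. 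The term $T_g f$ is handled symmetrically, producing the companion bound $\|T_g f\|_{H^{s,r}}\lesssim \|g\|_{L^q}\|f\|_{H^{s,p}}$.

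The main obstacle is the remainder $R(f,g)$, because the frequency supports of $\Delta_j f\,\Delta_k g$ with $|j-k|\le 2$ are not annular but only contained in a ball of radius $\sim 2^j$, so $\Delta_l(\Delta_j f\,\Delta_k g)$ may be nonzero for all $l\le j+c$. Here I would exploit that $s>0$: writing
\[
 2^{ls}|\Delta_l R(f,g)| \;\lesssim\; 2^{ls}\sum_{j\ge l-c}\sum_{|j-k|\le 2}|\Delta_j f|\,|\Delta_k g|,
\]
and absorbing $2^{ls}$ into a geometric series via $2^{ls}=2^{(l-j)s}2^{js}$, the factor $2^{(l-j)s}$ with $l-j\le c$ provides summable decay. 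Pairing one of the factors with its $2^{js}$-weight (putting $g$ into $H^{s,q}$) and bounding the other by the maximal function, Hölder in $L^r$ finishes the piece with constant $\lesssim\|f\|_{L^p}\|g\|_{H^{s,q}}$; symmetrically one also obtains $\lesssim\|g\|_{L^q}\|f\|_{H^{s,p}}$.

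An alternative route, closer to the Coifman-Meyer commutator calculus cited in the hypotheses, would be to estimate the commutator $[(-\Delta)^{s/2},f]g$ directly via symbol analysis, bound it by $\|f\|_{H^{s,p}}\|g\|_{L^q}$, and then combine with the trivial identity $(-\Delta)^{s/2}(fg)=f(-\Delta)^{s/2}g+[(-\Delta)^{s/2},f]g$. Either way, the key analytical ingredient is the $L^p$-boundedness of the Hardy-Littlewood maximal function together with the Littlewood-Paley square function characterization of $H^{s,r}$, and the essential difficulty is the high-high interaction in $R(f,g)$; both approaches are equivalent modulo the technical setup. Since the paper only cites this lemma and uses it as a black box, I would reference \cite{CM,RS} rather than reproducing the paraproduct argument in full.
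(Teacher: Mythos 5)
The paper does not prove this lemma; it is quoted as a known result from Coifman--Meyer~\cite{CM} and Runst--Sickel~\cite[Lem.~5.3.7/1~(i)]{RS} and used as a black box, exactly as you observe in your final paragraph. Your sketch via Bony's paraproduct decomposition is the standard modern route to this estimate and is the right idea in outline; it is genuinely ``a proof'' where the paper offers only a citation.

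Two technical points deserve a warning if you ever wanted to turn this sketch into a complete argument. First, in the paraproduct piece $T_f g$, the step from $\Delta_l(T_f g)$ to a pointwise bound involving $\mathcal M f$ needs a vector-valued maximal estimate: after replacing $\Delta_l(S_{j-2}f\,\Delta_j g)$ by $\mathcal M(S_{j-2}f\,\Delta_j g)$ and using $|S_{j-2}f|\lesssim\mathcal M f$, the $\ell^2$-sum over $l$ is controlled only after invoking the Fefferman--Stein inequality $\|(\sum_j|\mathcal M h_j|^2)^{1/2}\|_{L^r}\lesssim\|(\sum_j|h_j|^2)^{1/2}\|_{L^r}$; the scalar $L^p$-boundedness of $\mathcal M$ alone does not close the estimate in the Triebel--Lizorkin scale $H^{s,r}=F^s_{r,2}$. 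Second, for the remainder $R(f,g)$ the shift $2^{ls}=2^{(l-j)s}2^{js}$ with $l\le j+c$ does give geometric decay for $s>0$, but again the resulting double sum sits inside an $\ell^2$-norm in $l$ and an $L^r$-norm in $x$, and one must either pass through a Besov endpoint and use the embedding $B^s_{r,\min(r,2)}\hookrightarrow F^s_{r,2}$ (valid here since $s>0$ lets you trade a little regularity) or again invoke Fefferman--Stein. You flagged $R(f,g)$ as the essential difficulty, which is correct; the missing ingredient by name is the vector-valued maximal inequality. Finally, since the lemma is stated on $\R/\Z$, the argument should be run with periodic Littlewood--Paley projections, but that is purely cosmetic. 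None of this affects the verdict: your approach is sound, it is simply not the one taken by the paper, which takes none.
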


For the following statement, one mainly has to treat $\norm{(D^k\psi)\circ f}_{\W[\s,p]}$ for $k\in\N\cup\set0$ and $\s\in(0,1)$
which is e.~g.\@ covered by~\cite[Thm.~5.3.6/1~(i)]{RS}.

\begin{lemma}[Fractional chain rule]\label{lem:chain}
 Let $f\in\W[s,p](\mathbb R / \mathbb Z, \mathbb R^n)$, $s > 0$, $p \in (1,\infty)$.
 If $\psi\in C^\infty(\R)$ is globally Lipschitz continuous and $\psi$ and all its derivatives vanish at~$0$ then $\psi\circ f\in\W[s,p]$ and
 \[ \|\psi \circ f\|_{\W[s,p]} \le C\|\psi\|_{C^{\scriptstyle k}}\|f\|_{\W[s,p]} \]
 where $k$ is the smallest integer greater than or equal to $s$.
\end{lemma}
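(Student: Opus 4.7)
My plan is to reduce the statement to the case $s\in(0,1)$, which is exactly the content of \cite[Thm.~5.3.6/1~(i)]{RS} applied to $\psi\in C^1(\R)$ with $\psi(0)=0$, and to bootstrap from there to arbitrary $s$ via Faà di Bruno together with the fractional Leibniz rule already at hand in Lemma~\ref{lem:LeibnizRule}.

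For $s\ge 1$ I would write $s=k-1+\sigma$ with $k=\lceil s\rceil$ and $\sigma\in(0,1]$ (the case $\sigma=1$ corresponding to integer $s$). Using the equivalent norm $\|F\|_{\W[s,p]}\aeq\|F\|_{L^p}+\|F^{(k-1)}\|_{\W[\sigma,p]}$ for $\sigma\in(0,1)$, respectively $\|F\|_{L^p}+\|F^{(k)}\|_{L^p}$ for $\sigma=1$, the task reduces to bounding the $(k-1)$-th derivative of $\psi\circ f$ in $\W[\sigma,p]$. Faà di Bruno's formula expresses it as a finite linear combination
\[
 (\psi\circ f)^{(k-1)}=\sum_{\vec m}c_{\vec m}\,(\psi^{(|\vec m|)}\circ f)\prod_{j=1}^{k-1}(f^{(j)})^{m_j},
\]
where $\vec m=(m_1,\dots,m_{k-1})$ runs over multiindices with $\sum j\,m_j=k-1$ and $|\vec m|=\sum m_j$.

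Each scalar factor $\psi^{(|\vec m|)}\circ f$ still satisfies the hypotheses of the lemma at level $\sigma<1$ (smooth, Lipschitz, vanishing at~$0$ together with all derivatives), with $C^1$-norm controlled by $\|\psi\|_{C^k}$, so the base case provides $\|\psi^{(|\vec m|)}\circ f\|_{\W[\sigma,p]}\le C\|\psi\|_{C^k}\|f\|_{\W[\sigma,p]}$. The polynomial factors $\prod_j(f^{(j)})^{m_j}$ I would handle by iterating Lemma~\ref{lem:LeibnizRule}, placing the $\sigma$ fractional derivative on one factor at a time and absorbing the remaining ones through the one-dimensional Sobolev embeddings $\W[s-j,p]\hookrightarrow L^\infty$ (valid when $s-j>1/p$); the top-order factor $f^{(k-1)}$ already sits in $\W[\sigma,p]$ by assumption. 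Combining these estimates and summing over~$\vec m$ yields the asserted bound.

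The main obstacle will be the multilinear bookkeeping: one has to distribute the $\sigma$ fractional derivative among the Faà di Bruno factors so that every resulting term has a norm that is actually controlled, and to treat carefully the borderline situations in which the Sobolev embeddings are sharp (such as $s-j=1/p$). The hypothesis that $\psi$ and all its derivatives vanish at the origin is precisely what ensures that each composition $\psi^{(|\vec m|)}\circ f$ defines a \emph{bounded} operator $\W[\sigma,p]\to\W[\sigma,p]$ with no constant term escaping, which is what allows the induction on~$k$ to close cleanly.
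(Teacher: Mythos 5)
Your proposal takes essentially the same route as the paper: the paper's entire proof of this lemma is the one-sentence remark preceding it, which reduces matters to controlling $\|(D^k\psi)\circ f\|_{H^{\sigma,p}}$ for $k\in\N\cup\{0\}$ and $\sigma\in(0,1)$ and cites~\cite[Thm.~5.3.6/1~(i)]{RS} for that, leaving the reduction (Fa\`a di Bruno plus the fractional Leibniz rule of Lemma~\ref{lem:LeibnizRule}) implicit. Your sketch simply spells out that reduction, so it matches the paper's intent; the bookkeeping issues you flag (distributing the $\sigma$-derivative, borderline Sobolev embeddings, and the implicit need for $\psi^{(j)}$ bounded for $j\le k$ so that $\|\psi\|_{C^k}<\infty$) are real but routine and are equally elided by the paper.
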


The key to the proof of Theorem~\ref{thm:bootstrapping} is the following lemma.

\begin{lemma} \label{lem:basicEstimate}
Let $\g \in H^{\frac 3 2 + \beta_0,q} (\mathbb R / \mathbb Z, \mathbb R^n)$,
$\alpha\ge2$,
and $\beta_0 \ge \beta \ge 0$, $p,q \in (1, \infty)$ be such that 
$\beta_0 - 1/q > \beta - \frac 1 {2p}$.
Then for all $\tau_1, \tau_2, s_1\in \mathbb [0,1]$ the function
\begin{gather*}
g(u):= \int_{-1/2}^{1/2} G^{\alpha}\left(\frac {\g(u+w) - \g(u)}{w} \right) \frac{|\g'(u+\tau_1 w) - \g'(u+\tau_2 w)|^2}{w^2} \g'(u+s_1 w)
dw 
\end{gather*}
is in $H^{\beta,p}$. Furthermore, there is a constant $C<\infty$ depending on
$\|\g\|_{H^{3/2 + \beta_0 ,q}}$ and $\alpha$, but not on $\tau_1, \tau_2$, and $s_1$, such that
\begin{equation*}
 \|g\|_{H^{\beta,p}} \leq C.
\end{equation*}
\end{lemma}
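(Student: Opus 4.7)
The plan is to estimate $\|g\|_{H^{\beta,p}}$ by passing the norm inside the $w$-integral via Minkowski's inequality, exploiting the hypothesis $\beta_{0}-\tfrac{1}{q}>\beta-\tfrac{1}{2p}$ to extract a power $|w|^{\mu}$ of improved smoothness that overpowers the singular $|w|^{-2}$ factor.

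First I would set up the regularity of the two types of building blocks. The hypothesis is precisely the strict Sobolev-embedding condition
\[
H^{1/2+\beta_{0},q}(\R/\Z)\ \hookrightarrow\ H^{1/2+\beta+\mu,2p}(\R/\Z)
\]
for some $\mu>0$, so $\g'\in H^{1/2+\beta+\mu,2p}\subset L^{\infty}$. The scalar factor $F(u,w):=G^{\alpha}((\g(u+w)-\g(u))/w)$ has its argument in a fixed compact subset of $\R^{n}\setminus\{0\}$ (since $\g$ is bi-Lipschitz by Lemma~\ref{lem:Upsilon}), on which $G^{\alpha}$ is smooth. After cutting $G^{\alpha}$ off smoothly away from the origin, Lemma~\ref{lem:chain} then yields $\sup_{w}\|F(\cdot,w)\|_{H^{\beta,p}}\le C$, with $C$ depending only on $\|\g\|_{H^{3/2+\beta_{0},q}}$ and $\alpha$.

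The heart of the argument is the pointwise-in-$w$ estimate of the squared difference $A(u,w):=\g'(u+\tau_{1}w)-\g'(u+\tau_{2}w)$. Using that $\g'\in H^{1/2+\beta+\mu,2p}$ embeds into a Besov space of the same smoothness one obtains $\|A\|_{L^{2p}}\le C|w|^{\min(1,\,1/2+\beta+\mu)}$, and real interpolation between $L^{2p}$ and $H^{1/2+\beta+\mu,2p}$ then produces $\|A\|_{H^{\beta,2p}}\le C|w|^{\min(1,\,1/2+\mu)}$. The fractional Leibniz rule (Lemma~\ref{lem:LeibnizRule}) applied with $\tfrac{1}{2p}+\tfrac{1}{2p}=\tfrac{1}{p}$ then delivers
\[
\Bigl\|\tfrac{|A|^{2}}{w^{2}}\Bigr\|_{H^{\beta,p}} \ \le\ C|w|^{-2}\|A\|_{H^{\beta,2p}}\|A\|_{L^{2p}} \ \le\ C|w|^{-1+\eps}
\]
for some $\eps>0$ depending on $\mu$ and $\beta$. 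Multiplying by $F(\cdot,w)$ and by the translate $\g'(\cdot+s_{1}w)$, both bounded in $H^{\beta,p}\cap L^{\infty}$ uniformly in $w$, and iterating Lemma~\ref{lem:LeibnizRule} once more yields $\|K(\cdot,w)\|_{H^{\beta,p}}\le C|w|^{-1+\eps}$ for the full integrand $K$ of $g$.

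Minkowski's inequality for integrals then concludes
\[
\|g\|_{H^{\beta,p}} \ \le\ \int_{-1/2}^{1/2}\|K(\cdot,w)\|_{H^{\beta,p}}\,dw \ \le\ C\int_{-1/2}^{1/2}|w|^{-1+\eps}\,dw<\infty,
\]
with the bound uniform in $\tau_{1},\tau_{2},s_{1}$. The main obstacle will be organizing the iterated Leibniz applications in the Bessel-potential scale so that each of the factors $F$, $A$, $A$, $\g'(\cdot+s_{1}w)$ enters in a space Lemma~\ref{lem:LeibnizRule} can accommodate (in particular re-routing the $L^{\infty}$-bound on $\g'$ through an auxiliary Sobolev embedding, since Lemma~\ref{lem:LeibnizRule} is stated for exponents in $(1,\infty)$), and justifying the interpolation step for $A$ cleanly in the Bessel potential scale rather than the Besov scale. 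The condition $\beta_{0}-\tfrac{1}{q}>\beta-\tfrac{1}{2p}$ furnishes exactly the slack $\mu>0$ needed to make the $w$-integral converge.
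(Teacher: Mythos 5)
Your strategy is recognizably the same as the paper's up to the decisive step: pass the $H^{\beta,p}$-norm inside the $w$-integral by Minkowski, treat $G^{\alpha}$ by the chain rule (Lemma~\ref{lem:chain}), split the four-fold product by the fractional Leibniz rule (Lemma~\ref{lem:LeibnizRule}), and cash in the exponent slack $\beta_{0}-\tfrac1q>\beta-\tfrac1{2p}$ via Sobolev embedding. Where you diverge is in how the remaining $w$-integral of $\norm{\gamma'(\cdot+\tau_{1}w)-\gamma'(\cdot+\tau_{2}w)}^{2}/w^{2}$ is controlled. The paper does \emph{not} produce a pointwise-in-$w$ bound: it keeps the square intact, so that after Minkowski the integral $\int_{-1/2}^{1/2}\norm{\gamma'(\cdot+\tau_{1}w)-\gamma'(\cdot+\tau_{2}w)}_{L^{2\tilde p}}^{2}w^{-2}\,dw$ is \emph{exactly} a Besov seminorm $\seminorm{\gamma'}_{B^{1/2}_{2\tilde p,2}}^{2}$ (together with $\seminorm{\laps{\beta+1}\gamma}_{B^{1/2}_{2\tilde p,2}}^{2}$), and the Sobolev-into-Besov embedding $H^{\tilde s,\tilde p}\subset B^{s}_{p,q}$ finishes. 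This is an ``$L^2$-in-$w$'' argument, precisely calibrated to the weight $w^{-2}$. You instead aim for a \emph{pointwise}-in-$w$ bound $\norm{K(\cdot,w)}_{H^{\beta,p}}\aleq |w|^{-1+\eps}$, which requires the strictly stronger modulus-of-continuity statement $\norm{\gamma'(\cdot+h)-\gamma'}_{L^{2p}}\aleq|h|^{1/2+\nu}$ for some $\nu>0$, i.e.\ $\gamma'\in B^{1/2+\nu}_{2p,\infty}$. Both routes lead to the desired bound, and the student's version has the cosmetic advantage of dispensing with the extra parameter $\tilde p$; but the Besov$-L^2$ route is softer and is the one that matches the shape of the $w^{-2}$-weighted integral exactly.

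There are two concrete loose ends to tighten. First, the embedding $H^{1/2+\beta_{0},q}\hookrightarrow H^{1/2+\beta+\mu,2p}$ (or into $B^{1/2+\beta+\mu}_{2p,\infty}$) with $\mu>0$ requires not only the Sobolev-number inequality you quote but also that the target smoothness does not exceed the source smoothness, i.e.\ $\mu\le\beta_{0}-\beta$; Sobolev embedding never increases the smoothness index, even when integrability drops. Thus your argument genuinely needs $\beta_{0}>\beta$ strictly, which is not assumed in the lemma (when $\beta_{0}=\beta$ the hypothesis $\beta_{0}-\tfrac1q>\beta-\tfrac1{2p}$ forces $q>2p$, but $H^{1/2,q}\not\hookrightarrow H^{1/2+\mu,2p}$ for any $\mu>0$). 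Second, the interpolation producing $\norm{A}_{H^{\beta,2p}}\aleq|w|^{\min(1,1/2+\mu)}$ should not be via real interpolation between $L^{2p}$ and $H^{1/2+\beta+\mu,2p}$ (that yields Besov, not Bessel potential, spaces); it is cleaner and correct to apply $\laps{\beta}$ directly, observe $\laps{\beta}\gamma'\in H^{1/2+\mu,2p}\subset B^{1/2+\mu}_{2p,\infty}$, and read off the $L^{2p}$-modulus of $\laps{\beta}A$. Finally, as you yourself flag, the ``iterated Leibniz'' step is not yet organized so that each factor lands in a space with finite exponent: the paper's device $\tilde p\in(p,p+1)$, $\tilde q:=\tfrac{2\tilde pp}{\tilde p-p}$, which splits $\tfrac1p=\tfrac1{2\tilde p}+\tfrac1{2\tilde p}+\tfrac1{\tilde q}+\tfrac1{\tilde q}$, is exactly what is needed to replace your ``$H^{\beta,p}\cap L^{\infty}$'' placeholder by admissible Hölder exponents, and you should adopt an allocation of this kind before the proposal can be considered complete.
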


\begin{proof}
Note that
\begin{equation*}
  \|g\|_{H^{\beta , p}} \leq \int_{-1/2}^{1/2} \frac{\|g_w\|_{H^{\beta,p}}}{|w|^2} dw
\end{equation*}
where 
\begin{equation*}
 g_w(u) := G^{\alpha}\left(\frac {\g(u+w)-\g(u)} {w}\right) \g'(u+s_1 w )  |\g'(u+\tau_1 w)-\g(u+ \tau_2 w)|^2 .
\end{equation*}
Choosing some $\tilde p\in(p,p+1)$ which will be determined later on
and letting $\tilde q := 2\frac{\tilde pp}{\tilde p-p}$ leads to
\[ \frac1p = \frac1{2\tilde p} + \frac1{2\tilde p} + \frac1{\tilde q}
   + \frac1{\tilde q}. \]

Using that
\begin{equation*}
 \frac {\g(u+w)-\g(u)} {w} = \int_{0}^1 \g'(u+\tau w) d\tau,
\end{equation*}
that $\g$ is bi-Lipschitz, and that $G^\alpha$ is analytic away from the origin, 
we get according to the fractional chain rule (Lemma~\ref{lem:chain})
\begin{equation*}
 \left\|G^{\alpha}\left(\frac {\g(\cdot +w)-\g(\cdot)}{w}\right)\right\|_{H^{\beta,\tilde q}}
 \leq C \|\g\|_{H^{\beta+1,\tilde q}}
  \leq C.
\end{equation*}

Using the fractional Leibniz rule (Lemma~\ref{lem:LeibnizRule}), we derive
\begin{align*}
 \|g_w\|_{H^{\beta,p}} &\leq C
\left\|G^{\alpha}\left(\frac {\g(\cdot +w)-\g(\cdot)}{w}\right)\right\|_{H^{\beta,\tilde q}}
\|\g'\|_{H^{\beta,\tilde q}}
\|\g'(\cdot + \tau_1 w) - \g'(\cdot + \tau_2 w )\|^2_{H^{\beta,2\tilde p}}
 \\
&  \leq C \br{\|\g'(\cdot + \tau_1 w) - \g'(\cdot + \tau_2 w )\|^2_{L^{2\tilde p}} 
+ \|\laps{\beta+1}\g(\cdot + \tau_1 w) - \laps{\beta+1}\g(\cdot + \tau_2 w )\|^2_{L^{2\tilde p}} }.
\end{align*}
Hence,
\begin{align*}
  \|g\|_{H^{\beta,p}} &\leq C
    \int_{-1/2}^{1/2} \frac{\|\g'(\cdot + \tau_1 w) - \g'(\cdot + \tau_2 w )\|^2_{L^{2\tilde p}} }{w^2} dw
  \\
  & \quad + C
  \int_{-1/2}^{1/2} 
    \frac {\|\laps{\beta+1}\g(\cdot + \tau_1 w) - \laps{\beta+1}\g(\cdot + \tau_2 w )\|^2_{L^{2\tilde p}}}
  {w^2} dw
\\
&\leq C
\int_{-1/2}^{1/2} \frac{\|\g'(\cdot) - \g'(\cdot + (\tau_2 - \tau_1) w )\|^2_{L^{2\tilde p}} }{w^2} dw
 \\
& \quad + C
  \int_{-1/2}^{1/2} \frac {\|\laps{\beta+1}\g(\cdot) - \laps{\beta+1} \g(\cdot + (\tau_2 - \tau_1) w )\|^2_{L^{2\tilde p}} }{w^2} dw
\\
&\leq C |\tau_2 - \tau_1|
\int_{-1}^{1} \frac{\|\g'(\cdot) - \g'(\cdot +  w )\|^2_{L^{2\tilde p}} }{w^2} dw
 \\
& \quad + C |\tau_2-\tau_1|
  \int_{-1}^{1} \frac {\|\laps{\beta+1}\g(\cdot) - \laps{\beta+1} \g(\cdot +  w )\|^2_{L^{2\tilde p}} }{w^2} dw
\\
& \leq C \|\g'\|_{B^{1/2}_{2\tilde p,2}}^2 
 + C \|\laps{\beta+1}\g\|^2_{B^{1/2}_{2\tilde p,2}} \leq C
\end{align*}
if $\tilde p\in(p,p+1)$ is chosen so small that
\begin{equation*}
  \beta_0 - \frac 1 q > \beta - \frac 1 {2\tilde p} > \beta - \frac 1 {2p}.
\end{equation*}
This proves Lemma~\ref{lem:basicEstimate}.
\end{proof}

We use the last lemma to prove
\begin{corollary} \label{cor:basicEstimate}
 Let $\g, \beta_0, \beta$,$p$ and $q$ be as in Lemma~\ref{lem:basicEstimate} and $p'$ 
 be such that $1/p + 1/p' =1$. Then
\begin{itemize}
 \item for all $\alpha\ge2$ there is a constant $C$
  such that
  \begin{equation*}
   | T^{\alpha}_{s_1, s_2 , \tau_1, \tau_2} (h) | \leq C \|h\|_{H^{1-\beta,p'}}
  \end{equation*}
  for all $s_1, s_2, \tau_1, \tau_2 \in [0,1]$ and $h \in C^\infty$,
\item the operator
 $T(\g,\cdot) = T_1(\g,\cdot) + T_2(\g,\cdot) \in \left(H^{3/2}(\mathbb R / \mathbb Z, \mathbb R^n )\right)^\ast$
 can be extended to a bounded  operator on $ H^{1 - \beta,p'}$.
\end{itemize}
\end{corollary}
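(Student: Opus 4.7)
The plan is to reduce the estimate on $T^\alpha_{s_1,s_2,\tau_1,\tau_2}(h)$ to the duality pairing between $H^{\beta,p}$ and $H^{-\beta,p'}$, applied to the kernel supplied by Lemma~\ref{lem:basicEstimate}. First I would absorb the evaluation $h'(u+s_2 w)$ into a single point: performing the shift $v=u+s_2 w$ in the inner $u$-integral (which is legitimate because $\g$, $\g'$ and $h'$ are $1$-periodic), followed by Fubini, rewrites
\[
 T^\alpha_{s_1,s_2,\tau_1,\tau_2}(h) = \int_0^1 \tilde g(v)\,h'(v)\d v
\]
where
\[
 \tilde g(v) := \int_{-1/2}^{1/2} G^\alpha\!\left(\frac{\g(v+(1-s_2)w)-\g(v-s_2 w)}{w}\right) \frac{|\g'(v+(\tau_1-s_2)w)-\g'(v+(\tau_2-s_2)w)|^2}{w^2}\,\g'(v+(s_1-s_2)w)\,dw.
\]
Since the secant in the argument of $G^\alpha$ has the same step size $w$ as in the original definition, $\tilde g$ is of exactly the same type as the function~$g$ from Lemma~\ref{lem:basicEstimate}, but with the shift parameters translated by~$-s_2$. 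The proof of Lemma~\ref{lem:basicEstimate} only uses translation-invariance of the involved Sobolev/Besov norms, hence it goes through verbatim and provides a bound $\|\tilde g\|_{H^{\beta,p}}\le C$ with a constant $C$ depending on $\|\g\|_{H^{3/2+\beta_0,q}}$ and $\alpha$ but not on $s_1,s_2,\tau_1,\tau_2\in[0,1]$.

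Next I would invoke duality for Bessel potential spaces, namely $\bigl(H^{\beta,p}\bigr)^\ast\cong H^{-\beta,p'}$ for $1/p+1/p'=1$, together with the standard fact that differentiation $\partial_x\colon H^{1-\beta,p'}\to H^{-\beta,p'}$ is a bounded operator. This gives
\[
 |T^\alpha_{s_1,s_2,\tau_1,\tau_2}(h)| = \left|\int_0^1 \tilde g(v)\,h'(v)\d v\right| \le \|\tilde g\|_{H^{\beta,p}}\,\|h'\|_{H^{-\beta,p'}} \le C\,\|h\|_{H^{1-\beta,p'}}
\]
for all $h\in C^\infty(\R/\Z,\R^\tdim)$, which is the first bullet.

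For the second bullet I would use the representations
\[
 T_1(\g,h) = -\int_{(0,1)^2} T^2_{0,0,\tau_1,\tau_2}(h)\d\tau_1\d\tau_2, \qquad T_2(\g,h) = \int_{(0,1)^4} T^4_{s_1,s_2,\tau_1,\tau_2}(h)\d s_1\d s_2\d\tau_1\d\tau_2
\]
derived earlier in this section. Since the constant in the first bullet is uniform in the parameters, integration in $(s_1,s_2,\tau_1,\tau_2)$ preserves the bound, giving
\[
 |T(\g,h)| \le |T_1(\g,h)| + |T_2(\g,h)| \le C\,\|h\|_{H^{1-\beta,p'}}
\]
for all $h\in C^\infty$. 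Because $C^\infty(\R/\Z,\R^\tdim)$ is dense in $H^{1-\beta,p'}(\R/\Z,\R^\tdim)$ and $T(\g,\cdot)$ is linear, it extends uniquely to a bounded linear functional on $H^{1-\beta,p'}$.

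The only non-routine point is verifying that the change of variables indeed produces a kernel whose norm is bounded \emph{uniformly} in $s_2$; this is the main thing to check, but it reduces to noticing that the proof of Lemma~\ref{lem:basicEstimate} estimates everything via quantities of the form $\|\g'(\cdot+aw)-\g'(\cdot+bw)\|$ and $\|\laps{\beta+1}\g(\cdot+aw)-\laps{\beta+1}\g(\cdot+bw)\|$ whose $L^{2\tilde p}$-norms depend only on the difference $a-b$ (and are controlled by the corresponding Besov seminorms). Hence the constant really is independent of all four parameters, and no further work is needed.
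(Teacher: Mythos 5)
Your argument is correct, but it takes a slightly different route from the paper's. The paper does not change variables; instead it observes that since $\Lambda^\beta=(\id-\Delta)^{\beta/2}$ is self-adjoint and commutes with translations, one can write
\[
\int_{\R/\Z} g_w(u)\,h'(u+s_2w)\,du \;=\; \int_{\R/\Z} (\Lambda^\beta g_w)(u)\,(\Lambda^{-\beta}h')(u+s_2w)\,du,
\]
and then apply H\"older's inequality directly, so that Lemma~\ref{lem:basicEstimate} can be invoked as a black box on $g_w$ and the $L^{p'}$-norm of the translated $\Lambda^{-\beta}h'$ is unchanged. You instead shift the integration variable $v=u+s_2w$ to move the displacement from $h'$ onto the kernel, obtaining a modified kernel $\tilde g$ which is no longer literally the function of Lemma~\ref{lem:basicEstimate} (the first argument of $G^\alpha$ becomes a secant based at $v-s_2w$ rather than at $v$, and the shift parameters now range in $[-1,1]$); you then argue that the \emph{proof} of the lemma nevertheless carries over because all the norms involved are translation-invariant and only differences of shift parameters matter. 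Both routes are sound and give the same uniformity in $s_1,s_2,\tau_1,\tau_2$; the trade-off is that the paper's device keeps the lemma usable without modification, whereas yours requires re-inspecting its proof to confirm that shifted parameters cause no harm, a point you identified and verified correctly.
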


\begin{proof}
 For $h \in C^{\infty}(\mathbb R / \mathbb Z, \mathbb R^n)$ we have
 \begin{align*}
  T(\g,h) = - \int_0^1 \int_0^1 T^2_{0,0,t_1,t_2}(h) dt_1 dt_2
+ \int_0^1 \int_0^1 \int_0^1 \int_0^1 T^4_{s_1,s_2,t_1,t_2}(h) ds_1  ds_2 dt_1 dt_2
 \end{align*}
 and hence the second part is an immediate consequence of the first one.

 Let $\Lambda^s := (\id-\Delta)^{\frac s 2}$. Using that $\Lambda^\beta$ is self adjoint, we get
 \begin{align*}
  T^{\alpha}_{s_1,s_2, \tau_1, \tau_2}(h) &= \int_{-1/2}^{1/2} \int_{\mathbb R / \mathbb Z} 
  G^{\alpha}\left(\frac {\g(\cdot +w)-\g(\cdot)}{w}\right)
  \frac {|\g'(u+\tau_1 w ) - \g'(u+\tau_2 w)|^2 }{w^2} 
  \\
  & \quad \quad \quad \quad \quad \quad \quad \quad \quad \quad \quad \quad    \g'(u+s_1 w) h'(u+s_2 w)  du dw
  \\&= \int_{\mathbb R / \mathbb Z}\int_{-1/2}^{1/2}
  \frac{\left(\Lambda^{\beta}g_w \right)(u)}{w^{2}}
\left( \Lambda^{-\beta}h'\right)(u+s_2 w) dw du 
 \end{align*}
 and hence as in the proof of Lemma~\ref{lem:basicEstimate}
 \begin{equation*}
  |T_{s_1,s_2, \tau_1, \tau_2}(h)|
  \leq C \int_{-1/2}^{1/2}\frac{\norm{g_{w}}_{\W[\beta,p]}}{w^{2}}\d w\norm h_{\W[1-\beta,p']}
  \le C \norm h_{\W[1-\beta,p']}
 \end{equation*}
 where $C<\infty$ as in Lemma~\ref{lem:basicEstimate} does not depend on 
 on $s_1,s_2,\tau_1$,or $\tau_2$.
\end{proof}

Using the two statements above, we are led to the following fact from which Theorem~\ref{thm:bootstrapping}
immediately follows.

\begin{lemma}\label{lem:bootstrappingStep}
 Let $\g \in H^{3/2 + \beta_0,q}(\mathbb R / \mathbb Z, \mathbb R^n)$, $\beta_0 \geq 0$, $q\in[2,\infty)$ and
 $\beta_0 - \frac 1 q > -1/2$ 
  be a stationary point of the M\"obius energy parametrized by
  arc length. Then,
\begin{itemize}
 \item if $\beta_0 =0$, we have $\g \in H^{s}$ for all $s < 3/2 + 2(1/2-1/q)$,
 \item if $0< \beta_0 < 1/2$, we have $\g \in H^{s}$ for all $s < 3/2 + 2(\beta_0 + 1/2-1/q)$,
 \item if $\beta_0 \geq 1/2$, we have $\g \in H^{3/2+\beta_0+1/4}$.
\end{itemize}

\end{lemma}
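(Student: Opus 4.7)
\emph{Proof plan.} The strategy is to combine the Euler-Lagrange identity $Q(\g,h) = T_1(\g,h) + T_2(\g,h)$ from~\eqref{eq:knotPDE} with the duality bound on $T(\g,\cdot)$ coming from Corollary~\ref{cor:basicEstimate}, and then extract the regularity of $\g$ by inverting the principal symbol of $Q$. To this end, applying He's formula~\eqref{eq:FormulaForNorm} to the periodic extension $g = \eta\g$ employed in Section~\ref{sec:initial} yields
\[
  Q(\g, h) \;=\; \int_{\R} \lapv\g' \cdot \lapv h'\,du \;+\; R(\g, h),
\]
where the remainder $R(\g,h)$, collecting contributions from the truncation $|w|\le\tfrac12$ in~\eqref{eq:defQeps} and from the cutoff $\eta$, is controlled by $C\|\g\|_{\W}\|h\|_{L^{2}}$. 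Integrating by parts, $\int\lapv\g'\cdot\lapv h' = -\int(|D|\g')'\cdot h$, this exhibits $Q(\g,\cdot)$ as the pairing of a test function with the distribution $|D|^{3}\g$, modulo such harmless lower-order terms.

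Corollary~\ref{cor:basicEstimate} next delivers the key right-hand side bound: for every admissible pair $(\beta,p)$ with $0\le\beta\le\beta_{0}$, $p\in(1,\infty)$, and $\beta_{0}-1/q > \beta-1/(2p)$, the functional $T(\g,\cdot)$ extends to a bounded linear form on $H^{1-\beta,p'}(\R/\Z,\R^{\tdim})$, i.e.\ $T(\g,\cdot)\in H^{\beta-1,p}$. Combined with the identification above and the stationarity equation $Q(\g,h)=T(\g,h)$, this gives $|D|^{3}\g \in H^{\beta-1,p}$, which upgrades to $\g \in H^{\beta+2,p}$ (modulo an irrelevant constant) by the invertibility of $|D|$ as a Fourier multiplier of order~$1$ on mean-zero periodic distributions.

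The three cases follow by optimizing $(\beta,p)$ under the two constraints and applying the Sobolev embedding $H^{\beta+2,p}(\R/\Z) \hookrightarrow H^{s,2}(\R/\Z)$ (with $s=\beta+5/2-1/p$ when $p\le 2$ and $s=\beta+2$ when $p\ge 2$). For case~(i) ($\beta_{0}=0$), take $\beta=0$ and $p\nearrow q/2$, yielding $s\nearrow 5/2-2/q=3/2+2(1/2-1/q)$. For case~(ii) ($0<\beta_{0}<1/2$), in the regime $\beta_{0}<1/q-1/4$ (which covers in particular $q=2$), take $\beta=0$ and $p\nearrow(2(1/q-\beta_{0}))^{-1}\in(1,2)$, yielding $s\nearrow 5/2+2\beta_{0}-2/q=3/2+2(\beta_{0}+1/2-1/q)$. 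For case~(iii) ($\beta_{0}\ge 1/2$), which forces $\beta_{0}\ge 1/q$ for every admissible $q\ge 2$, take $p=2$ and $\beta=\beta_{0}-1/q+1/4-\varepsilon$; this satisfies the constraint with strict inequality and produces $\g\in H^{\beta_{0}+9/4-1/q-\varepsilon,2}\subset H^{3/2+\beta_{0}+1/4}$ since $q\ge 2$ implies $9/4-1/q\ge 7/4$.

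The main obstacle I anticipate is making the identification of $Q(\g,\cdot)$ with $|D|^{3}\g$ on the compact circle precise, together with clean bookkeeping of the lower-order remainders produced by the truncation of the $w$-integral and by the cutoff $\eta$; all such contributions are bounded linear in $h$ of order strictly lower than $H^{1-\beta,p'}$, so they are harmlessly absorbed into the right-hand side, but writing them out carefully is the one delicate technical step.
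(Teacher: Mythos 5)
Your overall strategy coincides with the paper's: combine the Euler--Lagrange identity $Q(\g,\cdot)=T(\g,\cdot)$ with Corollary~\ref{cor:basicEstimate} to control $T(\g,\cdot)$ as a linear functional, observe that $Q(\g,h)$ is, up to harmless constants, the pairing $\langle\laps{3}\g,h\rangle$, and optimize the exponents $(\beta,p)$. The paper runs the Sobolev embedding on the test-function side, writing $(H^{1-\beta,p'})^\ast\subset(H^{3/2-1/p'-\beta})^\ast$ and then reading off $\g\in H^{3/2+\beta+1/p'}$ from the Fourier coefficients; you run it on the solution side, going $\laps3\g\in H^{\beta-1,p}$, hence $\g\in H^{\beta+2,p}\hookrightarrow H^{s,2}$. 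Since $5/2+\beta-1/p=3/2+\beta+1/p'$, these are the same computation with the same exponent constraint $\beta_0-1/q>\beta-1/(2p)$, $0\le\beta\le\beta_0$, $1<p\le 2$.

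That said, a few concrete problems. (a) The detour through the cutoff $g=\eta\g$ and He's formula~\eqref{eq:FormulaForNorm} on the line is unnecessary here; Section~\ref{sec:bootstrapping} works directly on $\R/\Z$, where the periodic version of He's identity gives $Q(\g,h)\sim\sum_k|k|^3\hat\g(k)\hat h(k)$ without any truncation remainder. Invoking the cutoff imports error terms from Section~\ref{sec:initial} that the paper deliberately avoids in the bootstrap. (b) In case (ii) you restrict to $\beta_0<1/q-1/4$ and assert this ``covers in particular $q=2$''. It does not: for $q=2$ you only reach $\beta_0<1/4$, while the statement asks for all $\beta_0\in(0,1/2)$. (You have in fact located a real subtlety -- the paper's choice $1/p=2/q-2\beta_0+\varepsilon$ likewise only yields $p\le 2$ when $\beta_0\le 1/q-1/4$, and outside this subregime the stated gain $5/2+2\beta_0-2/q$ cannot be reached by the Sobolev optimization; one only gets $s<\beta_0+9/4-1/q$, which still exceeds $3/2+\beta_0$ and hence still advances the bootstrap. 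But the parenthetical remark as you wrote it is wrong and conceals the gap rather than flagging it.) (c) In case (iii) your choice $\beta=\beta_0-1/q+1/4-\varepsilon$ violates the hypothesis $\beta\le\beta_0$ of Corollary~\ref{cor:basicEstimate} whenever $q>4$, since then $1/4-1/q>0$. The paper's parameter $\beta=\beta_0-1/4$ is $q$-independent and avoids this; with $1/p=1/2$ (the printed $1/p=2/3$ appears to be a typo, as it gives only $\beta_0+19/12$) it yields the claimed $\beta_0+7/4$ up to a loss of $\varepsilon$ when $q=2$, where the strict inequality $\beta_0-1/q>\beta-1/(2p)$ forces $\beta<\beta_0-1/4$.
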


\begin{proof}
 We set
\begin{align*}
  \beta & =0, & \frac 1p &= \frac 2 q + \varepsilon, & & \text { if } \beta_0 =0, \\
  \beta &=0,& \frac 1 p &= \frac 2 q - 2\beta_0 + \varepsilon,& &\text{ if } 0< \beta_0 < 1/2, \\
  \beta &= \beta_0 - 1/4,& \frac 1 p&=2/3, & & \text{ if } \beta_0 \geq \frac 1 2
\end{align*}
 and see that in each case the exponents satisfy the assumptions for the
 preceding Corollary~\ref{cor:basicEstimate}
 for all small enough $\varepsilon >0$, so,
 for $p'$ with $\frac 1 p + \frac 1 {p'} =1$, the functional
 $T(\g,\cdot) = T_1(\g,\cdot) + T_2(\g,\cdot)$
 can be extended to an operator in 
 $\left(H^{1-\beta, p'}\right)^{\ast} \subset (H^{3/2-1/{p'}-\beta})^\ast$.
 
From the fact that $\g$ is a stationary point of the M\"obius energy we then deduce that
 \begin{equation*}
    Q(\g,\cdot) \in  (H^{3/2-1/{p'}-\beta})^\ast
 \end{equation*}
 and a comparison of the Fourier coefficients gives
 \begin{equation*}
  \g \in H^{3/2+\beta + \frac 1 {p'}}.
 \end{equation*}
Since
\begin{equation*}
 3/2+ \beta + \frac 1 {p'} = 5/2 + \beta - \frac 1 p  = 
\begin{cases} 
  3/2 + 2 (1/2-1/q)-\varepsilon &\text{if } \beta_0=0, \\
  3/2 + 2(\beta_0 + 1/2-1/q)  - \varepsilon &\text{if } 0< \beta_0 < 1 /2, \\
  3/2 + \beta_0 + 1/4 &\text{else},
\end{cases}
\end{equation*}
this proves Lemma~\ref{lem:bootstrappingStep}.
\end{proof}


\begin{appendix}
\section{Appendix}

In this section we gather some facts most of which can already be found in 
\cite{SchikorraIntBoundFrac11} in slightly different versions. The main aim is to prove 
Lemma~\ref{la:tangentialpart} and \ref{lem:CriticalTerm} which both rely on 
quasi-locality of the Riesz potential $\lapms{s}$.  Afterwards, we give an easy proof of the iteration lemma needed
to deduce Dirichlet growth. 

We will mainly deal with functions belonging to the Schwartz space $\mathcal{S}(\R)$
of rapidly decreasing smooth functions $\R\to\R$. The statements carry over to more general situations by suitable approximation arguments.

\subsection{Quasi-locality}

The essential tool apart from Sobolev inequalities is the following 
quantitative version of the \emph{quasi-locality} of the fractional Laplacian 
and the Riesz potential.

\begin{lemma}[Quasi-locality]\label{lem:QuasiLocality}
 Let $p_1,p_2,q_1,q_2 \in [1, \infty]$, where we assume that $q_2 =1$ if $p_2=1$, $s \in (-1,1)$
 and $\Omega_1, \Omega_2$ be disjoint domains with
 $d:= \dist(\Omega_1, \Omega_2) >0$ and with positive and finite Lebesgue measure. 
 Then, for any $f\in\mathcal S(\R)$, 
 \begin{equation*}
  \|\lap^{\frac{s}{2}} (f \chi_{\Omega_2})\|_{(p_1,q_1),\Omega_1} 
  \aleq  d^{-1-s} |\Omega_1|^{1/p_1} |\Omega_2|^{1-1/p_2} \|f\|_{(p_2,q_2), \Omega_2}
 \end{equation*}
  where we set
 \[
  \lap^{\frac{s}{2}} := \begin{cases}
                         \laps{s} \quad &\mbox{if $s > 0$,}\\
			  \id \quad &\mbox{if $s = 0$,}\\
			  \lapms{\abs{s}} \quad &\mbox{if $s < 0$.}\\
                        \end{cases}
 \]
\end{lemma}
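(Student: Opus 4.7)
The plan is to split into three cases according to the sign of~$s$, exploit the pointwise representations \eqref{eq:fraclap-} and \eqref{eq:fraclap+}, and finish with a Lorentz-Hölder argument on $\Omega_2$ followed by taking the Lorentz norm over $\Omega_1$. The case $s=0$ is trivial since $(f\chi_{\Omega_2})|_{\Omega_1}\equiv0$.

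For $s\in(0,1)$ I would write, for $x\in\Omega_1$,
\[
 \laps{s}(f\chi_{\Omega_2})(x) = \tilde c_s \int_{\R} \frac{(f\chi_{\Omega_2})(y)-(f\chi_{\Omega_2})(x)}{|x-y|^{1+s}}\,dy = \tilde c_s \int_{\Omega_2} \frac{f(y)}{|x-y|^{1+s}}\,dy,
\]
using crucially that $(f\chi_{\Omega_2})(x)=0$ on $\Omega_1$ so that the dangerous diagonal contribution disappears. Since $|x-y|\ge d$ on $\Omega_1\times\Omega_2$, one obtains the uniform pointwise bound
\[
 \abs{\laps{s}(f\chi_{\Omega_2})(x)} \aleq d^{-1-s}\int_{\Omega_2}\abs{f(y)}\,dy\qquad\text{for all }x\in\Omega_1.
\]
For $s\in(-1,0)$, formula \eqref{eq:fraclap-} gives
\[
 \lapms{|s|}(f\chi_{\Omega_2})(x) = c_{|s|}\int_{\Omega_2}\frac{f(y)}{|x-y|^{1-|s|}}\,dy,
\]
and again $|x-y|\ge d$ yields the same pointwise bound since $-(1-|s|)=-1-s$; the assumption $s>-1$ ensures $1-|s|>0$, which is what permits replacing $|x-y|^{-(1-|s|)}$ by $d^{-(1-|s|)}$.

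Now I would apply Hölder's inequality for Lorentz spaces \eqref{eq:hoelder} on $\Omega_2$ to estimate
\[
 \int_{\Omega_2}\abs{f(y)}\,dy = \vrac{\chi_{\Omega_2}\,f}_{L^1} \aleq \vrac{\chi_{\Omega_2}}_{(p_2',q_2'),\Omega_2}\,\vrac{f}_{(p_2,q_2),\Omega_2} \aleq |\Omega_2|^{1-1/p_2}\,\vrac{f}_{(p_2,q_2),\Omega_2},
\]
where $p_2',q_2'$ are the conjugate exponents and the endpoint convention $q_2=1$ when $p_2=1$ is used precisely to guarantee that the Hölder pairing is legitimate (in that case $q_2'=\infty$ and $\chi_{\Omega_2}\in L^{\infty,\infty}$). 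Since the right-hand side is independent of~$x$, taking the $(p_1,q_1)$-Lorentz norm on $\Omega_1$ reduces to estimating $\vrac{\chi_{\Omega_1}}_{(p_1,q_1)}\aeq|\Omega_1|^{1/p_1}$, which is a standard computation from the decreasing rearrangement of a characteristic function.

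Combining these three observations yields the claim in all cases. The only mildly delicate point is keeping track of the endpoint exponent conventions in the Lorentz-Hölder step (which is why the hypothesis singles out $p_2=1\Rightarrow q_2=1$); once this bookkeeping is handled, the geometric separation $\dist(\Omega_1,\Omega_2)=d>0$ together with the trivial vanishing of $f\chi_{\Omega_2}$ on $\Omega_1$ does essentially all the work, and the range $s\in(-1,1)$ is exactly what is needed for the two kernel estimates to go through uniformly.
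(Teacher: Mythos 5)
Your proof is correct and follows essentially the same approach as the paper: observe that for $x\in\Omega_1$ the singular part of the kernel never meets the support of $f\chi_{\Omega_2}$ (because $\dist(\Omega_1,\Omega_2)=d>0$), bound the kernel by $d^{-1-s}$, use Lorentz--H\"older on $\Omega_2$ to produce the $|\Omega_2|^{1-1/p_2}$ factor, and then use the $L^\infty$-to-$L^{(p_1,q_1)}$ H\"older step on $\Omega_1$ to produce the $|\Omega_1|^{1/p_1}$ factor. The only cosmetic difference is that you split into the three sign cases for $s$ where the paper writes a single cut-off kernel $\tilde k_s(z)=|z|^{-1-s}\chi_{\R\setminus B_d}(z)$ covering all cases at once; and you (correctly) make explicit the role of the hypothesis $q_2=1$ when $p_2=1$ in the Lorentz--H\"older step, a point the paper leaves implicit.
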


Note, that $\|\cdot\|_{(1,q_1)}$, $\|\cdot\|_{(\infty,q_\infty)}$ are only considered
in the inequalities that follow for $q_1=1$ and $q_\infty=\infty$.

\begin{proof}
 For $\tilde k_s (z) = \frac 1 {|z|^{1+s}} \chi_{\R\setminus B_d} (z)$
 and $\supp f\subset\Omega_{2}$ we have
 for all $x \in \Omega_1$
 \begin{equation*}
  \lap^{\frac{s}{2}} f(x) = c_{s}(\tilde k_s \ast f ) (x)
 \end{equation*}
 and hence
 \begin{equation*}
 |\lap^{\frac{s}{2}} f (x)| \aleq \|f\|_{1,\Omega_2} \|\tilde k_s\|_{\infty} \leq
  d^{-1-s} \|f\|_{1,\Omega_2}.
  \end{equation*}
 Hence,
 \begin{align*}
  \|\lap^{\frac{s}{2}} f\|_{(p_1,q_1), \Omega_1} 
  & \leq |\Omega_1|^{1/p_1} \|\lap^{\frac{s}{2}} f\|_{\infty, \Omega_1}
  \leq d^{-1-s}|\Omega_1|^{1/p_1}\|f\|_{1, \Omega_2} \\ 
   & \leq d^{-1-s} |\Omega_1|^{1/p_1} |\Omega_2|^{1-1/p_2} \|f\|_{(p_2,q_2), \Omega_2}. \qedhere   
 \end{align*}
\end{proof}

A quite immediate consequence of this quasi-locality and Sobolev 
imbeddings is the following lemma. To state it, let
\begin{equation}\label{eq:annulus}
 A^k_{\Lambda,r}:= B_{2^{k}\Lambda r}- B_{2^{k-1} \Lambda r}.
\end{equation}

\begin{lemma}\label{lem:QuasiLocalityOfNorms}
Let $p \in (1,\infty), q,\tilde q \in [1,\infty]$, $s \in (-1,1/p)$ and $- \frac 1 {p^\ast} = s - \frac 1 p$.
\begin{enumerate}
  \item For $f \in\mathcal S(\R)$ with $\supp f \subset B_r$ we have
    \begin{equation*}
  \|\lap^{-\frac{s}{2}}f\|_{(p^\ast,q);\mathbb R - B_{\Lambda r}} 
  \aleq \Lambda^{-1+\fracm p} \|f\|_{(p, \tilde q),B_{r}}
 \end{equation*}
 uniformly for all $\Lambda >2$.
 
 \item If $s \in[0,\frac1p)$, we have for all $f \in\mathcal S(\R)$
 \begin{equation*}
  \|\lapms{s}f\|_{(p^\ast,q);\mathbb R - B_{\Lambda r}} 
  \aleq \Lambda^{-1+\fracm p} \|f\|_{(p, \tilde q);B_r} + \|f\|_{(p, q);\mathbb R - B_{r}}
 \end{equation*} 
 uniformly for all $\Lambda >2$.

 \item For $s\in[0,\frac1p)$ and any $f\in\mathcal S(\R)$, we have
 \begin{equation*}
    \|\lapms{s}f\|_{(p^\ast,q),B_r} 
    \aleq \|f\|_{(p,q),B_{\Lambda r}} + \Lambda^{-\frac 1{p^\ast}}
	\sum_{k=1}^\infty \left(2^{-k}\right)^{\frac 1 {p^\ast}}
	\|f\|_{(p,\tilde q),A^k_{\Lambda, r}}
 \end{equation*}
 uniformly for all $\Lambda >2$.
\end{enumerate}

\end{lemma}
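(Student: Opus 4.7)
All three statements combine two ingredients: the quasi-locality Lemma~\ref{lem:QuasiLocality} for the contribution of $f$ far from the evaluation point, and the global Sobolev inequality $\vrac{\lapms{s} g}_{(p^\ast, q)} \aleq \vrac{g}_{(p, q)}$ for the local contribution. The three parts differ only in how $f$ and the spatial domain are split according to the two natural scales $r$ and $\Lambda r$.

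For part~(i), since $\supp f \subset B_r$, I decompose $\R \setminus B_{\Lambda r} = \bigcup_{k \ge 1} A^k_{\Lambda, r}$ and apply Lemma~\ref{lem:QuasiLocality} on each annulus with $\Omega_1 = A^k_{\Lambda, r}$, $\Omega_2 = B_r$, separation $d \aeq 2^k \Lambda r$, and exponents $(p^\ast, q)$, $(p, \tilde q)$. Using $|A^k_{\Lambda, r}|^{1/p^\ast} \aeq (2^k \Lambda r)^{1/p^\ast}$, $|B_r|^{1 - 1/p} \aeq r^{1 - 1/p}$, and the identity $1/p^\ast = 1/p - s$, the powers of $r$ cancel and each annular bound takes the form $(2^k \Lambda)^{1/p - 1} \vrac{f}_{(p, \tilde q), B_r}$. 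Summing over $k$ (by the triangle inequality on the Banach norm of $L^{(p^\ast, q)}$, valid since $p^\ast > 1$) collapses the resulting geometric series to $\aleq \Lambda^{1/p - 1}$ because $1/p - 1 < 0$, yielding~(i).

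Part~(ii) then follows from the decomposition $f = f\chi_{B_r} + f\chi_{\R \setminus B_r}$: the first summand is handled by part~(i), and the second by the global Sobolev inequality (whose $L^{(p^\ast, q)}$-norm certainly dominates the restriction to $\R \setminus B_{\Lambda r}$). For part~(iii) the roles of near and far exchange: I decompose $f = f\chi_{B_{\Lambda r}} + \sum_{k \ge 1} f\chi_{A^k_{\Lambda, r}}$, apply the Sobolev inequality to the near piece to obtain $\vrac{f}_{(p, q), B_{\Lambda r}}$, and apply Lemma~\ref{lem:QuasiLocality} to each far piece with $\Omega_1 = B_r$, $\Omega_2 = A^k_{\Lambda, r}$, $d \aeq 2^k \Lambda r$. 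The same exponent arithmetic now produces a prefactor $(2^k \Lambda)^{-1/p^\ast}$ in front of $\vrac{f}_{(p, \tilde q), A^k_{\Lambda, r}}$, which is exactly the summand displayed on the right-hand side of~(iii).

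The only delicate point is the summation of Lorentz-norm contributions from the disjoint annuli. Since $p^\ast \in (1, \infty)$, the space $L^{(p^\ast, q)}$ carries an equivalent Banach norm obeying the usual triangle inequality, so the annular contributions may indeed be added term by term; the resulting series is either geometric (as in~(i)--(ii)) or appears verbatim on the right-hand side of~(iii), closing the argument.
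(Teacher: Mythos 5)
Your proof is correct and follows essentially the same route as the paper: dyadic decomposition into the annuli $A^k_{\Lambda,r}$, Lemma~\ref{lem:QuasiLocality} for each far piece (with the same exponent arithmetic using $1/p^\ast = 1/p - s$ to cancel the powers of $r$), Sobolev's inequality for the near piece in (ii)--(iii), and summation of the geometric series. Your explicit remark that $L^{(p^\ast,q)}$ admits an equivalent Banach norm for $p^\ast>1$, so the annular contributions may be summed by the triangle inequality, is a point the paper leaves implicit but is exactly what is needed.
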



\begin{proof}
 For the first inequality we use Lemma~\ref{lem:QuasiLocality}
 and sum up the estimate
 \begin{equation*}
  \|\lap^{-\frac{s}{2}} f\|_{(p^*,  q ),A^k_{\Lambda, r}} \aleq (\Lambda 2^k)^{-1+\fracm p} \|f\|_{(p, \tilde q);B_r}.
 \end{equation*}

 In the second case, we use 
 \begin{align*}
  \|I_sf\|_{(p^\ast, q), \mathbb R - B_{\Lambda r}}
  &\leq \|I_s(\chi_{B_r}f)\|_{(p^\ast, q), \mathbb R - B_{\Lambda r}}
  + \|I_s((1-\chi_{B_r})f)\|_{(p^\ast, q), \mathbb R - B_{\Lambda r}}
 \end{align*}
 and estimate the first term using~(i)
 and the second term
 using Sobolev's inequality to get
 \begin{equation*}
  \|I_sf\|_{(p^\ast, q), \mathbb R - B_{\Lambda r}} 
  \aleq \Lambda^{-1+\frac 1 p} \|f\|_{(p,\tilde q), B_r} 
  + \|f\|_{(p,q); \mathbb R - B_{r}}.
 \end{equation*}

 To deduce the last inequality, we decompose $f= \chi_{B_{\Lambda r}} f + \sum_{k=1}^\infty \chi_{A^{k}_{\Lambda, r}} f$ 
 and estimate using Sobolev's inequality
 \begin{align*}
  \|\lapms{s}(\chi_{B_{\Lambda r}}f)\|_{(p^\ast,q),B_r} \aleq \|f\|_{(p,q),B_{\Lambda r}}
 \end{align*}
 and using Lemma~\ref{lem:QuasiLocality}
 \begin{align*}
  \|\lapms{s}(\chi_{A^{k}_{\Lambda,r}}f)\|_{(p^\ast,q), B_r} 
  \aleq (\Lambda 2^k)^{-\frac 1 {p^\ast}}\|f\|_{(p,\tilde q),A^k_{\Lambda,r}}.
 \end{align*}
 Summing up, this proves the last inequality.
\end{proof}


Finally, we use the quasi-locality to prove

\begin{proposition}\label{pr:localcontrolelliptic}
For $p \in (1,\infty)$, $q \in [1,\infty]$, $s,t \geq 0$ with $0< s+t < 1$
there is a $\theta >0$ such that
we have for any $f \in L^{p,q}(\R)$, $\Lambda > 2$, and $r>0$  
\begin{multline*}
 \vrac{\laps{s} f}_{(p,q), B_r}  \aleq 
  \sup_{\ontop{\varphi \in C_0^\infty(B_{\Lambda^2 r}),}{\|\laps{t}\varphi\|_{(p',q')} \leq 1}}  \int f \laps{s+t} \varphi 
  +  r^{-s}\Lambda^{-\theta}\ \|f\|_{(p,q);B_{\Lambda r}} + r^{-s} 
   \Lambda^{-\theta} \sum_{l=1}^\infty 2^{-\theta l} \vrac{f}_{(p,q),A_{\Lambda, r}^l}.
\end{multline*}
\end{proposition}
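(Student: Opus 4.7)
The plan is to use Lorentz-space duality, truncate $\lapms{t}\psi$ to produce a compactly supported test function, and control the cut-off error via the quasi-locality of the fractional Laplacian.

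First, by Lorentz-space duality,
\[
 \vrac{\laps{s}f}_{(p,q),B_r} \aleq \sup_\psi \int f\,\laps{s}\psi,
\]
where the supremum is over $\psi \in C_0^\infty(B_r)$ with $\vrac{\psi}_{(p',q')} \le 1$ (extended by zero outside $B_r$); here I used the self-adjointness of $\laps{s}$. For such $\psi$, set $\phi := \lapms{t}\psi$; since $s+t \in (0,1)$ one has $\laps{s}\psi = \laps{s+t}\phi$. Fix a smooth cut-off $\eta \in C_0^\infty(B_{\Lambda^2 r})$ with $\eta \equiv 1$ on $B_{\Lambda^2 r/2}$ and $|\eta'|\aleq(\Lambda^2 r)^{-1}$, and let $\varphi := \eta\,\phi \in C_0^\infty(B_{\Lambda^2 r})$.

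Second, I would show $\vrac{\laps{t}\varphi}_{(p',q')} \le C$ uniformly in $\Lambda$, $r$, and $\psi$. Using the commutator $H_t$ from~\eqref{eq:H} together with the identity $\laps{t}\phi = \psi$,
\[
 \laps{t}\varphi \;=\; \eta\,\psi + \phi\,\laps{t}\eta + H_t(\eta,\phi).
\]
Since $\eta \equiv 1$ on $\supp\psi \subset B_r$, the first summand equals $\psi$, whose Lorentz norm is at most $1$. The two remaining summands I would bound using the improved Sobolev imbedding of Lemma~\ref{lem:QuasiLocalityOfNorms}(ii) applied to $\phi$, the natural scaling $|\laps{t}\eta|\aleq(\Lambda^2 r)^{-t}$, pointwise estimates for $H_t$ in the spirit of \cite{SchikorraIntBoundFrac11}, and the H\"older inequality for Lorentz spaces~\eqref{eq:hoelder}. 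Upon dividing by the resulting constant, $\varphi$ becomes admissible in the supremum on the right-hand side of the proposition.

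Third, the cut-off error is
\[
 \int f\,\laps{s}\psi - \int f\,\laps{s+t}\varphi \;=\; \int f\,\laps{s+t}\bigl((1-\eta)\phi\bigr).
\]
Setting $u := (1-\eta)\phi$, which is supported in $\R\setminus B_{\Lambda^2 r/2}$, I would decompose $\R = B_{\Lambda r} \cup \bigcup_{l\ge 1} A^l_{\Lambda,r}$ and pair the restriction of $f$ to each piece with $\laps{s+t}u$ via H\"older in Lorentz spaces. On every evaluation region contained in $B_{\Lambda^2 r/4}$ the quasi-locality Lemma~\ref{lem:QuasiLocality} produces the decay factor $d^{-1-(s+t)}$ with $d \ageq \Lambda^2 r$; coupled with the Sobolev-improved size of $u$ coming from $\vrac{\psi}_{(p',q')}\le 1$ (again via Lemma~\ref{lem:QuasiLocalityOfNorms}(ii)), this delivers the scaling $r^{-s}\Lambda^{-\theta}$ and $r^{-s}\Lambda^{-\theta}2^{-\theta l}$ demanded by the conclusion. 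The annuli that intersect $\supp u$ (those with $2^l\Lambda r \ageq \Lambda^2 r$) are handled by a direct Sobolev/H\"older estimate exploiting the decay of $1-\eta$ and the smallness of the remaining geometric tail.

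The main obstacle is the uniform bound $\vrac{\laps{t}\varphi}_{(p',q')}\aleq 1$: the terms $\phi\,\laps{t}\eta$ and $H_t(\eta,\phi)$ couple the non-local cut-off with the Riesz potential $\lapms{t}\psi$, and controlling them in Lorentz norms requires carefully matching Sobolev exponents, where the constraint $s+t<1$ becomes essential. A secondary difficulty is selecting a single $\theta>0$ which works simultaneously for the near-annulus decay coming from quasi-locality and for the geometric tail appearing in the remainder.
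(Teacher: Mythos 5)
Your proposal follows essentially the same route as the paper: dualize in Lorentz spaces, insert $\lapms{t}$ to manufacture a $K$-admissible test function supported in $B_{\Lambda^2 r}$, and control the remainder via quasi-locality. The paper organizes this slightly differently and more cleanly: it first splits the domain of $f$ into $B_{\Lambda r}$ and the annuli $A^l_{\Lambda,r}$ --- the annular pieces are then disposed of immediately by applying quasi-locality to $\laps{s}g$ with $g$ supported in $B_r$, which yields the factor $(2^l\Lambda)^{-\theta}r^{-s}\|g\|_{(p',q')}$ at once --- and only on the $B_{\Lambda r}$ piece does it introduce a \emph{full} dyadic partition of unity for $\lapms{t}g$, so that every remaining piece has compact support and the $2^{-\theta l}$ tail appears automatically. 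Your single cut-off $\eta$ forces an extra dyadic decomposition of the unbounded $\supp\bigl((1-\eta)\phi\bigr)$ before quasi-locality becomes applicable; this works, but with more bookkeeping. Two further remarks. First, the ``decay of $1-\eta$'' you invoke on the far annuli does not exist ($1-\eta\equiv 1$ there); what actually decays is $\phi=\lapms{t}\psi$ away from $\supp\psi$, and the paper sidesteps this issue entirely by estimating $\laps{s}g$ directly on the annuli. Second, the uniform bound $\|\laps{t}\varphi\|_{(p',q')}\aleq 1$, which you rightly flag as the main obstacle, is also needed (tacitly) in the paper's step $\bigl|\int f\laps{s+t}(\eta_0\lapms{t}g)\bigr|\aleq K\|g\|_{(p',q')}$; the commutator route you sketch via $H_t$, combined with the scale invariance of $\|\laps{t}\eta\|_{(1/t,\infty)}$ and the scaled Poincar\'e inequality the paper invokes further down, is the right way to close it.
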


\begin{proof}
Assume that 
\[
 \sup_{\varphi \in C_0^\infty(B_{\Lambda^{2} r}), \|\laps{t}\varphi\|_{(p',q')} \leq 1}
  \int f \laps{s+t} \varphi  \leq K.
\]
For $g \in C_0^{\infty}(B_r)$ we consider
\begin{align*}
 \int_{\mathbb R} (\laps{s} f) g dx= \int_{\mathbb R} f\laps{s} g dx
 = \int_{B_{\Lambda r}} f\laps{s} g dx+ \sum_{l=1}^\infty \int_{A^l_{\Lambda, r}} f\laps{s} g dx.
\end{align*}
For the first term we use a smooth partition of unity $\br{\eta_k}_{{k\in\N\cup\set0}}$
with $\supp \eta_0 \subset B_{\Lambda^2 r}$ and 
$\supp \eta_k \subset B_{(\Lambda^{2} 2^{k}+1)r} -B_{(\Lambda^{2} 2^{k-1}-1)r}$
to get
\begin{align*}
   \left|\int_{B_{\Lambda r}} f\laps{s} g dx\right| 
   &\leq \left|\int_{B_{\Lambda r}} f\laps{t+s} \br{\eta_0
  \lapms{t} g} dx\right|+ \sum_{l=1}^\infty
  \left|\int_{B_{\Lambda r}} f\laps{t+s} \br{\eta_l
  \lapms{t} g} dx\right| \\
  &\aleq K \| g\|_{(p',q')} + \|f\|_{(p,q); B_{\Lambda r}} \sum_{l=1}^\infty 
   \|\laps{t+s} \br{\eta_l
  \lapms{t} g}\|_{(p',q');B_{\Lambda r}}.
\end{align*}
Since by Lemma~\ref{lem:QuasiLocality} we have
\begin{align*}
 \|\laps{t+s} \eta_l \lapms{t} g\|_{(p',q');B_{\Lambda r}} 
 &\aleq (\Lambda^2 2^l r)^{-1-t-s} (\Lambda r)^{1/p'} (\Lambda^2 2^l r)^{1-1/p'}
 \|\eta_l\lapms{t} g\|_{(p',q')}  \\
 &\leq  (2^l )^{-t-s-1/p'} \Lambda^{-(2s+2t+1/p')} r^{-s-t}
 \|\lapms{t}g\|_{(p',q')}  
\end{align*}
we can estimate this further by
\[  K \|g\|_{(p',q')} + \Lambda^{-\theta_{1}}r^{-s-t}\|f\|_{(p,q); B_{\Lambda r}} \|\lapms{t} g\|_{(p',q')} \]
for $\theta_1 = 2t+2s+1/p'$.

Since $\supp g \subset B_r$ we get a scaled Poincar\'e inequality by applying first the Sobolev and then the H\"older inequality
\begin{equation*}
 \|\lapms{t} g\|_{(p',q')} \aleq \norm{g}_{(\frac{p'}{p't+1},q'),B_r}
 \aleq  r^{t} \|g\|_{(p',q')}  
\end{equation*}
and using the quasi-locality (Lemma~\ref{lem:QuasiLocality}) once more
\begin{equation*}
 \|\laps{s} g\|_{(p',q'),A^k_{\Lambda r}}
  \aleq (2^{k}\Lambda r)^{-1-s}r^{1-1/p'}(2^{k}\Lambda r)^{1/p'}\|g\|_{(p',q')}
  = \Lambda^{-\theta_2} 2^{-k \theta_2} r^{-s} \|g\|_{(p',q')}
\end{equation*}
for $\theta_2 = 1+s-1/p'$.

Hence, 
\begin{equation*}
 \int_{\mathbb R} \br{\laps{s} f}\ g dx \leq \left(K + \Lambda ^{-\theta} r^{-s} 
 \|f\|_{(p,q);B_{\Lambda r}} + r^{-s} \Lambda^{-\theta}
  \sum_{l=1}^\infty 2^{-\theta l} 
  \|f\|_{(p,q);A^l_{\Lambda, r}} \right) \|g\|_{(p',q')},
\end{equation*}
for $\theta = \min\{\theta_1, \theta_2\}$ which by duality proves the proposition.
\end{proof}

\subsection{Proofs of Lemmata~\ref{la:tangentialpart} and~\ref{lem:CriticalTerm}}

The following lemma is the starting point for the 
estimates of $H_s$ and essentially follows from the mean value theorem or a first-order Taylor expansion.

\begin{lemma} \label{lem:MultiplierEstimate}
Let $\delta \in [0,1]$, $\alpha \in (0,1)$. Then for almost all $x,y, \xi \in \mathbb R^n$
we have
\begin{equation*}
 \left||x-\xi|^{-1+\alpha} - |y-\xi|^{-1+\alpha} \right| \aleq |x-y|^\delta 
 \left( |y-\xi|^{-1+\alpha-\delta} + |x-\xi|^{-1+\alpha-\delta} \chi_{|x-y|>2|x-\xi|}\right).
\end{equation*}
\end{lemma}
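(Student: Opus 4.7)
The inequality is a quantitative Hölder-type estimate for the kernel $t\mapsto t^{-1+\alpha}$; the natural plan is a case split based on the relative sizes of $|x-\xi|$, $|y-\xi|$ and $|x-y|$, using the mean value theorem in the ``regular'' regime and the triangle inequality in the ``singular'' regime. Throughout, note that $-1+\alpha<0$ and $-1+\alpha-\delta<0$, so the function $t\mapsto t^{-1+\alpha}$ (resp.\@ $t^{-1+\alpha-\delta}$) is decreasing and therefore maximized on an interval at the left endpoint.

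\textbf{Case 1: $|x-y|\le 2|x-\xi|$.} The indicator vanishes, so one needs only the $|y-\xi|^{-1+\alpha-\delta}$ term. By the triangle inequality $|y-\xi|\le|x-y|+|x-\xi|\le 3|x-\xi|$. I would further split this case according to whether $|y-\xi|\ge\tfrac12|x-\xi|$ or $|y-\xi|<\tfrac12|x-\xi|$. In the first subcase, $|x-\xi|$ and $|y-\xi|$ are comparable, and the mean value theorem applied to $t\mapsto t^{-1+\alpha}$ gives
\[
 \left||x-\xi|^{-1+\alpha}-|y-\xi|^{-1+\alpha}\right|
 \le C\,\bigl||x-\xi|-|y-\xi|\bigr|\,\min(|x-\xi|,|y-\xi|)^{-2+\alpha}
 \aleq |x-y|\,|x-\xi|^{-2+\alpha};
\]
splitting $|x-y|=|x-y|^\delta|x-y|^{1-\delta}$ and using $|x-y|^{1-\delta}\le(2|x-\xi|)^{1-\delta}$ (valid since $1-\delta\ge0$) produces $|x-y|^\delta|x-\xi|^{-1+\alpha-\delta}$, which is comparable to $|x-y|^\delta|y-\xi|^{-1+\alpha-\delta}$. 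In the second subcase $|y-\xi|<\tfrac12|x-\xi|$, one uses the reverse triangle inequality $|x-y|\ge|x-\xi|-|y-\xi|\ge\tfrac12|x-\xi|\ge|y-\xi|$ and the crude bound
\[
 \left||x-\xi|^{-1+\alpha}-|y-\xi|^{-1+\alpha}\right|\le 2|y-\xi|^{-1+\alpha}
 = 2|y-\xi|^{\delta}\,|y-\xi|^{-1+\alpha-\delta}\le 2|x-y|^\delta|y-\xi|^{-1+\alpha-\delta}.
\]

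\textbf{Case 2: $|x-y|>2|x-\xi|$.} Now the indicator is active, so both terms of the right hand side are available. Here $|y-\xi|\ge|x-y|-|x-\xi|>\tfrac12|x-y|$ and $|y-\xi|\le|x-y|+|x-\xi|\le\tfrac32|x-y|$, so $|y-\xi|\aeq|x-y|$, while $|x-\xi|<\tfrac12|x-y|$. The triangle inequality gives
\[
 \left||x-\xi|^{-1+\alpha}-|y-\xi|^{-1+\alpha}\right|
 \le |x-\xi|^{-1+\alpha}+|y-\xi|^{-1+\alpha}.
\]
The first summand is $|x-\xi|^\delta|x-\xi|^{-1+\alpha-\delta}\le|x-y|^\delta|x-\xi|^{-1+\alpha-\delta}$, which is absorbed by the indicator term on the right. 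The second summand equals $|y-\xi|^\delta|y-\xi|^{-1+\alpha-\delta}\aleq|x-y|^\delta|y-\xi|^{-1+\alpha-\delta}$.

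The main (and really only) subtlety is making sure the estimate is sharp enough in the regime where $|y-\xi|$ is much smaller than $|x-\xi|$ while still $|x-y|\le 2|x-\xi|$: naively invoking the mean value theorem would cost a factor $|y-\xi|^{-2+\alpha}$ which is \emph{larger} than the claimed $|y-\xi|^{-1+\alpha-\delta}$ when $\delta<1$. The resolution, reflected in subcase two of Case~1, is to abandon the mean value theorem there and use the direct triangle-inequality bound $2|y-\xi|^{-1+\alpha}$, with the factor $|x-y|^\delta$ coming for free from the geometric lower bound $|x-y|\ge|y-\xi|$. Combining the two cases yields the claim.
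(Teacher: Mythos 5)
Your proof is correct and follows essentially the same strategy as the paper's: isolate the regime $|x-y|>2|x-\xi|$ where the indicator is active and the crude triangle-inequality bound suffices, and handle the remaining region by distinguishing a ``comparable'' regime (mean value theorem plus an interpolation split of $|x-y|$) from a ``singular'' regime where the reverse triangle inequality yields the factor $|x-y|^\delta$ for free. The only cosmetic difference is the threshold used to split the non-indicator region (the paper tests $|x-y|\gtrless 2|y-\xi|$, you test $|y-\xi|\gtrless\tfrac12|x-\xi|$), but both decompositions are exhaustive and lead to the same estimates.
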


\begin{proof}
 If $|x-y|>2|x-\xi|$ we get
 \begin{equation*}
  |y-\xi| \geq |y-x|-|x-\xi| 
  > |x-\xi|
 \end{equation*}
 and hence
 \begin{equation*}
  \left||x-\xi|^{-1+\alpha} - |y-\xi|^{-1+\alpha} \right| \aleq |x-\xi|^{-1+\alpha}
  \aleq |x-y|^{\delta} |x-\xi|^{-1+\alpha-\delta}.
 \end{equation*}

 If $|x-y|\leq 2 |x-\xi|$ we first observe that the above argument leads to
 \begin{equation*}
   \left||x-\xi|^{-1+\alpha} - |y-\xi|^{-1+\alpha} \right|
  \aleq |x-y|^{\delta} |y-\xi|^{-1+\alpha-\delta}
 \end{equation*}
 if $|x-y|>2|y-\xi|$.

 To deal with the case that both $|x-y|\leq 2 |x-\xi|$ and $|x-y|\leq 2 |y-\xi|$
 we observe that then
 \begin{equation*}
  |y-\xi| \leq |x-\xi| + |x-y| \leq 3 |x-\xi|.
 \end{equation*}
 Hence, we get using the mean value theorem
 \begin{align*}
   \left||x-\xi|^{-1+\alpha} - |y-\xi|^{-1+\alpha} \right|
   &\aleq |x-y| \max\left\{|x-\xi|^{-2+\alpha}, |y-\xi|^{-2+\alpha}\right\}  \\
   &\aleq |x-y| |y-\xi|^{-2+\alpha} \aleq |x-y|^\delta |y-\xi|^{-1+\alpha-\delta}.\qedhere
 \end{align*}
\end{proof}

We use the lemma above to derive the following pointwise estimate for $H_{1/2+s}$.

\begin{lemma}[\cite{SchikorraIntBoundFrac11}]\label{lem:EstimateForH12s}
For $s \in [0,\frac12)$ 
and functions $a,b\in\mathcal S(\R)$
the following holds for any $\varepsilon,\eps' \in [0,\frac16-\frac s3)$:
\begin{align*}
 |H_{1/2+s}(a,b)| &\aleq \lapms{1/6-s/3} (\lapms{1/6-s/3-\varepsilon}|\laps{1/2-\varepsilon} a| \; \lapms{1/6-s/3-\varepsilon'}|\laps{1/2-\varepsilon'} b|) 
   \\ &\quad+ \lapms{1/3-2s/3-\varepsilon}|\laps{1/2-\varepsilon} a| \; \lapms{1/6-s/3-\varepsilon'}|\laps{1/2-\varepsilon'} b|
   \\ & \quad+\lapms{1/6-s/3-\varepsilon}|\laps{1/2-\varepsilon} a|\; \lapms{1/3-2s/3-\varepsilon'}|\laps{1/2-\varepsilon'} b|
   \\ & \quad+\lapms{1/4-s/2-\varepsilon}|\laps{1/2-\varepsilon} a| \; \lapms{1/4-s/2-\varepsilon'}|\laps{1/2-\varepsilon'} b|. 
\end{align*}
\end{lemma}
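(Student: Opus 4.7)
The standard starting point is the symmetrized representation
\[ H_{1/2+s}(a,b)(x) = -c \int_{\R}\frac{(a(x)-a(y))(b(x)-b(y))}{\abs{x-y}^{3/2+s}}\,dy, \]
obtained by writing $a(x)b(x)-a(y)b(y) = a(x)(b(x)-b(y)) + b(x)(a(x)-a(y)) - (a(x)-a(y))(b(x)-b(y))$ and plugging this into the singular-integral representation \eqref{eq:fraclap+} of $\laps{1/2+s}$. Taking absolute values reduces the task to bounding a positive integral.

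Using the Riesz inversion $a = \lapms{1/2-\varepsilon}\laps{1/2-\varepsilon}a$ (see~\eqref{eq:fraclap-}) and applying Lemma~\ref{lem:MultiplierEstimate} with $\alpha=1/2-\varepsilon$ and some exponent $\delta_a\in[0,1]$ to the resulting kernel difference, I obtain, writing $\tilde a := \laps{1/2-\varepsilon}a$,
\[ \abs{a(y)-a(x)} \aleq \abs{x-y}^{\delta_a}\br{\lapms{1/2-\varepsilon-\delta_a}\abs{\tilde a}(y) + L_a\brac{x,\tfrac{\abs{x-y}}{2}}}, \]
where the ``local'' piece is $L_a(x,R) := \int_{\abs{x-\xi}<R}\abs{\tilde a(\xi)}\abs{x-\xi}^{-1/2-\varepsilon-\delta_a}\,d\xi$, and analogously for $b$ (with $\varepsilon'$, $\delta_b$, $\tilde b := \laps{1/2-\varepsilon'}b$). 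Expanding the product $(a(y)-a(x))(b(y)-b(x))$ yields four contributions according to whether each factor is taken nonlocal or local: NN, NL, LN, LL. I would treat each separately with $\delta$-choices tailored to turn the residual $y$-integration into a recognizable Riesz convolution.

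For the NN contribution I take $\delta_a=\delta_b=1/3+s/3$, so that the remaining weight $\abs{x-y}^{2\delta-3/2-s}=\abs{x-y}^{-5/6-s/3}$ is exactly the kernel of $\lapms{1/6-s/3}$, producing the first term. For the NL (and symmetrically LN) case I again keep $\delta=1/3+s/3$ and bound $L_b(x,\tfrac{\abs{x-y}}{2})\le L_b(x,\infty)\aleq \lapms{1/6-s/3-\varepsilon'}\abs{\tilde b}(x)$ in order to pull the $y$-independent local factor outside; the remaining $y$-integration is then converted via the Riesz-semigroup identity $\lapms{1/6-s/3}\lapms{1/6-s/3-\varepsilon}=\lapms{1/3-2s/3-\varepsilon}$ into the second (respectively third) term.

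The main obstacle is the LL contribution: with both factors localized near~$x$, the choice $\delta=1/3+s/3$ makes the $y$-integral diverge at infinity, so one must work with a \emph{smaller} exponent $\delta_a=\delta_b=\delta<1/4+s/2$. Applying Fubini in $(y,\xi,\xi')$, the inner integral becomes
\[ \int_{\abs{x-y}>2\max(\abs{x-\xi},\abs{x-\xi'})}\abs{x-y}^{2\delta-3/2-s}\,dy \aeq \max\brac{\abs{x-\xi},\abs{x-\xi'}}^{2\delta-1/2-s}. \]
Since $\mu := 2\delta-1/2-s<0$, the elementary inequality $\max(a,b)^{\mu}\le a^{\mu/2}b^{\mu/2}$ distributes the bound symmetrically, and combining with the remaining kernels $\abs{x-\xi}^{-1/2-\varepsilon-\delta}$ and $\abs{x-\xi'}^{-1/2-\varepsilon'-\delta}$ produces the kernels $\abs{x-\xi}^{-3/4-s/2-\varepsilon}$ and $\abs{x-\xi'}^{-3/4-s/2-\varepsilon'}$, i.e.\ $\lapms{1/4-s/2-\varepsilon}$ and $\lapms{1/4-s/2-\varepsilon'}$; crucially, the final exponents are independent of the specific choice of $\delta$. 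The hypothesis $\varepsilon,\varepsilon'\in[0,\tfrac16-\tfrac s3)$ enters precisely to ensure that each Riesz exponent appearing above ($1/6-s/3-\varepsilon$, $1/3-2s/3-\varepsilon$, $1/4-s/2-\varepsilon$, and the corresponding primed versions) lies strictly in $(0,1)$.
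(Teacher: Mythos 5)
Your proof follows the same path as the paper's: symmetrize $H_{1/2+s}(a,b)(x)$ as a constant times $\int_\R (a(y)-a(x))(b(y)-b(x))\abs{x-y}^{-3/2-s}\,\d y$, invert $a$ and $b$ through $\lapms{1/2-\eps}$ resp.\ $\lapms{1/2-\eps'}$, apply Lemma~\ref{lem:MultiplierEstimate} with a free exponent $\delta$, and expand the product into NN, NL, LN, LL contributions. Your NN, NL, LN treatment with $\delta=\tfrac13+\tfrac s3$ coincides with the paper's (drop the indicators on the mixed pieces, pull the $y$-independent local factor out as $\lapms{1/6-s/3-\eps'}\abs{\laps{1/2-\eps'}b}(x)$, and use the Riesz semigroup identity). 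At the LL term you are more careful than the printed argument: the paper fixes $\delta=\tfrac13+\tfrac s3$ for all four pieces and then evaluates the inner $y$-integral $\int_{\abs{x-y}>2\max(\abs{x-z},\abs{x-w})}\abs{x-y}^{-3/2-s+2\delta}\,\d y$ as though it converged, but with that $\delta$ one has $3/2+s-2\delta=5/6+\tfrac s3<1$ for $s\in[0,\tfrac12)$, so the integral diverges at infinity. Your fix --- choose a separate $\delta<\tfrac14+\tfrac s2$ for the LL factors, evaluate the now-convergent $y$-integral to $\max(\abs{x-z},\abs{x-w})^{2\delta-1/2-s}$, split this using $\max(u,v)^{\mu}\le(uv)^{\mu/2}$ for $\mu<0$, and observe that the combined exponent $-3/4-\tfrac s2-\eps$ is independent of $\delta$ --- is exactly what is needed and reproduces the fourth term $\lapms{1/4-s/2-\eps}\abs{\laps{1/2-\eps}a}\,\lapms{1/4-s/2-\eps'}\abs{\laps{1/2-\eps'}b}$. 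So the proposal is correct and, at the LL step, tightens what is written here.
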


\begin{proof}
In order to shorten notation, we restrict to $\eps'=\eps$;
the general case is parallel.
 We use the identities $a=\lapms{\fracm{2}-\varepsilon}  \laps{\fracm{2}-\varepsilon}a$ and $b = \lapms{\fracm{2}-\varepsilon}  \laps{\fracm{2}-\varepsilon} b$.
 Then,
\begin{align*}
 &|H_{1/2+s}(a,b)(x)| \\&\aleq \int \int \int \frac {\left(|y-z|^{-1+1/2-\eps}-|x-z|^{-1+1/2-\eps} \right) 
  \left( |y-w|^{-1+1/2-\eps} - |x-w|^{-1+1/2-\eps} \right)}{|y-x|^{1+ (1/2+s)}}\cdot{}\\
  &\qquad\qquad\qquad{}\cdot\abs{\laps{\frac{1-2\eps}2} a(z)} \,\abs{\laps{\frac{1-2\eps}2} b(w)} \d w \d z \d y .
\end{align*}
Applying Lemma~\ref{lem:MultiplierEstimate} we get
\begin{align*}
 & \frac {\left(|y-z|^{-1+1/2-\eps}-|x-z|^{-1+1/2-\eps} \right) 
  \left( |y-w|^{-1+1/2-\eps} - |x-w|^{-1+1/2-\eps} \right)}{|y-x|^{1+ (1/2+s)}} \\
 &\aleq \frac {\scriptstyle\left(|y-w|^{-1+1/2-\eps-\delta} + |x-w|^{-1+1/2-\eps-\delta} \chi_{|x-y|>2|x-w|} \right) 
		\left(|y-z|^{-1+1/2-\eps-\delta} + |x-z|^{-1+1/2-\eps-\delta} \chi_{|x-y|>2|x-z|} \right)}
	     {|x-y|^{1+1/2+s-2\delta}} \\
 &\aleq \frac {(|y-w||y-z|)^{-1+1/2-\eps-\delta} + (|x-w||y-z|)^{-1+1/2-\eps-\delta}  
  + (|y-w||x-z|)^{-1+1/2-\eps-\delta}}{|x-y|^{1+1/2+s-2\delta}} \\
 & \quad{} + \frac {(|x-w||x-z|)^{-1+1/2-\eps-\delta}}{|x-y|^{1+1/2+s-2\delta}}
    \chi_{|x-y|>2|x-w|} \ \chi_{|x-y|>2|x-z|}.
\end{align*}
For $\delta =1/3 + s/3$ we hence get
\begin{align*}
 |H_{1/2+s}(a,b)|& \aleq \lapms{1/6-s/3} (\lapms{1/6-s/3-\eps}|\laps{\frac{1-2\eps}2} a| \ \lapms{1/6-s/3-\eps}|\laps{\frac{1-2\eps}2} b|) 
  \\ & \quad+\lapms{1/6-s/3-\eps}|\laps{\frac{1-2\eps}2} a|  \ \lapms{1/3-2s/3-\eps}|\laps{\frac{1-2\eps}2} b|
  \\ &\quad+ \lapms{1/3-2s/3-\eps}|\laps{\frac{1-2\eps}2} a| \ \lapms{1/6-s/3-\eps}|\laps{\frac{1-2\eps}2} b|
  + A
\end{align*}
where
\begin{align*}
 A &\aleq \int \int \brac{\int \frac {(|x-w||x-z|)^{-1+1/2-\eps-\delta}}{|x-y|^{1+1/2+s-2\delta}}
    \chi_{|x-y|>2|x-w|} \  \chi_{|x-y|>2|x-z|}\ \d y} \	 |\laps{\frac{1-2\eps}2}  a(z)|\ |\laps{\frac{1-2\eps}2}  b(w)| \d w \d z \\
   &\aleq \int \int   |x-w|^{-1+1/4-\eps-s/2 } |x-z|^{-1+1/4-\eps-s/2} |\laps{\frac{1-2\eps}2}  a(z)|\ |\laps{\frac{1-2\eps}2}  b(w)| \d w \d z \\
   &\aleq \lapms{1/4-s/2-\eps}|\laps{\frac{1-2\eps}2} a|\ \lapms{1/4-s/2-\eps}| \laps{\frac{1-2\eps}2} b|. \qedhere
\end{align*}
\end{proof}

To estimate this further, we will use the following fact about {\em lower order products}
which we get using the quasi-locality.

\begin{lemma} [Lower order products] \label{lem:LowerOrderProducts.v2}
  Let for $s > 0$, $0 \leq s_{1}, s_{2}, s_{3} \leq 1/2$, $s_{1} + s_{2}+ s_{3} = s$, and at least two of these $s_i$, $i=1,2,3$, non-zero. Let $p \in (1,\infty)$, $p_2 \in (1,\fracm{s_2}),p_3 \in (1,\fracm{s_3})$ and such that
  \[
   \fracm{p} = \fracm{p_2} + \fracm{p_3} - s.
  \]
Assume moreover that
\begin{equation*}
  |G(u,v)| \aleq \lapms{s_{1}} \left( \lapms{s_{2}}  |u|\ 
  \lapms{s_{3}} |v|\right),
 \end{equation*}
 then there is some $\theta>0$ such that
 we have for any $\Lambda>4$, and $\fracm{q} = \fracm{q_1}+ \fracm{q_2}$,
 \begin{align}\label{eq:Guvest}
    &\|G(u,v)\|_{(p, q), B_r} \\ \nonumber
  &\aleq \|u\|_{(p_2,q_1),B_{\Lambda r}} \|v\|_{(p_3,q_2),B_{\Lambda r}} + \|u\|_{(p_2,q_1)}
    \Lambda^{-\theta}\sum_{k=2}^\infty 2^{-\theta k}\|v\|_{(p_3,q_2),B_{\Lambda 2^k r}}.
 \end{align}
 If $\supp \laps{t}v  \subset \overline{B_r}$ for some $t \in [0,\fracm{2}]$, 
 we furthermore get for any $k \geq 2$, $\Lambda > 16$,
 \begin{equation}\label{eq:Ginversedssupport}
  \|G(u,v)\|_{(p,q),A^k_{\Lambda,r}} 
  \aleq \Lambda^{-\theta} 2^{-\theta k} \vrac{u}_{(p_2,\infty),\R}\ \|v\|_{(p_3, 1),\R}.
 \end{equation}
\end{lemma}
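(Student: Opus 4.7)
My plan is to prove both estimates by combining a dyadic decomposition in the spatial scale with the Sobolev embedding for Riesz potentials (Lemma~\ref{lem:QuasiLocalityOfNorms}), the Lorentz-space H\"older inequality~\eqref{eq:hoelder}, and the quasi-locality of $\lapms{s_i}$ (Lemma~\ref{lem:QuasiLocality}). The exponent bookkeeping is driven by the hypothesis $\tfrac1p = \tfrac1{p_2}+\tfrac1{p_3}-s$: setting $\tfrac1{p_i^\ast}:=\tfrac1{p_i}-s_i$ for $i=2,3$, Sobolev sends $\lapms{s_i}|\cdot|$ into $L^{(p_i^\ast,q_i)}$, H\"older places the product in $L^{(\tilde p,q)}$ with $\tfrac1{\tilde p}=\tfrac1{p_2^\ast}+\tfrac1{p_3^\ast}=\tfrac1p+s_1$, and a final Sobolev step for $\lapms{s_1}$ lands in $L^{(p,q)}$. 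The hypothesis that at least two of the $s_i$ are non-zero guarantees that there is always some genuine Sobolev gain to absorb the losses coming from quasi-locality.

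For~\eqref{eq:Guvest} I would decompose
\[ v = v\,\chi_{B_{\Lambda r}} + \sum_{k\ge 2} v\,\chi_{A^k_{\Lambda,r}}. \]
The local contribution reproduces the first summand on the right-hand side of~\eqref{eq:Guvest} by direct application of Sobolev and H\"older as sketched above, majorising $\lapms{s_2}|u|$ on $B_r$ via Lemma~\ref{lem:QuasiLocalityOfNorms}(iii) which splits the Riesz potential into a local Sobolev piece on $B_{\Lambda r}$ plus a geometric tail controlled by $\vrac{u}_{(p_2,q_1)}$. For each $k\ge 2$, the annular piece $v\,\chi_{A^k_{\Lambda,r}}$ sits at distance $\approx \Lambda 2^{k-1}r$ from $B_r$, and Lemma~\ref{lem:QuasiLocality} yields a smallness factor $(\Lambda 2^k)^{-\theta}$ for $\lapms{s_3}(|v|\,\chi_{A^k_{\Lambda,r}})$ on $B_{\Lambda r}$; composing with H\"older and Lemma~\ref{lem:QuasiLocalityOfNorms}(iii) for the outer $\lapms{s_1}$ produces the weighted series with weights $2^{-\theta k}$.

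For~\eqref{eq:Ginversedssupport} the key observation is that positivity of the Riesz kernel implies $|v|=|\lapms{t}\laps{t}v|\le\lapms{t}|\laps{t}v|$, hence $\lapms{s_3}|v|\le\lapms{s_3+t}|\laps{t}v|$ where $|\laps{t}v|$ is supported in $\overline{B_r}$. On the annulus $A^k_{\Lambda,r}$, Lemma~\ref{lem:QuasiLocality} therefore already gives $\|\lapms{s_3}|v|\|_{\infty,A^k_{\Lambda,r}} \aleq (\Lambda 2^k r)^{-1+s_3+t}\,\|\laps{t}v\|_{1,B_r}$, which after a scaled Sobolev/H\"older estimate of $\|\laps{t}v\|_{1,B_r}$ is bounded by $(\Lambda 2^k)^{-\theta}\|v\|_{(p_3,1),\R}$. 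H\"older pairing with $\lapms{s_2}|u|\in L^{(p_2^\ast,\infty)}$ and a second application of quasi-locality to the outer $\lapms{s_1}$ (splitting its integrand into pieces near and far from $A^k_{\Lambda,r}$ and using Lemma~\ref{lem:QuasiLocalityOfNorms}) then yields the product structure $\Lambda^{-\theta}2^{-\theta k}\vrac{u}_{(p_2,\infty),\R}\|v\|_{(p_3,1),\R}$.

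The main obstacle I anticipate is the careful exponent bookkeeping while iterating quasi-locality with Sobolev embeddings, in particular for the degenerate case in which one of the $s_i$ vanishes (so the corresponding Riesz potential is just the identity and one cannot use Sobolev at that stage). The assumption that at most one of the $s_i$ equals zero is precisely what leaves enough room to absorb all quasi-local losses into a common positive power $\theta>0$; verifying this uniformly while keeping track of the Lorentz second indices (and of the mild technical hypothesis $p_i\in(1,1/s_i)$ needed for the Sobolev embedding to apply) is where the technical care will concentrate.
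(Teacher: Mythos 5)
Your overall framework (dyadic decomposition around the ball, Sobolev embedding for Riesz potentials, Lorentz--H\"older, quasi-locality) is the same toolkit the paper uses, but the two arguments apply it in different orders. For~\eqref{eq:Guvest} the paper does not decompose $v$: it applies Lemma~\ref{lem:QuasiLocalityOfNorms}~(iii) directly to the outer potential $\lapms{s_1}$ (with $\Lambda$ replaced by $\sqrt\Lambda$), then H\"olders the resulting localized product, and only then applies (iii) again to each of $\lapms{s_2}\abs u$ and $\lapms{s_3}\abs v$. Your variant --- splitting $v$ into $v\chi_{B_{\Lambda r}}$ plus annular pieces, and using quasi-locality of $\lapms{s_3}$ on each piece --- should also close, but note that $\lapms{s_3}\abs{v_k}$ is not small on the annulus $A^k_{\Lambda,r}$ itself, only near $B_r$, so when the outer $\lapms{s_1}$ is localized via (iii) you still have to carry the non-small contribution from that annulus and make sure its weight $2^{-\theta l}$ from (iii) beats the factor you lost. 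This is exactly the exponent bookkeeping you flag, and it is more delicate in your ordering than in the paper's, where each application of (iii) produces an already-summable tail with no competing large factor. I would not call this a gap, only a point to verify carefully.

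For~\eqref{eq:Ginversedssupport}, however, your argument has a genuine gap. Your plan passes $\lapms{t}$ through $\lapms{s_3}$ by positivity of the Riesz kernel, arriving at $\lapms{s_3+t}\abs{\laps{t}v}$, and then applies quasi-locality to the compactly supported $\laps{t}v$; this leaves $\snorm{\laps{t}v}_{1,B_r}$ in the estimate, which you assert can be bounded by $\snorm{v}_{(p_3,1),\R}$ ``after a scaled Sobolev/H\"older estimate''. That step cannot work: $\laps{t}$ is a positive power of the Laplacian, so Sobolev embedding runs in the opposite direction --- it bounds norms of $v$ by norms of $\laps{t}v$, never the reverse --- and there is no way to control $\snorm{\laps{t}v}_{1}$ by any Lorentz norm of $v$ alone. (Also, if $s_3=t=\tfrac12$ the composition $\lapms{s_3}\lapms{t}=\lapms{s_3+t}$ is the borderline $\lapms{1}$, whose kernel on $\R$ is logarithmic, not of the form $\abs x^{-1+s}$, so~\eqref{eq:fraclap-} no longer applies.) The paper's proof avoids both of these difficulties: instead of composing Riesz potentials, it applies Lemma~\ref{lem:QuasiLocalityOfNorms}~(ii) to the outer $\lapms{s_1}$ to peel off a local piece with a small factor, then H\"olders the remaining global tail and uses the support hypothesis by writing $v=\laps{t}\bigl(\lapms{t}v\bigr)$ and invoking Lemma~\ref{lem:QuasiLocalityOfNorms}~(i) for $\laps{t}$ with the compactly supported function $\lapms{t}v$ in the role of $f$. (This also indicates that the printed hypothesis $\supp\laps{t}v\subset\overline{B_r}$ is almost certainly a misprint for $\supp\lapms{t}v\subset\overline{B_r}$, which is what is required both by the proof's use of~(i) and by the intended application in Lemma~\ref{lem:localizedHEst} where $v=\laps{1/2-\eps}b$ with $\supp b\subset B_r$.) You would need to reorganize your argument along these lines; the positivity trick alone does not get you to the stated conclusion.
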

\begin{proof}
Let
\[
  \fracm{p_2^\ast} :=  \fracm{p_2} -s_2, 
\]
\[
  \fracm{p_3^\ast} :=  \fracm{p_3} -s_3, 
 \]
\[
  \fracm{p_1} :=  \fracm{p_2^\ast} + \fracm{p_3^\ast} = \fracm{p_2} + \fracm{p_3} - s_2 - s_3,
\]
and
\[
 \fracm{p_1^\ast} := \fracm{p} \equiv \fracm{p_2} + \fracm{p_3} - s
 =\frac{1}{p_{1}}-s_{1}.
\]

Then by Lemma~\ref{lem:QuasiLocalityOfNorms}~(iii), for $\Lambda \hat = \sqrt{\Lambda}$, $p^\ast \hat = p_1^\ast=p$, $p \hat= p_1$, $s\hat=s_{1}$
 \begin{align*}
  &\|G(u,v)\|_{(p_1^\ast,q),B_r} \aleq
  \vrac{\lapms{s_2}\abs{u} \ \lapms{s_3}\abs{v}}_{(p_1,q), B_{\Lambda^{1/2} r}} + \Lambda^{-1/2p}
  \sum_{k=1}^\infty 2 ^{-k/p} 
  \vrac{\lapms{s_2}\abs{u} \ \lapms{s_3}\abs{v}}_{(p_1,q), B_{\Lambda^{1/2}2^k r}} \\
  &\aleq
 \vrac{\lapms{s_2}\abs{u}}_{(p_2^\ast,q_1), B_{\Lambda^{1/2} r}} \ \vrac{\lapms{s_3}\abs{v}}_{(p_3^\ast,q_2), B_{\Lambda^{1/2} r}}+ \Lambda^{-1/2p}
 \sum_{k=1}^\infty 2 ^{-k/p} 
\vrac{\lapms{s_2}\abs{u}}_{(p_2^\ast,q_1), B_{2^k\Lambda^{1/2} r}} \ \vrac{\lapms{s_3}\abs{v}}_{(p_3^\ast,q_2), B_{2^k\Lambda^{1/2} r}}.
 \end{align*}
Applying yet again Lemma~\ref{lem:QuasiLocalityOfNorms}~(iii) to both factors in each sum with $\Lambda \hat = \sqrt{\Lambda}$
we arrive at
\begin{align*}
    \|G(u,v)\|_{(p, q), B_r} \aleq \|u\|_{(p_2,q_1),B_{\Lambda r}} \|v\|_{(p_3,q_2),B_{\Lambda r}} + \|u\|_{(p_2,q_1)}
    \Lambda^{-\theta}\sum_{k=2}^\infty 2^{-\theta k}\|v\|_{(p_3,q_2),B_{\Lambda 2^k r}}.
\end{align*}

As for the second estimate, we apply Lemma~\ref{lem:QuasiLocalityOfNorms}~(ii)
with $\Lambda \hat = \sqrt{\Lambda}$ to get
\begin{align*}
 \|G(u,v)\|_{(p,q), \mathbb R - B_{\Lambda r}} 
 &=\|\lapms{s_1}\left( \lapms{s_2} |u| \ \lapms{s_3} |v| \right)\|_{(p_1^\ast,q),\mathbb R - B_{\Lambda r}}
 \\
 &\aleq \Lambda^{-\theta_1} 
 \|\lapms{s_2} |u| \ \lapms{s_3} |v|\|_{(p_1, \infty), B_{\Lambda^{1/2} r}}
    + \|\lapms{s_2} |u| \ \lapms{s_3} |v|\|_{(p_1,q), \mathbb R -B_{\Lambda^{1/2}r}}
\end{align*}
where $\theta_{1} = \frac 1 2 - \fracm{2p_1}$.
We estimate the first term using H\"older's inequality and Sobolev imbedding theorem to get
\begin{equation*}
 \Lambda^{-\theta_1} 
 \|\lapms{s_2} |u| \ \lapms{s_3} |v|\|_{(p_1, \infty), B_{\Lambda^{1/2} r}}
 \leq \Lambda^{-\theta_1}\|u\|_{(p_2,\infty)} \|v\|_{(p_3,1)}.
\end{equation*}
For the second term we use H\"older's inequality and Sobolev's imbedding theorem to get
\begin{align*} 
 \|\lapms{s_2} |u| \ \lapms{s_3} |v|\|_{(p_1, q), \mathbb R - B_{\Lambda^{1/2} r}}
 &\aleq \|\lapms{s_2}\abs u\|_{(p^\ast_2,q_1), \mathbb R - B_{\Lambda^{1/2}r}} 
 \|\lapms{s_3}\abs v\|_{(p^\ast_3,q_2), \mathbb R - B_{\Lambda^{1/2}r}} \\
 &\aleq \|u\|_{(p_2,\infty), \mathbb R } 
 \|\lapms{s_3}|v|\|_{(p^\ast_3,q_{2}), \mathbb R - B_{\Lambda^{1/2}r}}.
\end{align*}
Since
\begin{align*}
 \|\lapms{s_3} |v|\|_{(p^\ast_3, q_2), \mathbb R - B_{\Lambda^{1/2} r}}
 & \aleq \Lambda^{-\theta_2} \|v\|_{(p_3,1);B_{\Lambda^{1/4}}} + \|v\|_{(p_3,q);\mathbb R - B_{\Lambda^{1/4}}}
\end{align*}
where $\theta_2 =  \fracm 4 - \fracm {4p_3}$ and,
using Lemma~\ref{lem:QuasiLocalityOfNorms}~(i)
\begin{equation*}
\|v\|_{(p_3,q);\mathbb R - B_{\Lambda^{1/4}}}
= \|\laps{t} (\lapms{t}v)\|_{(p_3,q);\mathbb R - B_{\Lambda^{1/4}}}
\leq \Lambda^{-\theta_2} \|I_t v\|_{(p^{\ast \ast}_3,1)} 
\leq \Lambda^{-\theta_2} \|I_t v\|_{(p_3,1)}
\end{equation*}
we deduce the statement for $\theta := \min\{\theta_1, \theta_2\}$.
\end{proof}

We then have the following
\begin{lemma}\label{lem:localizedHEst}
There is a $\varepsilon_{0} > 0$ such that
for $\delta,\eps\in[0,\varepsilon_{0})$,
for any $a,b\in\mathcal S(\R)$, $\Lambda>4$
\begin{align*}
    &\|H_{\frac{1}{2}}(a,b)\|_{(2, q), B_r} 
  \aleq \|\laps{1/2-\delta} a\|_{(\frac2{1-2\delta},q_1),B_{\Lambda r}} \|\laps{1/2-\eps} b\|_{(\frac2{1-2\eps},q_2),B_{\Lambda r}} + \\
  &\qquad\qquad\qquad\qquad{}+\|\laps{1/2-\delta} a\|_{(\frac2{1-2\delta},q_1)} \Lambda^{-\theta}
    \sum_{k=1}^\infty 2^{-\theta(k-1)}\|\laps{1/2-\eps} b\|_{(\frac2{1-2\eps},q_2),B_{2^k\Lambda r}}.
 \end{align*}
If $\supp b \subset B_r$ we furthermore get for any $k \geq 2$, $\Lambda > 16$, $s\in[0,\frac12)$
 \begin{equation*}
  \|H_{\fracm{2}}(a,b)\|_{(\frac2{1+2s},q),A^k_{\Lambda,r}} \aleq (\Lambda 2^k)^{-\theta} \vrac{\lapv a}_{(2,\infty),\R}\ \|\laps{1/2-\eps} b \|_{(\frac 2 {1-2\eps}, 1),\R}.
 \end{equation*}
 \end{lemma}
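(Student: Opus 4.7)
The starting point is the pointwise bound of Lemma~\ref{lem:EstimateForH12s}, applied with its parameter $s=0$ and with $\eps,\eps'$ renamed to $\delta,\eps$. This estimates $|H_{\fracm2}(a,b)|$ by a sum of four terms
\begin{align*}
 |H_{\fracm2}(a,b)|&\aleq\lapms{\fracm6}\br{\lapms{\fracm6-\delta}|\laps{\fracm2-\delta}a|\cdot\lapms{\fracm6-\eps}|\laps{\fracm2-\eps}b|}\\
 &\quad{}+\lapms{\fracm3-\delta}|\laps{\fracm2-\delta}a|\cdot\lapms{\fracm6-\eps}|\laps{\fracm2-\eps}b|\\
 &\quad{}+\lapms{\fracm6-\delta}|\laps{\fracm2-\delta}a|\cdot\lapms{\fracm3-\eps}|\laps{\fracm2-\eps}b|\\
 &\quad{}+\lapms{\fracm4-\delta}|\laps{\fracm2-\delta}a|\cdot\lapms{\fracm4-\eps}|\laps{\fracm2-\eps}b|.
\end{align*}
Each summand fits the pattern $G_j(u,v)$ of Lemma~\ref{lem:LowerOrderProducts.v2} with $u:=|\laps{\fracm2-\delta}a|$, $v:=|\laps{\fracm2-\eps}b|$ and Riesz-potential exponents satisfying $s_1^{(j)}+s_2^{(j)}+s_3^{(j)}=\fracm2-\delta-\eps$. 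For each of the four terms at least two of the $s_i^{(j)}$ are strictly positive provided $\delta,\eps<\eps_0$ for some $\eps_0>0$ (for the first term all three are positive; for the remaining three exactly $s_1=0$).

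For the first inequality I apply~\eqref{eq:Guvest} to each summand with $p_2=\frac{2}{1-2\delta}$, $p_3=\frac{2}{1-2\eps}$. The target exponent is $\fracm p=\fracm{p_2}+\fracm{p_3}-(s_1^{(j)}+s_2^{(j)}+s_3^{(j)})=\fracm2$, hence $p=2$; the constraints $p_2<1/s_2^{(j)}$ and $p_3<1/s_3^{(j)}$ reduce to elementary inequalities that hold for all four terms once $\eps_0$ is chosen small. The four resulting bounds, summed and reindexed as $k\mapsto k-1$ in the tail series, give precisely the first inequality.

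For the second inequality I apply~\eqref{eq:Ginversedssupport} termwise with $\delta=0$ (so $p_2=2$ and $u=|\lapv a|$), $q_1=\infty$, $q_2=1$. The support hypothesis $\supp b\subset B_r$ enters through the identity $b=\lapms{\fracm2-\eps}\laps{\fracm2-\eps}b$, which realises $v=|\laps{\fracm2-\eps}b|$ as a fractional derivative (of order $t:=\fracm2-\eps\in[0,\fracm2]$) of the compactly supported function $b$, exactly the situation addressed by~\eqref{eq:Ginversedssupport}. This yields, for each summand,
\[
 \|G_j(u,v)\|_{(2,q),A^k_{\Lambda,r}}\aleq\br{\Lambda 2^k}^{-\theta_0}\|\lapv a\|_{(2,\infty)}\|\laps{\fracm2-\eps}b\|_{(\frac{2}{1-2\eps},1)}
\]
for some $\theta_0>0$. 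To reach the required Lorentz exponent $\frac{2}{1+2s}$ I use H\"older on the finite-measure annulus,
\[
 \|f\|_{(\frac{2}{1+2s},q),A^k_{\Lambda,r}}\aleq |A^k_{\Lambda,r}|^{s}\|f\|_{(2,q),A^k_{\Lambda,r}}\aleq(2^k\Lambda r)^s\|f\|_{(2,q),A^k_{\Lambda,r}},
\]
which combined with the previous bound gives a factor $r^s(\Lambda 2^k)^{s-\theta_0}$, of the claimed form after setting $\theta:=\theta_0-s>0$ (shrinking $\eps_0$ if necessary) and absorbing the bounded factor $r^s$ into the implicit constant. Summation over the four summands completes the proof.

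The main obstacle is not any single step but the bookkeeping: one must verify simultaneously, for each of the four summands and for both~\eqref{eq:Guvest} and~\eqref{eq:Ginversedssupport}, that the parameters $p_2,p_3,s_i^{(j)}$ meet all the prerequisites of Lemma~\ref{lem:LowerOrderProducts.v2}. A secondary subtlety lies in the support hypothesis of~\eqref{eq:Ginversedssupport}, which is phrased as a condition on $v$ via $\laps{t}v$; here one appeals to the fact that the proof of~\eqref{eq:Ginversedssupport} only uses that $v$ is a fractional potential of some compactly supported function, which our $v=|\laps{\fracm2-\eps}b|$ does satisfy through the identity displayed above.
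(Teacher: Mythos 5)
Your proof follows the paper's own one-line argument: apply Lemma~\ref{lem:EstimateForH12s} with its parameter $s=0$ and feed the four resulting $G$-terms into Lemma~\ref{lem:LowerOrderProducts.v2} with exponent sum $\tfrac12-\delta-\eps$. The first estimate is handled correctly. For the second, two points need fixing. You should take $v:=\laps{1/2-\eps}b$ \emph{without} the outer absolute value: Lemma~\ref{lem:LowerOrderProducts.v2} already wraps $|u|,|v|$ inside its hypothesis and conclusion, so nothing is lost; and the support condition of \eqref{eq:Ginversedssupport} --- which, judging from its proof (Lemma~\ref{lem:QuasiLocalityOfNorms}~(i) is applied there to $f=\lapms{t}v$), is meant to read $\supp\lapms{t}v\subset\overline{B_r}$ rather than the printed $\supp\laps{t}v\subset\overline{B_r}$ --- then reduces exactly to $\supp b\subset B_r$, via $\lapms{1/2-\eps}\bigl(\laps{1/2-\eps}b\bigr)=b$. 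With your $v=|\laps{1/2-\eps}b|$ this fails, since $\lapms{t}|\laps{1/2-\eps}b|$ is neither $b$ nor compactly supported; and calling $v$ ``a fractional potential of a compactly supported function'' conflates fractional derivatives with Riesz potentials.

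Second, the H\"older step on $A^k_{\Lambda,r}$ produces the factor $(2^k\Lambda r)^s$, which the $(\Lambda 2^k)^{-\theta_0}$ from Lemma~\ref{lem:LowerOrderProducts.v2} can only absorb when $s<\theta_0$, a fixed small number determined by $p_2,p_3$ (certainly below $\tfrac14$). Writing that $\theta:=\theta_0-s>0$ ``after shrinking $\eps_0$ if necessary'' is a non sequitur: $\eps_0$ bounds $\delta$ and $\eps$ and has no influence on $s$. What you actually establish is the second estimate for $s$ in a neighbourhood of zero, not the stated range $s\in[0,\tfrac12)$. In the paper's single use of it (the proof of Lemma~\ref{lem:CriticalTerm}) the left-hand Lorentz exponent is $(2,1)$, i.e.\ $s=0$, and $r\lesssim1$ there makes $r^s$ absorbable, so the discrepancy is harmless in context --- but the proposal should acknowledge the restriction on $s$ rather than lean on $\eps_0$.
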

 
\begin{proof}
Immediately from Lemma~\ref{lem:EstimateForH12s}, Lemma~\ref{lem:LowerOrderProducts.v2} where $s = \fracm{2} - \varepsilon - \delta$. 
\end{proof}

We use the lemma above to estimate 
the normal part as stated in Lemma~\ref{la:tangentialpart}. 

\begin{proof}[Proof of Lemma \ref{la:tangentialpart}]
With~\eqref{eq:Dg2} we obtain $\norm{\laps{s}\sabs{g'}^2}_{(\frac2{1+2s},\infty),B_{r}}\aleq r^{\frac12+s}$.
In order to show~\eqref{eq:orthogpartest} by the decomposition~\eqref{eq:orthogpartHdec} it remains to treat the $H$-term.
We rewrite
\[
\begin{ma}
 \laps{s} H_{\frac{1}{2}}(a,b) 
&=& \underbrace{H_{s+\fracm{2}}(a,b)}_{=:I}
+ \underbrace{a\lapsv{1+2s}b - \laps{s}(a \lapv b)}_{=:II}
+ \underbrace{b\lapsv{1+2s}a - \laps{s}(b \lapv a)}_{=:III}.
\end{ma}
\]
We will show that all three terms satisfy the hypotheses of Lemma~\ref{lem:LowerOrderProducts.v2}.
Due to Lemma~\ref{lem:EstimateForH12s} this is true for the term $I$.

As for $II$ note that $II=0$ in case $s=0$. If $s\in(0,1)$,
the potential definition~\eqref{eq:fraclap-} of $\laps{s}$ gives
\begin{align*}
 {II(x)} &= c\br{a(x)\int \frac{\lapv b(y)-\lapv b(x)}{\abs{x-y}^{1+s}}\ \d y - \int \frac{a(y)\lapv b(y)-a(x)\lapv b(x)}{\abs{x-y}^{1+s}}\ \d y} \\
 &= c\int \frac{\br{a(x)-a(y)}\lapv b(y)}{\abs{x-y}^{1+s}}\ \d y.
\end{align*}
Using $a = \lapmv \lapv a$ we arrive at
\[
 \abs{II(x)} \aleq \int \frac{\abs{a(y)-a(x)}\abs{\lapv b(y)}}{\abs{x-y}^{1+s}}\ \d y \aleq \int \int \frac{\abs{\abs{y-z}^{-1+\frac{1}{2}}-\abs{x-z}^{-1+\frac{1}{2}}}\abs{\lapv b(y)} \abs{\lapv a(z)}}{\abs{x-y}^{1+s}} \ \d y\ \d z.
\]
For almost all $x,y,z$ we get from Lemma~\ref{lem:MultiplierEstimate} choosing $\delta:=\frac14+\frac s2$
\[
\frac{\abs{\abs{y-z}^{-1+\frac{1}{2}}-\abs{x-z}^{-1+\frac{1}{2}}}}{\abs{x-y}^{1+s}} \leq 
\brac{\abs{x-z}\abs{x-y}}^{-1+\fracm{4}-\frac{s}{2}}
+ \brac{\abs{y-z}\abs{x-y}}^{-1+\fracm{4}-\frac{s}{2}},
\]
which implies, using again~\eqref{eq:fraclap-}
\[
 \abs{II(x)} \aleq \lapms{\frac{1-2s}{4}} \abs{\lapv a}\ \lapms{\frac{1-2s}{4}} \abs{\lapv b} + \lapms{\frac{1-2s}{4}} \brac{\abs{\lapv b}\ \lapms{\frac{1-2s}{4}} \abs{\lapv a}}.
\]

By symmetry a respective estimate holds also for the term $III$.
Applying Lemma~\ref{lem:LowerOrderProducts.v2} one concludes.
\end{proof}

\begin{proof}[Proof of Lemma~\ref{lem:CriticalTerm}]
 Again the proof relies on quasi-locality. First we decompose
 \begin{align*}
  \left|\int \lapv  g_i' \omega_{ij} H_{1/2}(g'_j,\phi) \right|
  &\aleq  \left|\; \int_{B_{\Lambda^{1/2}r}}  \lapv  g_i' \omega_{ij} H_{1/2}(g'_j,\phi) \right|
  + \sum_{k=1}^\infty \left|\;
  \int_{A^k_{\Lambda^{1/2}, r}} 
  \lapv  g_i' \omega_{ij} H_{1/2}(g'_j,\phi) \right|\\
  &\aleq \|\lapv g'\|_{(2,\infty),B_{\Lambda^{1/2}r}} 
  \|H_{1/2}(g'_j,\phi)\|_{(2,1), B_{\Lambda^{1/2}r}} \\ 
  & \quad +
  \sum_{k=1}^\infty \|\lapv  g'\|_{(2,\infty),A_{\Lambda^{1/2},r}^{k}}
  \|H_{1/2}(g'_j,\phi)\|_{(2,1),A^k_{\Lambda^{1/2},r}}   .
 \end{align*}
  Using Lemma~\ref{lem:localizedHEst},
  the first summand can be estimated by 
 \begin{align*}
 \vrac{\lapms{s}\lapv \phi}_{(\frac{2}{1-2s},1)} \ \brac{
  \|\lapv g'\|_{(2,\infty),B_{\Lambda r}}^2 +  \|\lapv g'\|_{(2,\infty),B_{\Lambda r}}\ \Lambda^{-\theta} \sum_{l=1}^\infty 2^{-\theta l} \|\lapv  g'\|_{(2,\infty),B_{\Lambda 2^lr}}
  }.
 \end{align*}
 Applying the second part of Lemma~\ref{lem:localizedHEst}, the infinite sum can be estimated by
 \[ \Lambda^{-\theta}\| \lapms{s}  \lapv \phi\|_{(\frac 2 {1-2 s},1)}\ \|\lapv g'\|_{(2,\infty)} \ \sum_{k=2}^\infty 
  2^{-\theta k} \|\lapv  g'\|_{(2,\infty),B_{2^k\Lambda r}}.\]
\end{proof}

%
%
%
%

\subsection{Iteration lemma}

In order to prove Dirichlet growth, we need an iteration lemma
whose proof is based on the technique presented in~\cite[\href{http://www.math.ucla.edu/~tao/254a.1.01w/notes4.dvi}{notes4}, p.~11]{TaoLectures}.
The statement is also similar to the corresponding one appearing in~\cite{DR11Sphere}.
One should see this as a generalized version of De Giorgi's Iteration Lemma, cf., e.g., \cite{GiaquintaMI}.

\begin{lemma}[Iteration lemma] \label{lem:IterationLemma}
 Let $C<\infty$ and $\theta >0 $ be given.

 If $b_k\geq 0$, $k \in \mathbb N_0$, satisfy
 \begin{equation} \label{eq:Iteration}
  b_{k+m} \leq \varepsilon b_{k} +
    C\left(2^{-\theta (k+m)} + 2^{-\theta m}\sum_{l=1}^k 2^{-\theta l} b_{k-l}\right)  
 \end{equation}
 for all $k \in \mathbb N_0$ and $\varepsilon>0$ is small enough, $m$ is 
 big enough, then
 \begin{equation*}
  b_k \aleq 2^{-\tilde \theta k} 
 \end{equation*}
 for all $k\in \mathbb N_0$ with $\tilde \theta = \theta /2$.
\end{lemma}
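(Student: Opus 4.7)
The plan is to establish the stronger pointwise bound $b_k \le M\,2^{-\tilde{\theta}k}$ for all $k\in\N_0$ by strong induction, where $\tilde{\theta} := \theta/2$ and $M<\infty$ will be fixed at the end. Since the inequality~\eqref{eq:Iteration} only controls $b_{k+m}$ in terms of $b_0,\ldots,b_k$, the induction has to be initialized on the first $m$ indices, which requires an a~priori uniform bound $b_k \le B$; this is automatic in the applications of the lemma since the $b_k$ arise as norms on the nested balls $B_{2^{-k}}$ and are hence dominated by $b_0$.

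For the base cases $k=0,1,\dots,m-1$ the bound $b_k \le M\,2^{-\tilde{\theta}k}$ is obtained by simply demanding $M \ge B\cdot 2^{\tilde{\theta}m}$. For the inductive step I would take $k\ge m$, set $k':=k-m\ge 0$, insert the inductive hypothesis $b_j\le M\,2^{-\tilde{\theta}j}$ (for $j<k$) into the right-hand side of~\eqref{eq:Iteration}, and use $\theta-\tilde{\theta}=\tilde{\theta}$ to estimate the geometric sum:
\begin{equation*}
 \sum_{l=1}^{k'} 2^{-\theta l}\, 2^{-\tilde{\theta}(k'-l)} \;=\; 2^{-\tilde{\theta}k'}\sum_{l=1}^{k'} 2^{-\tilde{\theta}l} \;\le\; C_\theta\,2^{-\tilde{\theta}k'}.
\end{equation*}
Together with $2^{-\theta k}\le 2^{-\tilde{\theta}m}\cdot 2^{-\tilde{\theta}k}$ (valid because $k\ge m$) and noticing that $2^{-\tilde\theta k'} = 2^{\tilde\theta m}\cdot 2^{-\tilde\theta k}$, factoring out the common decay $2^{-\tilde{\theta}k}$ then yields
\begin{equation*}
 b_k \;\le\; 2^{-\tilde{\theta}k}\,\br{\varepsilon\, M\, 2^{\tilde{\theta}m} \;+\; C\, 2^{-\tilde{\theta}m} \;+\; C\,C_\theta\, M\, 2^{-\tilde{\theta}m}}.
\end{equation*}

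To close the induction it remains to arrange that the bracketed quantity is at most $M$. The crucial point is the order in which the three parameters get fixed: first choose $m$ so large that $C\,C_\theta\, 2^{-\tilde{\theta}m} \le \tfrac13$; then set $M := \max\br{B\cdot 2^{\tilde{\theta}m},\,3C\cdot 2^{-\tilde{\theta}m}}$; finally pick $\varepsilon$ so small, depending on $m$ and $M$, that $\varepsilon\, 2^{\tilde{\theta}m} \le \tfrac13$. With these choices each summand in the bracket is $\le M/3$, yielding $b_k \le M\,2^{-\tilde{\theta}k}$, as desired.

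The main obstacle -- really a bookkeeping subtlety rather than a genuine analytic difficulty -- is precisely this ordering of parameters: the admissibility condition ``$\varepsilon$ small enough \emph{and} $m$ big enough'' has to be read as allowing $m$ to be tuned first, with $\varepsilon$ then chosen as a function of $m$; otherwise the closing inequality is circular. Apart from this mild point, the argument is a routine geometric-series strong induction in the spirit of De~Giorgi iteration.
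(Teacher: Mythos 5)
Your proof is correct, but it takes a genuinely different route from the paper's. The paper multiplies the recursion by $2^{\tilde\theta k}$, sums over $k$, and reabsorbs the weighted series $\sum_k 2^{\tilde\theta k} b_k$ into the left-hand side, using the smallness of $\varepsilon$ and of $2^{-m(\theta-\tilde\theta)}$; to sidestep the question of whether that series converges a priori, it first applies the argument to the truncated sequence $\tilde b_k := b_k\,\chi_{\{k\le N\}}$ (which satisfies the same recursion) to get an $N$-independent partial-sum bound, then lets $N\to\infty$ and reads off the decay from the fact that the terms of a convergent series vanish. You instead run a direct strong induction on the pointwise bound $b_k\le M\,2^{-\tilde\theta k}$. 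Both work, and both implicit constants depend on the first $m$ (resp.\ $m+1$) values of $b$. Your route is more elementary and makes the final decay rate visible at once, at the cost of having to track the constant $M$ through the induction; the paper's route is slicker in that one never names $M$, at the cost of the truncation detour.

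Two small remarks. First, the ``a priori uniform bound $b_k\le B$'' you invoke for the base cases is more than you actually need: the induction only requires $M\ge\max_{0\le k<m}b_k\,2^{\tilde\theta k}$, which is finite simply because $b_0,\dots,b_{m-1}$ are finitely many nonnegative reals supplied by the hypothesis, with no appeal to the geometric origin of the $b_k$. (The paper's constant has exactly the same dependence on $b_0,\dots,b_m$.) Second, in your final parameter selection the condition $\varepsilon\,2^{\tilde\theta m}\le\tfrac13$ in fact depends only on $m$, not on $M$; your stated ordering $m\to M\to\varepsilon$ remains valid, but $\varepsilon$ could just as well be fixed together with $M$.
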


\begin{proof}
 We will prove that
 \begin{equation} \label{eq:sumConverges}
   \sum_{k=0}^\infty 2^{\tilde \theta k} b_k \leq C\sum_{l=0}^m 2^{\tilde \theta l} b_l+ C.
 \end{equation}
 Especially, the infinite sum converges and hence the summands
 are a null series, which proves the lemma.

  Multiplying Equation~\eqref{eq:Iteration} with $2^{\tilde \theta k}$,
  and summing over $k$ we get 
 \begin{align*}
    2^{-\tilde \theta m} \sum_{k=0}^\infty {2}^{\tilde \theta (k+m)} b_{k+m}
     &\leq \varepsilon \sum_{k=0}^\infty 2^{\tilde \theta k} b_{k} 
     + C \sum_{k=0}^\infty 2^{-\theta (k+m)} 2^{\tilde \theta k} 
     + C 2^{-\theta m}\sum_{k=0}^\infty \sum_{l=1}^k 2^{-\theta l} 2^{\tilde \theta k} b_{k-l}\\
     &\leq \varepsilon \sum_{k=0}^\infty  2^{\tilde \theta k} b_{k}
     + C 2^{-\theta m} \sum_{k'=0}^\infty \sum_{l=1}^\infty 2^{-\theta l} 2^{\tilde \theta (k'+l)} b_{k'} 
     + C 2^{-\theta m} \\
     &\leq \varepsilon \sum_{k=0}^\infty 2^{\tilde \theta k} b_{k}
     + \tilde C 2^{-\theta m} \sum_{k'=0}^\infty 2^{\tilde \theta k'} b_{k'}
     + C 2^{-\theta m}.
 \end{align*}
  Assuming that $m$ is so large that
  $\tilde C 2^{-m(\theta  -\tilde \theta)} = \tilde C 2^{-\frac{m\theta}2} < 1/4$ and $0<\varepsilon<\frac 1 {4} 2^{-\theta m}$
  we get
 \begin{align*}
   \sum_{k=0}^\infty 2^{\tilde \theta k} b_k 
  \leq \frac 1 2 \sum_{k=0}^\infty 2^{\tilde \theta k} b_k  + \sum_{l=0}^m  2^{\tilde \theta l} b_l+ C
 \end{align*}
 and hence
 \begin{equation*}
  \sum_{k=0}^\infty 2^{\tilde \theta k} b_k \leq C \sum_{l=0}^m 2^{\tilde \theta l}b_l + C
 \end{equation*}
 if the infinite series converges. 
 If the sum is not known to converge, we apply the above argument to the cut-off series
 \begin{equation*}
  \begin{cases}
   \tilde b_k =b_k &\text{ if } k \le N, \\
   \tilde b_k = 0 &\text{ else}
  \end{cases}
 \end{equation*}
 and get the uniform bound
 $
  \sum_{k=0}^N  2^{\tilde \theta k} b_k \aleq \sum_{l=0}^m 2^{\tilde\theta l} b_l+ 1.
 $
  Letting $N \rightarrow \infty$ we get \eqref{eq:sumConverges}.
\end{proof}

\end{appendix}

\nocite{GrafakosC,GrafakosM}

\end{document}